\documentclass{article}
\usepackage{graphicx} 
\usepackage{notations}
\usepackage{float}
\usepackage{comment}
\usepackage[acronyms]{glossaries}
\usepackage{subcaption} 
\usepackage{amsmath,amssymb,amsthm}
\usepackage[a4paper, margin=2.5cm]{geometry} 
\DeclareUnicodeCharacter{2009}{\,} 
\usepackage[dvipsnames]{xcolor} 
\usepackage[colorlinks, linkcolor={blue}, citecolor={red}]{hyperref}
\usepackage{tikz} 
\usetikzlibrary{positioning, quotes} 

\usepackage{bm,dsfont} 
\newcommand{\N}{\mathbb N}
\newcommand{\Z}{\mathbb Z}
\newcommand{\R}{\mathbb R}
\newcommand{\C}{\mathbb C}
\newcommand{\tr}{{\rm Tr}}
\newcommand{\dps}{\displaystyle }

\title{A mathematical analysis of the discretized IPT-DMFT equations}
\author{E. Canc\`es, A. Kirsch and S. Perrin--Roussel}
\makeglossaries 

\newacronym{dmft}{DMFT}{Dynamical Mean-Field Theory}
\newacronym{ipt}{IPT}{Iterated Perturbation Theory}
\newacronym{aim}{AIM}{Anderson Impurity Model}
\newacronym{gft}{GFT}{Generalized Fourier Transform}
\newacronym{car}{CAR}{Canonical Anti-commutation Relations}
\newacronym{gicar}{GICAR}{Gauge Invariant Canonical Anti-commutation Relations}
\newacronym{rhs}{r.h.s.}{right hand side}
\newacronym{lhs}{l.h.s.}{left hand side}
\newacronym{tddft}{TDDFT}{Time-Dependent Density Functionnal Theory}
\newacronym{kms}{KMS}{Kubo-Martin-Schwinger}
\newacronym{acp}{ACP}{Analytical Continuation Problem}
\newacronym{ba}{BU}{Bath Update}

\newacronym{ctqmc}{CTQMC}{Continuous Time Quantum Monte Carlo}

\newacronym{dmet}{DMET}{Density Matrix Embedding Theory}

\graphicspath{{figures}}

\begin{document}

\maketitle

\begin{abstract}
In a previous contribution (E. Canc\`es, A. Kirsch and S. Perrin--Roussel, arXiv:2406.03384), we have proven the existence of a solution to the Dynamical Mean-Field Theory (DMFT) equations under the Iterated Perturbation Theory (IPT-DMFT) approximation.
In view of numerical simulations, these equations need to be discretized. In this article, we are interested in a discretization of the \acrshort{ipt}-\acrshort{dmft} functional equations, based on the restriction of the hybridization function and local self-energy to a finite number of points in the upper half-plane $\left(i\MatsubaraFrequency_n\right)_{n \in \IntSubSet{0}{\MatsubaraFrequencyCutoff}}$, where $\MatsubaraFrequency_n=(2n+1)\pi / \StatisticalTemperature$ is the $n$-th Matsubara frequency and $\MatsubaraFrequencyCutoff \in \Integers$.
We first prove the existence of solutions to the discretized equations in some parameter range depending on $\MatsubaraFrequencyCutoff$.
We then prove uniqueness for a smaller range of parameters. We also study more in depth the case of bipartite systems exhibiting particle-hole symmetry. In this case, the discretized IPT-DMFT equations have purely imaginary solutions, which can be obtained by solving a real algebraic system of $(\MatsubaraFrequencyCutoff+1)$ equations with  $(\MatsubaraFrequencyCutoff+1)$ variables. We provide a complete characterization of the solutions for $\MatsubaraFrequencyCutoff=0$ and some results for  $\MatsubaraFrequencyCutoff=1$ in the simple case of the Hubbard dimer. We finally present some numerical simulations on the Hubbard dimer. First, using the \texttt{TRIQS} Python library \cite{parcollet_triqs_2015}, we describe how a conductor-to-insulator transition occurs in the Matsubara frequency discretized IPT-DMFT model in the small temperature regime. Finally, using a homemade Julia code allowing arbitrary finite-arithmetic precision, we show that for some parameters $(U,T)$ of the considered Hubbard model, this discretization method may provide values of the Green's function at the lowest Matsubara frequencies which cannot be interpolated by a Pick function.
\end{abstract}

\tableofcontents

\section{Introduction}\label{sec:introduction}

In the realm of condensed matter physics, the \acrfull{dmft} \cite{georges_hubbard_1992,georges_dynamical_1996,metzner_correlated_1989,zhang_mott_1993} is an approximate method designed to tackle the quantum many-body problem associated with complex materials, in which interaction-driven effects can arise such as high critical temperature superconductivity or magnetism.
This method was first applied to the celebrated Hubbard model \cite{hubbard_electron_1963,kanamori_electron_1963,gutzwiller_effect_1963,pariser_semiempirical_1953,pople_electron_1953} (for which much remains unknown apart from specific geometries \cite{lieb_hubbard_2004}), and then  to periodic materials, in combination with \emph{first-principle} methods such as Density Functional Theory (leading to the DFT+DMFT method \cite{held_mott-hubbard_2001}) or the GW approximation (leading to the GW+DMFT method \cite{sun_extended_2002}).

The purpose of DMFT is to provide an approximation of the \emph{one-body Green's function}. The reader unfamiliar with this formalism is referred to e.g. \cite{cances_mathematical_2016,cances_mathematical_2024} for a mathematically-oriented presentation of Green's function methods in the context of electronic structure calculation in general and DMFT in particular.

More precisely, the success of the \acrfull{dmft} in the strongly-correlated condensed-matter community lies in its ability to provide good approximations of the local Green's function and local self-energy of very large systems \cite{georges_dynamical_1996}.

The computational bottleneck of  a \acrshort{dmft} calculation is the \emph{impurity solver}. To fix the ideas, consider a Hubbard model with $L$ sites. At each macro-iteration of the DMFT iteration loop, $L$ auxiliary Anderson Impurity Models (AIMs) are constructed and have to be solved using an impurity solver. Most of practical implementations of \acrshort{dmft} are concerned with the \emph{translation-invariant} setting, for which the $L$ AIMs are in fact identical. Still, the impurity solver is the practical bottleneck (even ground-state computations are known to be very expensive \cite{bravyi_complexity_2017}): accurate computations can be performed using e.g. \acrfull{ctqmc} solvers \cite{gull_continuous-time_2011,rubtsov_continuous-time_2005,werner_continuous-time_2006,werner_hybridization_2006,li_interaction-expansion_2022,seth_triqscthyb_2016}, which are supposedly exact up to statistical noise.
Nevertheless, their computational cost can be too high for specific applications, such as moir\'e heterostructures \cite{tan_doping_2023}, so that computations in this setting must be performed using \emph{approximate solvers}. One of the computationally cheapest (and least accurate) solvers is the Iterated Perturbative Theory (IPT) solver~\cite{georges_hubbard_1992,zhang_mott_1993,georges_dynamical_1996}.
We have studied its analytical properties in \cite{cances_mathematical_2024}, in the \emph{single-site translation-invarariant paramagnetic} setting, to which we will also stick in this article. In this setting, the  \acrshort{ipt}-\acrshort{dmft} equations read
\begin{align}
\forall z \in \UpperHalfPlane, \quad \Hybridization(z)&=W^T\left(z-\NIHamiltonian_\perp -\SelfEnergy(z)\right)^{-1}W, \label{eq:BathUpdateUpperHalfPlane} \\
\SelfEnergy&=U^2 \IPTmap_\StatisticalTemperature(\Hybridization). \label{eq:IPTUpperHalfPlane}
\end{align}
The unknown are the {\em hybridization function} $\Hybridization : \UpperHalfPlane \to -\overline{\UpperHalfPlane}$ and local self-energy $\SelfEnergy : \UpperHalfPlane \to -\overline{\UpperHalfPlane}$, where $\UpperHalfPlane:=\{z \in \C, \; \Im(z) > 0\}$ is the (open) complex upper half-plane, and $\overline{\UpperHalfPlane}=\{z \in \C, \; \Im(z) \ge 0\}$ its closure. The parameters of the model are the graph $\HubbardGraph=(\HubbardVertices,\HubbardEdges)$ of the translation-invariant Hubbard model ($\HubbardVertices$ is the set of sites, identified with the vertices of the graph, $\HubbardEdges$ the set of edges connecting sites between which hoping is allowed, and $L=\Cardinal{\HubbardVertices}$ is the number of sites), the hopping parameter $\HoppingMatrix[] \in \RealNumbers$, the on-site repulsion $\OnSiteRepulsion[] \in \RealNumbers$, and the inverse temperature $\StatisticalTemperature$. In the language of graph theory, the translation-invariance of the Hubbard model means that the graph $\HubbardGraph$ is vertex transitive.

 The vector  
$W \in \RealNumbers^{L-1}$ and the real symmetric matrix $\NIHamiltonian_\perp \in \R^{(L-1) \times (L-1)}_{\rm sym}$ are obtained from  $\HubbardGraph$ and $T$ through the relation 
\begin{equation}\label{eq:DefinitionBUMatrices}
\HoppingMatrix[] \; \AdjacencyMatrix= \left(\begin{matrix}
0 & W^T \\
W & \NIHamiltonian_\perp
\end{matrix}\right), \nonumber
\end{equation}
where $\AdjacencyMatrix$ denotes the adjacency matrix of the (translation invariant) Hubbard graph $\HubbardGraph$. Lastly, $F_\StatisticalTemperature^{\rm IPT}$ is the IPT solver for inverse temperature $\StatisticalTemperature$. Its definition will be recalled in the next section. In \cite{cances_mathematical_2024}, we proved that Eqs~\eqref{eq:BathUpdateUpperHalfPlane}--\eqref{eq:IPTUpperHalfPlane} have a solution $(\Hybridization,\SelfEnergy)$ for any set of parameters $\HubbardGraph$, $T$, $U$, $\StatisticalTemperature$.

\medskip

A key ingredient in both the mathematical analysis and the numerical resolution of the IPT-DMFT equations is that both $\Hybridization$ and $\SelfEnergy$ are analytic in $\UpperHalfPlane$. It follows that $\Hybridization$ and $\SelfEnergy$ are negatives of Pick functions (a Pick function is an analytic function $f:\UpperHalfPlane \to \ComplexNumbers$ such that $\Im (f(z)) \ge 0$ for all $z \in \UpperHalfPlane$). A popular numerical method consists in seeking approximations $\HybridizationVector=\left(\Hybridization_n\right)_{n \in \IntSubSet{0}{\MatsubaraFrequencyCutoff}}\in -\overline{\UpperHalfPlane}^{\MatsubaraFrequencyCutoff+1}$ and $\SelfEnergyVector=\left(\SelfEnergy_n\right)_{n \in \IntSubSet{0}{\MatsubaraFrequencyCutoff}}\in -\overline{\UpperHalfPlane}^{\MatsubaraFrequencyCutoff+1}$ of the values $\left(\Hybridization(i\omega_n)\right)_{n \in \IntSubSet{0}{\MatsubaraFrequencyCutoff}}$ and $\left(\SelfEnergy(i\omega_n)\right)_{n \in \IntSubSet{0}{\MatsubaraFrequencyCutoff}}$ of the hybridization function and local self-energy at the lowest $(\MatsubaraFrequencyCutoff+1)$ imaginary Matsubara frequencies of the upper half-plane. Recall that the Matsubara frequencies are defined by the formula $\MatsubaraFrequency_n=(2n+1)\frac{\pi}{\StatisticalTemperature}$, $n \in \Z$, and depend on the temperature, even if this dependency is omitted in the notation. The unknown of the Matsubara frequencies (MaF) discretization of the IPT-DMFT equations is then the pair $(\HybridizationVector,\SelfEnergyVector)\in\LowerHalfPlaneVectors\times  \LowerHalfPlaneVectors$.
As Equation~\eqref{eq:BathUpdateUpperHalfPlane} is local, its discretized counterpart is simply 
\begin{equation}\label{eq:discretized_Eq1}
\forall n \in \IntSubSet{0}{\MatsubaraFrequencyCutoff}, \quad \Hybridization_n=W^T\left(z-\NIHamiltonian_\perp -\SelfEnergy_n\right)^{-1}W.
\end{equation}
The discretized counterpart 
\begin{equation}\label{eq:discretized_Eq2}
\SelfEnergyVector =U^2 \IPTmapDiscretized(\HybridizationVector)
\end{equation}
of the nonlocal equation~\eqref{eq:IPTUpperHalfPlane} will be discussed below. Let us only mention at this point that it is a rational function in the $\Hybridization_n$'s and $\overline{\Hybridization_n}$'s so that \eqref{eq:discretized_Eq1}--\eqref{eq:discretized_Eq2} can be reformulated as a set of algebraic equations in the variables $(\Re(\Hybridization_n),\Im(\Hybridization_n))_{0 \le n \le \MatsubaraFrequencyCutoff}$.

\medskip

In this article, we study the set of discretized equations \eqref{eq:discretized_Eq1}--\eqref{eq:discretized_Eq2} from a theoretical and numerical viewpoint. In Section~\ref{sec:MFDIPTDMFTIntroduction}, we recall the physical meaning and structure of the continuous IPT-DMFT equations \eqref{eq:BathUpdateUpperHalfPlane}--\eqref{eq:IPTUpperHalfPlane}  and of the Matsubara frequency discretization of these equations. We also introduce a dimensionless version of the discretized equations \eqref{eq:discretized_Eq1}--\eqref{eq:discretized_Eq2} suitable for mathematical analysis. In Section~\ref{sec:MainResults}, we present our main theoretical results. The first one is concerned with the existence of physically admissible solutions, that are solutions with nonpositive imaginary parts. While the original IPT functional $\IPTmap_{\StatisticalTemperature}$ maps the set of negative of Pick functions into itself, its discretizated version $\IPTmap_{\StatisticalTemperature,\MatsubaraFrequencyCutoff}$ does not map $\LowerHalfPlaneVectors$ into itself.
Because of this possible limitation of the MaF discretization scheme of \eqref{eq:BathUpdateUpperHalfPlane}--\eqref{eq:IPTUpperHalfPlane}, we were only able to prove the existence of physically admissible solutions to \eqref{eq:discretized_Eq1}--\eqref{eq:discretized_Eq2} in some parameter range, when $\StatisticalTemperature T\sqrt{{\rm deg}(\HubbardGraph)}$ is smaller than a critical value depending on $\MatsubaraFrequencyCutoff$. Recall that ${\rm deg}(\HubbardGraph) \in \N$ denotes the degree of the vertex-transitive Hubbard graph $\HubbardGraph$. On the other hand, we are able to prove a uniqueness result in some parameter range, when $\StatisticalTemperature^2 TU\sqrt{{\rm deg}(\HubbardGraph)}$ is smaller than a critical value depending on $\MatsubaraFrequencyCutoff$. In Section~\ref{sec:BipartiteSystems}, we focus on the special case of bipartite systems at half filling. Such systems exhibit particle-hole symmetry, which allows one to seek solutions on the negative imaginary axis. Equations \eqref{eq:discretized_Eq1}--\eqref{eq:discretized_Eq2} can then be reformulated as a problem of real algebraic geometry: seek the solutions in $(0,1]^{\MatsubaraFrequencyCutoff+1}$ of a set of $(\MatsubaraFrequencyCutoff+1)$ real, sparse, polynomial equations in $(\MatsubaraFrequencyCutoff+1)$ real variables with degree $(6M+4)$, where $M$ is an integer depending on the adjacency matrix of the Hubbard graph. As a matter of illustration, we consider the Hubbard dimer (for which $L=2$ and $M=0$), and study the corresponding algebraic equations for $\MatsubaraFrequencyCutoff=0$ and $\MatsubaraFrequencyCutoff=1$. In Section~\ref{sec:IterativeSchemeMottTransiton}, we provide some numerical simulation results for the Hubbard dimer for larger values of $\MatsubaraFrequencyCutoff$. The purpose of this section is by no means to obtain new physical insights on the Hubbard model, but to illustrate on a simple example the properties of the MaF discretization method and the associated DMFT iteration scheme (convergence rate, multiple solutions, etc.). Moreover, it is highlited in Section \ref{sec:PickCriterion} that for some parameter range $(T,U)$ of the Hubbard model, the numerically obtained values of the Green's function at the lowest $(\MatsubaraFrequencyCutoff+1)$ positive Matsubara frequencies do not satisfy the Pick criterion, which means that they cannot be exactly interpolated by a Pick function.

\section{Matsubara frequency discretization of IPT-DMFT equations} \label{sec:MFDIPTDMFTIntroduction}

\subsection{Derivation of the IPT-DMFT equations}

To make this article as self-contained as possible, let us start with a concise mathematical introduction to DMFT and the IPT approximation, and recall the derivation of Equations~\eqref{eq:BathUpdateUpperHalfPlane}-\eqref{eq:IPTUpperHalfPlane}. If $\mathcal X$ and $\mathcal Y$ are Hilbert spaces, we denote by $\mathcal L(\mathcal X;\mathcal Y)$ the space of bounded linear operators from $\mathcal X$ to $\mathcal Y$, by $\mathcal A(\mathcal X;\mathcal Y)$ the space of bounded antilinear operators from $\mathcal X$ to $\mathcal Y$, and we use the usual shorthand notation $\mathcal L(\mathcal X):=\mathcal L(\mathcal X;\mathcal X)$. Lastly, we denote by $\mathcal S(\mathcal X)$ the space of self-adjoint operators on $\mathcal X$.

\medskip

Consider an isolated fermionic quantum system with one-particle state space $\OneParticleSpace$, which we assume finite-dimensional for simplicity. We denote respectively by 
$$
{\rm Fock}(\OneParticleSpace):=\dps \bigoplus_{N=0}^{\dim(\OneParticleSpace)} \bigwedge^N \OneParticleSpace, \qquad \widehat a_\bullet^\dagger \in \mathcal L(\OneParticleSpace;\mathcal L({\rm Fock}(\OneParticleSpace))), \qquad \widehat a_\bullet \in \mathcal A(\OneParticleSpace;\mathcal L({\rm Fock}(\OneParticleSpace))), \qquad \widehat H \in \mathcal S({\rm Fock}(\OneParticleSpace)), 
$$
 the Fock space, the creation and annihilation operators in the Schr\"odinger picture, and the Hamiltonian of the system in second quantization formalism. Recall that the creation and annihilation operators  satisfy the \acrfull{car}:
$$
\forall \phi,\phi' \in \OneParticleSpace, \quad \{\widehat a_\phi^\dagger,\widehat a_{\phi'}^\dagger\} = 0, \quad   \{\widehat a_\phi,\widehat a_{\phi'}\} = 0, \quad  \{\widehat a_\phi,\widehat a_{\phi'}^\dagger\} =\langle \phi,\phi'\rangle_{\OneParticleSpace}.
$$
We assume here that $\widehat H$ is particle-number conserving. 

\medskip

The time-ordered one-body Green's function associated to a quantum system with one-particle state space $\OneParticleSpace$ (assume $\dim(\OneParticleSpace) < \infty$ for simplicity) and an equilibrium state $\widehat \Gamma$ is the bounded function $G^{\rm TO} \in L^\infty(\R_t;\mathcal L(\OneParticleSpace))$ of the time variable $t$ with values in ${\mathcal L}(\OneParticleSpace)$ defined by
$$
\forall \phi,\phi' \in \OneParticleSpace, \quad \langle \phi, iG^{\rm TO}(t)\phi'\rangle_{\OneParticleSpace}:=\tr\left(\widehat \Gamma \; \mathcal T\left( \widehat a_\phi, \widehat a^\dagger_{\phi'}\right)(t,0)  \right),
$$
where $\mathcal T$ is the time-ordering operator and $\widehat a_\bullet(\cdot)^\dagger$, $\widehat a_\bullet(\cdot)$ the creation/annihilation operators in Heisenberg picture, that is 
$$
\langle \phi, iG^{\rm TO}(t)\phi'\rangle_{\OneParticleSpace}=\left| \begin{array}{ll} \tr\left(\widehat \Gamma \;  \widehat a_\phi(t) \widehat a^\dagger_{\phi'}(0) \right), &  \mbox{ if } t \geq 0, \\ - \tr\left(\widehat \Gamma \; \widehat a^\dagger_{\phi'}(0)  \widehat a_\phi(t) \right), &  \mbox{ if } t < 0, \end{array} \right. \quad \mbox{with} \quad \widehat a_\phi(t) = e^{it\widehat H} \widehat a_\phi e^{-it\widehat H}, \quad  \widehat a^\dagger_{\phi'}(0) = \widehat a^\dagger_{\phi'}.
$$
By definition, an equilibrium state is a steady state of the quantum Liouville equation $i\partial_t\widehat \Gamma(t) = [\widehat H,\widehat \Gamma(t)]$, that is an operator $\widehat \Gamma \in \mathcal L({\rm Fock}(\OneParticleSpace))$ satisfying the mixed-state representability conditions $\widehat \Gamma^\dagger=\widehat \Gamma$, $\widehat \Gamma  \ge 0$, and $\tr(\widehat \Gamma) =1$, and commuting with the Hamiltonian, i.e. such that $[\widehat H,\widehat \Gamma]=0$. In practice, $\widehat\Gamma$ is  often an $N$-particle ground state (microcanonical equilibrium state), a Gibbs state in the $N$-particle sector of the Fock space (canonical equilibrium state), or a Gibbs state for the modified Hamiltonian $\mathcal K = \widehat H-\ChemicalPotential \widehat N$ (grand canonical equilibrium state), where $\ChemicalPotential$ is a chemical potential and $\widehat N$ the number operator. Note that in all these cases, $\widehat \Gamma$ commutes with $\widehat N$, an assumption we make from now on. Since $\widehat H$, $\widehat \Gamma$ and $\widehat N$ pairwise commute (recall that $\widehat H$ is assumed to be particle-number conserving), they can be diagonalized in the same orthonormal basis $\{ \Psi^N_n \ ; \ 0 \le N \le \dim(\OneParticleSpace), \, 1 \le n \le \tiny{\begin{pmatrix}\dim(\OneParticleSpace)\\ N\end{pmatrix}}\}$ of the Fock space:
$$
\widehat N \Psi^N_n = N \Psi^N_n, \quad \widehat H \Psi^N_n = E^N_n \Psi^N_n, \quad \widehat \Gamma \Psi^N_n = f^N_n \Psi^N_n,
$$
where $E^N_n \in \R$ is the energy of the state $\Psi^N_n \in \dps\bigwedge^N \OneParticleSpace \subset {\rm Fock}(\OneParticleSpace)$ and $0 \le f^N_n \le 1$ its occupation number in the equilibrium state $\widehat \Gamma$. With this notation $\Psi^0_0$ is the vacuum of the Fock space ${\rm Fock}(\OneParticleSpace)$.

\medskip

In many applications, it is more relevant from a physical point of view to work in the frequency domain and focus instead on the time Fourier transform $G^{\rm TO}_{\rm freq} \in {\mathcal S}'(\R_\omega;\mathcal L(\OneParticleSpace))$ of $G^{\rm TO}$. However, this object has singularities (it is in general an order-$1$ tempered distribution) and it is more convenient to consider the Generalized Fourier Transform $G: \UpperHalfPlane \to \mathcal L(\OneParticleSpace)$ of $G^{\rm TO}$  in the upper half-plane. It indeed turns out that the function $G$ has very interesting mathematical properties. In particular, it is the negative of an operator-valued Pick function \cite{nevanlinna_uber_1919,pick_uber_1915} which means that it is analytic on $\UpperHalfPlane$  and satisfies 
$\Im(-G(z)):=-\frac{G(z)-G(z)^\dagger}{2i} \ge 0$ for all $z \in \UpperHalfPlane$. It readily follows from basic properties of analytic functions that for any non-constant Pick function $f$, $\Im(f(z))>0$ for all $z \in \UpperHalfPlane$. This implies that for all $z \in \UpperHalfPlane$, $\Im(-G(z))$ is in fact positive definite, so that $G(z)$ is invertible. 

\medskip

In addition, $G(z)$ vanishes when $\Im(z) \to +\infty$, so that it follows from the Nevanlinna-Riesz representation theorem \cite{gesztesy_matrixvalued_2000} that
$$
\forall z \in \UpperHalfPlane, \quad \GreensFunction(z) = \int_\RealNumbers \frac{d\mathcal A(\eps)}{z-\eps}, 
$$
where $\mathcal A$ is an operator-valued Borel measure on $\R$ such that
$$
\forall B \in \mathcal B(\R), \quad \mathcal A(B) \in \mathcal S(\OneParticleSpace), \quad 0 \le \mathcal A(B) \le 1, \quad \mathcal A(\R)=I_{\OneParticleSpace},
$$
where $\mathcal B(\R)$ is the set of Borel subsets of $\R$.
The operator-valued measure $\mathcal A$ is called the spectral function of the system in the equilibrium state $\widehat \Gamma$ under consideration. Another useful expression of $G$ is its K\"allen--Lehmann representation, obtained by expanding the operators $\widehat \Gamma$ and $\widehat H$ in the orthonormal basis $(\Psi^N_n)$ diagonalizing both of them. We indeed have
$$
\langle \phi,G(z)\phi' \rangle_{\OneParticleSpace}= \sum_{N=0}^{\tiny{\dim(\OneParticleSpace)-1}} 
\sum_{n=0}^{\tiny{\begin{pmatrix}\dim(\mathcal{H})\\ N\end{pmatrix}}}
\sum_{n'=0}^{\tiny{\begin{pmatrix}\dim(\mathcal{H})\\ N+1\end{pmatrix}}}
 \frac{f^N_n + f^{N+1}_{n'}}{z-(E^{N+1}_{n'}-E^N_n)} \langle \Psi^N_n,\widehat a_\phi \Psi^{N+1}_{n'}\rangle_{{\rm Fock}(\OneParticleSpace)} \langle \Psi^{N+1}_{n'},\widehat a_{\phi'}^\dagger \Psi^N_n \rangle_{{\rm Fock}(\OneParticleSpace)}.
$$
The Källen--Lehmann representation reveals that $G$ is a rational function of $z$, the poles of which are some of the excitation energies of the system. More precisely, if $\Gamma$ is the $N$-particle ground state, the poles of $G$ are the excitation energies $E^{N+1}_n-E^N_0$ (one-particle affinities) and $E^N_0-E^{N_1}_n$ (one-particle ionization energies). The same holds for the canonical Gibbs state in the $N$-particle sector. 

\medskip

In the special case of a non-interacting system, i.e. if the Hamiltonian is given by $\widehat H^0 = d\Gamma(H^0)$ for some hermitian operator $H^0$ on $\OneParticleSpace$, the Green's function and the spectral function are given by
$$
G^0(z) = (z-H^0)^{-1}, \quad \mathcal A_0(\bullet) = \Pi^{H^0}_\bullet, 
$$
where $\Pi^{H^0}_\bullet$ is the spectral projector of $H^0$. In particular, $G^0$ is independent of $\widehat \Gamma$ (in contrast with the interacting case) and is simply the restriction to $\UpperHalfPlane$ of the resolvent of $H_0$.

\medskip

Let us now consider a Hamiltonian $\widehat H$ on ${\rm Fock}(\OneParticleSpace)$ of the form
$$
\widehat H = d\Gamma(H^0) + \widehat H^{\rm I} \qquad \mbox{(non-interacting part + interacting part)},
$$
and $\GreensFunction:\UpperHalfPlane \to \mathcal L(\OneParticleSpace)$ the Green's function of $\widehat H$ for some equilibrium state $\Gamma$. The self-energy is the function $\SelfEnergy_{\rm g}:\UpperHalfPlane \to \mathcal L(\OneParticleSpace)$ (the subscript $g$ stands for global, in contrast with local, see below) defined by
$$
\SelfEnergy_{\rm g}(z):=G^0(z)^{-1} - G(z)^{-1} \qquad \mbox{or} \qquad G(z)=\left(z-\left(H^0+\SelfEnergy_{\rm g}(z)\right) \right)^{-1}.
$$
Note that $-\SelfEnergy$ also is an operator-valued Pick function. For later purposes, we introduce the functional $g$
$$
[g(\SelfEnergy)](z) = (z-(H^0+\SelfEnergy(z))^{-1} \in \mathcal L(\OneParticleSpace)
$$
which is well-defined on the set of negatives of operator-valued Pick functions from $\UpperHalfPlane$ to $\mathcal L(\OneParticleSpace)$ and such that $G=g(\SelfEnergy_{\rm g})$.

\medskip

The Anderson impurity model (AIM) is a fermionic quantum many-body model with a special structure. The one-particle state space is of the form
$$
\OneParticleSpace_{\mathrm{AIM}} = \OneParticleSpace_{\rm imp} \oplus  \OneParticleSpace_{\rm bath},
$$
with $\dim(\OneParticleSpace_{\rm imp})$ ``small'' and $\dim(\OneParticleSpace_{\rm bath})$ potentially large or even infinite, and the interactions are localized in the impurity component. As a consequence, the AIM Hamiltonian is of the form 
$$
\widehat H_{\rm AIM} = d\Gamma(H^0_{\rm AIM}) + \widehat H_{\rm imp}^{\rm I} \otimes 1_{{\rm Fock}(\OneParticleSpace{\rm bath})}, \qquad 
H^0_{\rm AIM} = \left( \begin{array}{cc} H^0_{\rm i} & H^0_{\rm ib} \\ {H^0_{\rm ib}}^\dagger  & H^0_{\rm b} \end{array} \right)
$$
(recall that with ${\rm Fock}(\OneParticleSpace_{\rm AIM}) = {\rm Fock}(\OneParticleSpace_{\rm imp}) \otimes  {\rm Fock}(\OneParticleSpace_{\rm bath})$).
It can be shown for finite-dimensional bath~\cite{lin_sparsity_2020,cances_mathematical_2024} that the AIM self-energy for the Gibbs state $Z^{-1}e^{-\StatisticalTemperature(\widehat H_{\rm AIM}-\mu \widehat N_{\rm frag})}$ is of the form
$$
\SelfEnergy_{\StatisticalTemperature,\mu}(z)= \left( \begin{array}{cc} \SelfEnergy_{{\rm i},\StatisticalTemperature,\mu}(z) & 0 \\ 0  & 0 \end{array} \right) \qquad \mbox{and} \qquad \SelfEnergy_{{\rm i},\StatisticalTemperature,\mu}=\SelfEnergy^{\rm AIM}_{{\rm imp},\StatisticalTemperature,\mu}\left(\OneParticleSpace_{\rm imp},H^0_{\rm i},\Hybridization,\widehat H_{\rm imp}^{\rm I}\right),
$$
where $\Hybridization:\UpperHalfPlane \to \mathcal L(\OneParticleSpace_{\rm imp})$ is the {\em hybridization function}
$$
\Hybridization(z):= H^0_{\rm ib} (z-H^0_{\rm b})^{-1} {H^0_{\rm ib}}^\dagger.
$$
Note that $\Hybridization$ is the negative of an operator-valued Pick function as well. It is assumed in the physics literature that this also holds true for infinite-dimensional baths. For later purposes, we introduce the functional $g_{\rm imp}^{\rm AIM}$ defined by
$$
[g_{\rm imp}^{\rm AIM}(\OneParticleSpace_{\rm imp},H^0_{\rm i},\Hybridization,\SelfEnergy_{\rm i})](z) \!\!:=\!\! \!\!\ \left(\! z\!- \!\left( H^0_{\rm i} + \Hybridization(z) + \SelfEnergy_{\rm i}(z) \right) \right)^{-1} \!\!\!\!\!\!= \!\! \left( \! z \!- \!\left( \!\!H_{\rm AIM}^0  \! + \!\begin{pmatrix}\SelfEnergy_{\rm i}(z) & 0 \\ 0  & 0\end{pmatrix} \right)\right)^{-1} \! \bigg|_{\OneParticleSpace{\rm imp}},
$$
which is well-defined for $H^0_{\rm i} \in \mathcal S(\OneParticleSpace_{\rm imp})$ and $\SelfEnergy_{\rm i}$, $\Hybridization$ negatives of operator-valued Pick functions from $\UpperHalfPlane$ to $\mathcal L(\OneParticleSpace_{\rm imp})$. The impurity block of the exact Green's function of the AIM for the considered Gibbs state is given by
$$
G_{\rm AIM}|_{\OneParticleSpace{\rm imp}}=g_{\rm imp}^{\rm AIM}(\OneParticleSpace_{\rm imp},H^0_{\rm i},\Hybridization,\SelfEnergy_{\rm i}).
$$

\medskip

DMFT is a quantum embedding method in which the system is decomposed in non-overlapping fragments, each of them being seen as an impurity embedded in a bath accounting in the presence of the other fragments. In mathematical terms, the system state space $\OneParticleSpace$ is decomposed as a direct sum of pairwise orthogonal impurity subspaces:
$$
\OneParticleSpace = \OneParticleSpace_{\rm imp}^1 \oplus \cdots \oplus \OneParticleSpace_{\rm imp}^{M}
$$
and we denote by 
$$
H^0 = \left( \begin{array}{cc} H^0_{j} & H^0_{j\bar j} \\ {H^0_{j\bar j}}^\dagger & H^0_{\bar j} \end{array} \right), \qquad \widehat H^{\rm I} = \widehat H^{\rm I}_{j} + \widehat H^{\rm I}_{j\bar j} + \widehat H^{\rm I}_{\bar j} 
$$
the block representation of $H^0$ according to the decomposition $\OneParticleSpace = \OneParticleSpace^j_{\rm frag} \oplus (\OneParticleSpace^j_{\rm frag})^\perp$ and the corresponding decomposition of the interaction Hamiltonian $\widehat H^{\rm I}$.
 
\medskip

As other quantum embedding methods, such as DMET, DMFT can be seen as a fixed-point problem on a set of low-level descriptors of the state of the whole system. In DMFT, this set can be chosen as a set of block-diagonal approximations of the global self-energy
$$
\mathfrak S_{\rm DMFT}:=\left\{\SelfEnergy = \left(\begin{matrix}
\SelfEnergy_1 & 0 &\cdots& 0\\
0& \SelfEnergy_2&\cdots&0 \\
\vdots&\vdots&\ddots&\vdots \\
0&0&\cdots& \SelfEnergy_{M}
\end{matrix}\right),  \; \SelfEnergy_j : \UpperHalfPlane \to \mathcal L(\OneParticleSpace^j_{\rm imp}), -\SelfEnergy_j  \mbox{ operator-valued Pick function} \right\}
$$
The fixed-point iteration goes in two steps:
\begin{enumerate}
\item AIM construction: given $\SelfEnergy^{\rm in} \in \mathfrak S_{\rm DMFT}$, for each $j$, construct an AIM such that
$$
g\left(\SelfEnergy^{\rm in} \right)\big|_{\OneParticleSpace_{\rm imp}^j} = 
g_{\rm imp}^{\rm AIM}\left(\OneParticleSpace{\rm imp}^j,H^0_j,\Hybridization_j^{\rm out},\SelfEnergy_j^{\rm in}  \right)
$$
This does not uniquely define the AIM, but this does uniquely define $\Hybridization_j^{\rm out}$:
$$
\Hybridization_j^{\rm out}(z) = H^0_{j\bar j} \left( z - \left( H^0_{\bar j} + \bigoplus_{k \neq j} \SelfEnergy^{\rm in}_k(z) \right) \right)^{-1} {H^0_{j\bar j}}^\dagger;
$$
In physical terms, the hybridization function $\Hybridization_j$ encapsulates the effects of the environment of the $j$-th cluster and allows one to consider the latter as an impurity embedded in a bath.
\item Impurity solver: for each of these AIMs, compute an updated $\SelfEnergy^{\rm out} \in \mathfrak S_{\rm DMFT}$ by
$$
\SelfEnergy_j^{\rm out}:=\SelfEnergy^{\rm AIM}_{{\rm imp},\StatisticalTemperature,\mu}\left(\OneParticleSpace^j_{\rm imp},H^0_{j},\Hybridization_j^{\rm out},\widehat H^{\rm I}_{j}\right).
$$
In physical terms, for each cluster, compute the self-energy of the associated AIM, and concatenate the so-obtained solutions in the updated low-level descriptor $\SelfEnergy^{\rm out}$.
\end{enumerate}

We now restrict ourselves to a translation-invariant Hubbard model with $L < \infty$ sites, single-site impurities, and paramagnetic setting at half-filling. In this case 
$$
\quad M=L, \quad \OneParticleSpace^1_{\rm imp} \cong \cdots \cong \OneParticleSpace^L_{\rm imp} \cong \C^2, \quad \Hybridization_1=\cdots= \Hybridization_L=\Hybridization {\mathds 1}_2, \quad \SelfEnergy_1=\cdots= \SelfEnergy_L=\SelfEnergy {\mathds 1}_2. 
$$
In addition, we assume that the AIMs are solved by Iterative Perturbation Theory:
$$
[\SelfEnergy^{\rm AIM-IPT}_{{\rm imp},\StatisticalTemperature}(\Hybridization,U)](z) = U^2 [\IPTmap_\StatisticalTemperature(\Hybridization)](z) \mathds{1}_2,
$$
where $\IPTmap_\StatisticalTemperature$ is derived by perturbation theory at second-order in the on-site interaction parameter $U$. As we will see on Eq.~\eqref{eq:Sigma_IPT_MF} and the comments below, $\IPTmap_\StatisticalTemperature(\Hybridization)$ can be conveniently characterized by its values on the imaginary Matsubara frequencies.

\begin{figure}[h]
\centering

\begin{subfigure}{0.3\textwidth}
\centering
\includegraphics[width=\textwidth]{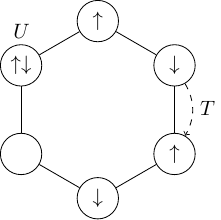}    
\end{subfigure}
\hspace{20mm}
\begin{subfigure}{0.3\textwidth}
\centering
\includegraphics[width=\textwidth]{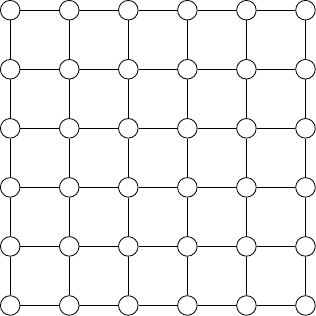}
\end{subfigure}

\caption{Left: Pariser--Parr--Pople model of benzene C$_6$H$_6$. Right: supercell model $(\Z/6\Z)^2$.\label{fig:Hubbard graphs}}
\end{figure}

\subsection{Matsubara frequency (MaF) discretization}

As explained above, the Matsubara frequency (MaF) discretization of IPT-DMFT equations aims at seeking approximations $(\HybridizationVector,\SelfEnergyVector) \in \LowerHalfPlaneVectors \times  \LowerHalfPlaneVectors$ of the values of the hybridization function $\Hybridization$ and local self-energy $\SelfEnergy$ at the lowest $(\MatsubaraFrequencyCutoff+1)$ imaginary Matsubara frequencies:
\begin{align}
\HybridizationVector = \left(\Hybridization_n\right)_{n \in \IntSubSet{0}{\MatsubaraFrequencyCutoff}}, \quad \Hybridization_n \simeq \Hybridization(i\MatsubaraFrequency_n),   \qquad
\SelfEnergyVector = \left(\SelfEnergy_n\right)_{n \in \IntSubSet{0}{\MatsubaraFrequencyCutoff}}, \quad  \SelfEnergy_n \simeq \SelfEnergy(i\MatsubaraFrequency_n),
\end{align}
for some Matsubara frequency cutoff $\MatsubaraFrequencyCutoff \in \Integers$. To formulate the MaF-discretized IPT-DMFT equations, it is convenient to set
\begin{equation}
\Hybridization_{-(n+1)}=\overline{\Hybridization_{n}} \in \overline{\UpperHalfPlane}, \quad \SelfEnergy_{-(n+1)}=\overline{\SelfEnergy_{n}} \in \overline{\UpperHalfPlane} \label{eq:ReflectionHybridizationSElfEnergy}
\end{equation}
in accordance with the conventional extension of a Pick function to the lower half-plane \cite{gesztesy_matrixvalued_2000} 
\begin{equation}\label{eq:extension_Pick_C-}
\forall z \in \LowerHalfPlane:=-\UpperHalfPlane, \quad f(z):=\overline{f(\overline{z})},
\end{equation}
and the fact that $\MatsubaraFrequency_{-(n+1)}=-\MatsubaraFrequency_n$. 

\medskip

The choice of the collocation points $\left(i\MatsubaraFrequency_n\right)_{n \in \IntSubSet{0}{\MatsubaraFrequencyCutoff}}$ in $\UpperHalfPlane$, represented in Figure \ref{fig:MatsubarafFrequencies}, is motivated by two reasons.
The first one is very generic. The imaginary Matsubara frequencies are the poles of the Fermi--Dirac function at inverse temperature $\StatisticalTemperature$ and therefore are natural collocation points when evaluating physical observables at Gibbs equilibrium using complex-plane contour integral techniques. 
The second reason is related to the expression of the IPT solver. The latter is formally defined at the imaginary Matsubara frequencies by the formula 
\begin{equation}\label{eq:Sigma_IPT_MF}
\forall n \in \N, \quad \IPTmap_\StatisticalTemperature(\Hybridization)(i\omega_n)=\int_{0}^\StatisticalTemperature e^{i\MatsubaraFrequency_n\tau} \left(\frac{1}{\StatisticalTemperature}\sum_{n' \in \RelativeIntegers }e^{-i\MatsubaraFrequency_{n'}\tau} \left(i\MatsubaraFrequency_{n'} - \Hybridization(i\MatsubaraFrequency_{n'})\right)^{-1}\right)^3 d\tau,
\end{equation}
where $\Hybridization$ is extended to negative imaginary Matsubara frequencies according to~\eqref{eq:extension_Pick_C-}.
It is well-know that an analytic function defined on an open subset $\Omega$ of the complex plane is fully characterized by its values on a countable number of points of $\Omega$ with an accumulation point in $\Omega$. But as the sequence of imaginary Matsubara frequencies $\left(i\MatsubaraFrequency_n\right)_{n \in \Integers}$ has no accumulation point in $\UpperHalfPlane$, it is not clear that~\eqref{eq:Sigma_IPT_MF} fully defines $\IPTmap_\StatisticalTemperature$. It is however the case if the function $\Hybridization$ is the negative of a meromorphic Pick function with a finite number of poles. It can then be shown that the so-defined map can be uniquely extended by (weak-)continuity to a function $\IPTmap_\StatisticalTemperature$ from the set of physically-admissible hybridization function to the set of physically-admissible local self-energies. We refer to ~\cite{cances_mathematical_2024} for more details on this point.

In view of~\eqref{eq:Sigma_IPT_MF}, a natural MaF discretization of~\eqref{eq:IPTUpperHalfPlane} is 
$$
\SelfEnergyVector =U^2 \IPTmap_{\StatisticalTemperature,\MatsubaraFrequencyCutoff}(\HybridizationVector),
$$
with 
\begin{equation}\label{eq:def_FNomegabeta}
\forall n \in \IntSubSet{0}{\MatsubaraFrequencyCutoff}, \quad [\IPTmap_{\StatisticalTemperature,\MatsubaraFrequencyCutoff}(\HybridizationVector)]_n :=\int_{0}^\StatisticalTemperature e^{i\MatsubaraFrequency_n \tau} \left(\frac{1}{\StatisticalTemperature} \sum_{n'=-(\MatsubaraFrequencyCutoff+1)}^{\MatsubaraFrequencyCutoff} e^{-i\MatsubaraFrequency_{n'} \tau}\left(i\MatsubaraFrequency_{n'}-\Hybridization_{n'} \right)^{-1}\right)^3 d\tau.
\end{equation}
Note that the discretization of \eqref{eq:IPTUpperHalfPlane} is nonlocal in the sense that $\SelfEnergy_n$ depends on $\left(\Hybridization_{n'}\right)_{n' \in \IntSubSet{0}{\MatsubaraFrequencyCutoff}}$ and not only on $\Hybridization_n$.

\begin{figure}[h]
\centering
\begin{tikzpicture}
\draw[->] (-1cm,0) -- (2cm,0) node[right] {$\Re(z)$};
\draw[->] (0,-1cm) -- (0,5cm) node[above] {$\Im(z)$};
\path (0,0.5cm)
 foreach \y/\ytext in {0/, 1/3,2/5,3/7,4/9} {
    pic [yshift=\y cm] {code={\draw (2pt,0) node[left=0.25cm] {$\ytext \pi/\StatisticalTemperature$}--(-2pt,0);}}
  }
  -- (0,4.5cm);
\path (0cm,0.5cm) 
foreach \y in {0,1,2,3,4} {
	pic [yshift=\y cm] {code={\node [right=0.25 cm] {$i\MatsubaraFrequency_{\y}$}--(-2pt,-2pt);}}
  }
  -- (2cm,4.5cm);

\end{tikzpicture}
\caption{The set $\left(i\MatsubaraFrequency_n\right)_{n \in \IntSubSet{0}{\MatsubaraFrequencyCutoff}}$ of points in $\UpperHalfPlane$ used for the discretization of $\Hybridization$ and $\SelfEnergy$ with $\MatsubaraFrequencyCutoff=4$}
\label{fig:MatsubarafFrequencies}
\end{figure}

\medskip

 We consolidate the content of this section into the following definition.

\begin{definition}[MaF-discretized IPT-DMFT equations] \label{def:MatsubarasEquations}
Consider an undirected, loopless, vertex transitive graph $\HubbardGraph=(\HubbardVertices,\HubbardEdges)$ with $\Cardinal{\HubbardVertices}=L \in \Integers$ and associated adjacency matrix $\AdjacencyMatrix \in \RealNumbers^{L \times L}_{\rm sym}$.
Given a hopping amplitude $\HoppingMatrix[] \in \RealNumbers$, introduce $\NIHamiltonian_\perp \in \RealNumbers^{(L-1) \times (L-1)}_{\rm sym}$ and $W \in \RealNumbers^{L-1}$ such that
\begin{equation}
\HoppingMatrix[] \,  \AdjacencyMatrix= \left(\begin{matrix}
0 & W^T \\
W & \NIHamiltonian_\perp
\end{matrix}\right). \nonumber
\end{equation}
Given an on-site interaction energy $\OnSiteRepulsion[] \in \RealNumbers$, an inverse temperature $\StatisticalTemperature \in \RealNumbers_{+}^*$ and a Matsubara frequencies cutoff $\MatsubaraFrequencyCutoff \in \Integers$, the \emph{MaF-discretized \acrshort{ipt}-\acrshort{dmft} equations} consist of the following set of equations: find $\HybridizationVector =\left(\Hybridization_n \right)_{n \in \IntSubSet{0}{\MatsubaraFrequencyCutoff}} \in \LowerHalfPlaneVectors$ and $\SelfEnergyVector=\left(\SelfEnergy_n\right)_{n \in \IntSubSet{0}{\MatsubaraFrequencyCutoff}} \in \LowerHalfPlaneVectors$ such that 
\begin{align}
\forall n \in \IntSubSet{0}{\MatsubaraFrequencyCutoff}, \quad  \Hybridization_n&=W^T\left(i\MatsubaraFrequency_n - \NIHamiltonian_\perp - \SelfEnergy_n \right)^{-1}W, \label{eq:BUMFDiscretized}\\
\SelfEnergyVector &=\OnSiteRepulsion[]^2 \IPTmap_{\StatisticalTemperature,\MatsubaraFrequencyCutoff}(\HybridizationVector), \label{eq:IPTMFDiscretized}
\end{align}
where $\MatsubaraFrequency_n=\frac{2(n+1)\pi}{\StatisticalTemperature}$ is the $n$-th Matsubara frequency and $\IPTmap_{\StatisticalTemperature,\MatsubaraFrequencyCutoff}$ is defined by \eqref{eq:def_FNomegabeta}.
\end{definition}

\begin{remark}[comment on the set of admissible solutions]\label{rmk:AdmissibleSolutions}
    Note that we restrict ourselves to solutions satisfying the condition $\HybridizationVector,\SelfEnergyVector \in \LowerHalfPlaneVectors$.
    This is motivated by the fact that $\Hybridization$ and $\SelfEnergy$ are negatives of Pick functions.
    Nevertheless, for $\MatsubaraFrequencyCutoff > 0$, not every element of $\LowerHalfPlaneVectors$ can be seen as the set of values of some negative of a Pick function at the lowest $(\MatsubaraFrequencyCutoff +1)$ imaginary Matsubara frequencies. The problem of finding a Pick function that interpolates these values is known as the Nevanlinna--Pick interpolation problem, and the reader is referred to \cite{cances_mathematical_2024} for a brief overview of this topic.
    In particular, nothing ensures \emph{a priori} that the values $\boldsymbol{ \Hybridization}$ or $\boldsymbol{\SelfEnergy}$ obtained by solving~\eqref{eq:BUMFDiscretized}--\eqref{eq:IPTMFDiscretized} can be interpolated by the negative of a Pick function. The numerical simulations reported in Section \ref{sec:IterativeSchemeMottTransiton} reveal that, for a large set of parameters, this is indeed not the case. This issue is well-identified when using approximate impurity solvers in \acrshort{dmft} computations, as already pointed out in e.g.~\cite{fei_nevanlinna_2021,fei_analytical_2021}.
\end{remark}

\subsection{Dimensionless formulation}

Let us now provide a dimensionless version of the MaF-discretized IPT-DMFT equations suitable for mathematical analysis.
To do so, consider the following dimensionless quantities (recall that both $\Hybridization$ and $\SelfEnergy$ have energy dimensions, while $\StatisticalTemperature$ has the dimension of the inverse of an energy)
\begin{equation}
   \HybridizationVector'= \StatisticalTemperature\HybridizationVector, \quad \SelfEnergyVector'=\StatisticalTemperature\SelfEnergyVector.
\end{equation}
Obviously, $(\HybridizationVector,\SelfEnergyVector)$ satisfies equations \eqref{eq:BUMFDiscretized}--\eqref{eq:IPTMFDiscretized} if and only if $(\HybridizationVector',\SelfEnergyVector')$ satisfies the following equations
\begin{align}
\forall  n \in\IntSubSet{0}{\MatsubaraFrequencyCutoff}, \qquad   \Hybridization'_n &=t^2 w^T\left(i(2n+1)\pi - t h^0_\perp - \SelfEnergy'_n\right)^{-1}w, \\
    \SelfEnergy'_n &= u^2\int_0^1 e^{i(2n+1)\pi \tau'} \left(\sum_{n'=-(\MatsubaraFrequencyCutoff+1)}^{\MatsubaraFrequencyCutoff} e^{-i(2n'+1) \pi \tau'}\left(i(2n'+1)\pi-\Hybridization'_{n'} \right)^{-1}\right)^3 d\tau',\label{eq:AlmostUndimmensionnalizedSelfEnergy}
\end{align}
with 
$$
t=\StatisticalTemperature\HoppingMatrix[], \qquad u=\StatisticalTemperature \OnSiteRepulsion[], \quad w \in \{0,1\}^{L-1}, \; h^0_\perp \in \{0,1\}_{(L-1)\times(L-1)} \mbox{ such that }  \AdjacencyMatrix=\left(\begin{matrix}
        0 & w^T \\
        w & h^0_\perp
    \end{matrix}\right).
$$
Since for all $n \in \IntSubSet{0}{\MatsubaraFrequencyCutoff}$,
\begin{align}
    &\int_0^1 e^{i(2n+1)\pi \tau'} \left(\sum_{n'=-(\MatsubaraFrequencyCutoff+1)}^{\MatsubaraFrequencyCutoff} e^{-i(2n'+1) \pi \tau'}\left(i(2n'+1)\pi-\Hybridization'_{n'} \right)^{-1}\right)^3 d \tau' = \\
    &\sum_{n_1,n_2,n_3=-(\MatsubaraFrequencyCutoff+1)}^{\MatsubaraFrequencyCutoff}\prod_{j=1}^3 \left(i(2n_j+1)\pi - \Hybridization'_{n_j}\right)^{-1} \int_{0}^1 e^{i2\pi\left(n-1-n_1-n_2-n_3\right)\tau'}d\tau'= \nonumber \\
    &\sum_{\substack{n_1,n_2,n_3=-(\MatsubaraFrequencyCutoff+1) \\ n_1 + n_2 + n_3 =n-1}}^{\MatsubaraFrequencyCutoff} \prod_{j=1}^3\left(i(2n_j+1)\pi - \Hybridization'_{n_j}\right)^{-1},
\end{align}
\eqref{eq:AlmostUndimmensionnalizedSelfEnergy} can be rewritten as 
\begin{equation}\label{eq:NondimensionalIPTOutOfDefinition}
   \bm  \SelfEnergy'=u^2 F_{\MatsubaraFrequencyCutoff}(\HybridizationVector'),
\end{equation}
where  $F_{\MatsubaraFrequencyCutoff}: \LowerHalfPlaneVectors \to \ComplexNumbers^{\MatsubaraFrequencyCutoff+1}$ is the rational fraction defined by, for all $\mathbf{z}\in \UpperHalfPlane^{\MatsubaraFrequencyCutoff+1}$
\begin{equation}
F_{\MatsubaraFrequencyCutoff}(\bm z)_{n}=\sum_{\substack{n_1,n_2,n_3=-(\MatsubaraFrequencyCutoff+1) \\ n_1 + n_2 + n_3 =n-1}}^{\MatsubaraFrequencyCutoff} \prod_{j=1}^3\left(i(2n_j+1)\pi - z_{n_j} \right)^{-1},\label{eq:FunctionCriticalRadius}
\end{equation}
where for all $n \in \IntSubSet{-\MatsubaraFrequencyCutoff+1}{-1}, z_{n}:=\overline{z_{-(n+1)}}$ (as in \eqref{eq:ReflectionHybridizationSElfEnergy}).

\medskip

Replacing $\Hybridization',\SelfEnergy'$ by $\Hybridization,\SelfEnergy$ in order to simplify the notation, we end up with the following dimensionless formulation of the MaF-discretization of the  \acrshort{ipt}-\acrshort{dmft} equations.
\begin{definition}[dimensionless MaF-discretized \acrshort{ipt}-\acrshort{dmft} equations]
\label{def:NondimensionalEquations}
Consider an undirected, loopless, vertex transitive graph $\HubbardGraph=(\HubbardVertices,\HubbardEdges)$ with $\Cardinal{\HubbardVertices}=L \in \Integers^*$ vertices and adjacency matrix $\AdjacencyMatrix$, a hopping parameter  $\HoppingMatrix[] \in \RealNumbers$, an on-site interaction energy $U \in \R$, and an inverse temperature $\StatisticalTemperature \in \RealNumbers_{+}^*$. The dimensionless MaF-discretized IPT-DMFT equations reads as follows: find $\HybridizationVector = (\Hybridization_n)_{n \in\IntSubSet{0}{\MatsubaraFrequencyCutoff}} \in \LowerHalfPlaneVectors$ and $\SelfEnergyVector = (\SelfEnergy_n)_{n \in\IntSubSet{0}{\MatsubaraFrequencyCutoff}}  \in \LowerHalfPlaneVectors$ satisfying
    \begin{align}
   \forall   n \in \IntSubSet{0}{\MatsubaraFrequencyCutoff}, \qquad   \Hybridization_n &=t^2 w^T\left(i(2n+1)\pi - t h^0_\perp - \SelfEnergy_n\right)^{-1}w \label{eq:NondimensionalBU}\\
        \SelfEnergy_n&=u^2 F_{\MatsubaraFrequencyCutoff}(\HybridizationVector)_n, \label{eq:NondimensionalIPT}
    \end{align}
    with $t=\StatisticalTemperature \HoppingMatrix[]$, $u=\StatisticalTemperature \OnSiteRepulsion[]$, $F_{\MatsubaraFrequencyCutoff}$ given by \eqref{eq:FunctionCriticalRadius} and $w \in \R^{L-1}$ and $h^0_\perp \in \R^{(L-1) \times (L-1)}_{\rm sym}$ such that
    \begin{equation}
        \AdjacencyMatrix=\left( \begin{matrix}
            0 & w^T \\
            w & h^0_\perp
        \end{matrix} \right).
    \end{equation}
\end{definition}

\begin{remark}[comparison with other dimensionless versions of the IPT-DMFT equations]

In the existing literature \cite{georges_dynamical_1996}, the dimensionless quantities are rather expressed in units of the hopping parameter~$\HoppingMatrix[]$, or, more precisely, in units of the half-band width 
\begin{equation}
    D:=\HoppingMatrix[] \, \mathrm{diam}(\sigma(\AdjacencyMatrix))/2,
\end{equation}
where $\mathrm{diam}(\sigma(\AdjacencyMatrix))$ is the diameter of the spectrum $\sigma(\AdjacencyMatrix)$ of the real-symmetric adjacency matrix.
This choice is made to facilitate the comparison with experimental data, such as temperature-pressure phase diagram of transition metal oxides \cite{georges_dynamical_1996}, but is less suited to the mathematical analysis of the equations.
\end{remark}

\section{Existence and uniqueness results}\label{sec:MainResults}

In this section, we present our theoretical results regarding the solution(s) to the dimensionless MaF-discretized IPT-DMFT equations \eqref{eq:NondimensionalBU}--\eqref{eq:NondimensionalIPT}.

\subsection{Existence of solutions}

Using Brouwer's fixed-point theorem \cite{shapiro_fixed-point_2016}, we are able to show that \eqref{eq:BUMFDiscretized}--\eqref{eq:IPTMFDiscretized} admit a solution, but only within a specific parameter range, except for $\MatsubaraFrequencyCutoff =0$. This limitation arises from the fact that, in general and for $\MatsubaraFrequencyCutoff > 0$, the map $F_{\MatsubaraFrequencyCutoff} : \C^{\MatsubaraFrequencyCutoff+1} \to \C^{\MatsubaraFrequencyCutoff+1}$ does not map $\LowerHalfPlaneVectors$ onto itself. For instance, for $\MatsubaraFrequencyCutoff=1$ and $\HybridizationVector=(-i\pi x_0, -3i\pi x_1) \in -\overline{\UpperHalfPlane}^2$ with  $x_0,x_1  \in \RealNumbers_+$, we have
\begin{equation} \label{eq:CounterExampleF}
    F_{1}(\HybridizationVector)_1=-\frac{i}{\left((1+x_0)\pi\right)^3}\left(\frac{1}{9} \left(\frac{1+x_0}{1+x_1}\right)^{3} + 2 \frac{1+x_0}{1+x_1} -1 \right) \xrightarrow{x_1 \to +\infty} \frac{i}{\left((1+x_0)\pi\right)^3} \notin -\overline{\UpperHalfPlane}.
\end{equation}
Such a problem does not hold for $\MatsubaraFrequencyCutoff =0$, for which we have
  \begin{equation} \label{eq:F0}
 \forall \Hybridization \in -\UpperHalfPlane, \quad F_0(\Hybridization)=\frac{3}{\vert i\pi-\Hybridization|^2(i\pi-\Hybridization)} \in -\overline{\UpperHalfPlane}.
    \end{equation}

\begin{theorem}[Existence of solution to the discretized \acrshort{ipt}-\acrshort{dmft} equations.]\label{thm:GlobalExistenceMFDiscretized}
Let $\MatsubaraFrequencyCutoff \in \Integers^*$ and 
\begin{equation}
\label{eq:StableSpace}
C_{\MatsubaraFrequencyCutoff} := \left\{ \bm z=(z_n)_{n \in \IntSubSet{0}{\MatsubaraFrequencyCutoff}}  \in \LowerHalfPlaneVectors \; \big| \; |z_n| \le \frac{1}{(2n+1)\pi} \right\}.
\end{equation}
There exists $t_{\MatsubaraFrequencyCutoff} > 0$ depending only on $\MatsubaraFrequencyCutoff$ such that for all
$0 \le t \le \frac{t_{\MatsubaraFrequencyCutoff}}{\sqrt{\mathrm{deg}(\HubbardGraph)}}$, the MaF-discretized \acrshort{ipt}-\acrshort{dmft} equations~\eqref{eq:NondimensionalBU}--\eqref{eq:NondimensionalIPT} have a solution 
\begin{equation}\label{eq:bounds_on_the_solution}
(\HybridizationVector,\SelfEnergyVector) \in \HybridizationSpaceDiscretized  \times u^2 F_{\MatsubaraFrequencyCutoff}\left(\HybridizationSpaceDiscretized\right) \subset \LowerHalfPlaneVectors \times \LowerHalfPlaneVectors,
\end{equation}
where $\HybridizationSpaceDiscretized$ is given by
\begin{equation}
    \label{eq:HybridizationSpaceDiscretized}
    \HybridizationSpaceDiscretized=t^2  \deg(\HubbardGraph) C_{\MatsubaraFrequencyCutoff}=\left\{ \bm z=(z_n)_{n \in \IntSubSet{0}{\MatsubaraFrequencyCutoff}}  \in \LowerHalfPlaneVectors \; \big| \; |z_n| \le \frac{t^2\deg(\HubbardGraph)}{(2n+1)\pi} \right\}.
\end{equation}
\end{theorem}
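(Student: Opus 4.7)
The strategy is to absorb (\ref{eq:NondimensionalIPT}) into (\ref{eq:NondimensionalBU}) and reduce the system to a single fixed-point problem on $\HybridizationVector$ alone. Define
\begin{equation*}
\Psi(\HybridizationVector)_n := t^2\,w^T\Bigl(i(2n+1)\pi\,I_{L-1}-t h^0_\perp-u^2 F_{\MatsubaraFrequencyCutoff}(\HybridizationVector)_n\,I_{L-1}\Bigr)^{-1}w,\quad n\in\IntSubSet{0}{\MatsubaraFrequencyCutoff}.
\end{equation*}
Any fixed point of $\Psi$ in $\HybridizationSpaceDiscretized$ yields a solution of (\ref{eq:NondimensionalBU})--(\ref{eq:NondimensionalIPT}) by setting $\SelfEnergyVector:=u^2 F_{\MatsubaraFrequencyCutoff}(\HybridizationVector)$. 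Since $\HybridizationSpaceDiscretized$ is a nonempty, convex, compact subset of $\C^{\MatsubaraFrequencyCutoff+1}$, Brouwer's fixed-point theorem applies as soon as $\Psi$ is continuous on $\HybridizationSpaceDiscretized$ and maps it into itself.

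The crucial structural input is that $F_{\MatsubaraFrequencyCutoff}$ sends $\HybridizationSpaceDiscretized$ into $-\overline{\UpperHalfPlane}^{\MatsubaraFrequencyCutoff+1}$ when $t$ is sufficiently small. First, I would evaluate (\ref{eq:FunctionCriticalRadius}) at $\HybridizationVector=\mathbf 0$: recognizing $\tau\mapsto\sum_{n'=-(\MatsubaraFrequencyCutoff+1)}^{\MatsubaraFrequencyCutoff} e^{-i(2n'+1)\pi\tau}/(i(2n'+1)\pi)$ as a real-valued partial Fourier sum of the half-filled non-interacting imaginary-time Green's function (or by a direct combinatorial reorganization of (\ref{eq:FunctionCriticalRadius})), one obtains the strict sign $\Im F_{\MatsubaraFrequencyCutoff}(\mathbf 0)_n<0$ for every $n$. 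Continuity of the rational function $F_{\MatsubaraFrequencyCutoff}$ near $\mathbf 0$ then provides $\epsilon_{\MatsubaraFrequencyCutoff}>0$ such that $\max_n|z_n|\le\epsilon_{\MatsubaraFrequencyCutoff}$ implies $\Im F_{\MatsubaraFrequencyCutoff}(\bm z)_n\le 0$ for all $n$. Since any $\HybridizationVector\in\HybridizationSpaceDiscretized$ satisfies $\max_n|\Hybridization_n|\le t^2\mathrm{deg}(\HubbardGraph)/\pi$, imposing $t^2\mathrm{deg}(\HubbardGraph)\le\pi\,\epsilon_{\MatsubaraFrequencyCutoff}$, i.e.\ $t\le t_{\MatsubaraFrequencyCutoff}/\sqrt{\mathrm{deg}(\HubbardGraph)}$ with $t_{\MatsubaraFrequencyCutoff}$ depending only on $\MatsubaraFrequencyCutoff$, forces $u^2\Im F_{\MatsubaraFrequencyCutoff}(\HybridizationVector)_n\le 0$ uniformly on $\HybridizationSpaceDiscretized$, \emph{for every $u\ge 0$}.

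Once $\SelfEnergy_n:=u^2 F_{\MatsubaraFrequencyCutoff}(\HybridizationVector)_n$ has non-positive imaginary part, the remainder is elementary. Diagonalizing the real symmetric matrix $h^0_\perp=Q\,\mathrm{diag}(d_k)\,Q^T$ turns the matrix to be inverted into a normal matrix whose eigenvalues are $(-\Re\SelfEnergy_n-td_k)+i((2n+1)\pi-\Im\SelfEnergy_n)$, each of modulus at least $(2n+1)\pi$. The Rayleigh-type expansion
\begin{equation*}
w^T(\cdots)^{-1}w=\sum_{k=1}^{L-1}\frac{(Q^Tw)_k^2}{(-\Re\SelfEnergy_n-td_k)+i((2n+1)\pi-\Im\SelfEnergy_n)}
\end{equation*}
then has non-positive imaginary part and modulus at most $\|w\|_2^2/((2n+1)\pi)=\mathrm{deg}(\HubbardGraph)/((2n+1)\pi)$, the last equality coming from the vertex-transitivity of $\HubbardGraph$ (which makes $\|w\|_2^2$ equal to the degree of any vertex). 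Multiplying by $t^2$ gives $\Psi(\HybridizationVector)\in\HybridizationSpaceDiscretized$. The same uniform lower bound on the eigenvalues ensures continuity of $\Psi$ on $\HybridizationSpaceDiscretized$, so Brouwer delivers a fixed point, which together with $\SelfEnergyVector:=u^2 F_{\MatsubaraFrequencyCutoff}(\HybridizationVector)$ gives exactly the bound~(\ref{eq:bounds_on_the_solution}).

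The main obstacle in this plan is the strict negativity $\Im F_{\MatsubaraFrequencyCutoff}(\mathbf 0)_n<0$, required uniformly in $n$ and in $\MatsubaraFrequencyCutoff$. Its continuous analogue is immediate from the Pick property of $\IPTmap_\StatisticalTemperature(0)$ established in~\cite{cances_mathematical_2024}, but the Matsubara truncation introduces Gibbs-type oscillations that must be handled explicitly, either via the combinatorial structure of the sum (\ref{eq:FunctionCriticalRadius}) or via a uniform analysis of the partial Fourier sums. The counter-example (\ref{eq:CounterExampleF}) shows that this imaginary-part sign is genuinely fragile on $\LowerHalfPlaneVectors$, which is precisely why the smallness of $t$ is unavoidable for $\MatsubaraFrequencyCutoff\ge 1$.
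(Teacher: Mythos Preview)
Your outline coincides with the paper's proof: Brouwer on the composed map $\Psi$, the diagonalization bound for the bath update giving $\Psi(\HybridizationVector)\in\HybridizationSpaceDiscretized$ whenever $\Im\SelfEnergy_n\le 0$, and a continuity/compactness argument around $\mathbf 0$ to produce $t_{\MatsubaraFrequencyCutoff}$. The only step you leave unresolved---and you flag it yourself---is the strict sign $\Im F_{\MatsubaraFrequencyCutoff}(\mathbf 0)_n<0$ for all $0\le n\le\MatsubaraFrequencyCutoff$.

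The paper does not attack this via the partial Fourier sums you mention (the Gibbs-oscillation route), but by a monotonicity-in-$\MatsubaraFrequencyCutoff$ argument. Writing $\varphi(n,\MatsubaraFrequencyCutoff)=\pi^3\Im F_{\MatsubaraFrequencyCutoff}(\mathbf 0)_n$, one shows by an explicit partial-fraction reorganization of the constrained triple sum in~(\ref{eq:FunctionCriticalRadius}) that $\varphi(n,\MatsubaraFrequencyCutoff+1)-\varphi(n,\MatsubaraFrequencyCutoff)>0$ for every fixed $n$. Dominated convergence then yields $\lim_{\MatsubaraFrequencyCutoff\to\infty}\Im F_{\MatsubaraFrequencyCutoff}(\mathbf 0)_n=\StatisticalTemperature^{-1}\Im\bigl(\IPTmap_{\StatisticalTemperature}(0)(i\MatsubaraFrequency_n)\bigr)<0$, the strict inequality being the Pick property of the continuous IPT map from~\cite{cances_mathematical_2024}. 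An increasing sequence with a strictly negative limit has all terms strictly negative, which closes the gap and makes the rest of your argument go through verbatim.
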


\medskip
\begin{remark}[on the non-interacting setting $u=0$]
    As discussed in the next section, the MaF-discretized IPT-DMFT equations admit a unique solution for $u=0$, characterized by
    \begin{equation}
        \forall n \in \IntSubSet{0}{\MatsubaraFrequencyCutoff}, \quad \SelfEnergy_n=0,  \quad \Hybridization_n= t^2w^T\left(i(2n+1)\pi-t h^0_\perp \right)w,
    \end{equation}
    which satisfies the condition $\HybridizationVector,\SelfEnergyVector \in \LowerHalfPlaneVectors$, independently of the value of $t$. This existence and uniqueness result does not extend automatically to the small $u$ regime by perturbation, as $-\SelfEnergyVector$ is on the boundary of the admissible domain $\LowerHalfPlaneVectors$ for $u=0$.
\end{remark}\medskip

\begin{proof}[Proof of Theorem~\ref{thm:GlobalExistenceMFDiscretized}]
The proof is based on Brouwer's fixed point theorem. Let $\BathUpdateDiscretized$ be defined, for all  $n \in \IntSubSet{0}{\MatsubaraFrequencyCutoff}$, by
\begin{equation}
\BathUpdateDiscretized(\SelfEnergyVector)_n =t^2 w^T\left(i(2n+1)\pi - th^0_\perp - \SelfEnergy_n\right)^{-1}w, \nonumber
\end{equation} so that \eqref{eq:NondimensionalBU} $\iff \HybridizationVector=\BathUpdateDiscretized(\SelfEnergyVector)$.
We start with the following lemma.

\begin{lemma}\label{lemma:BoundednessBUMFDiscretized}
For $\SelfEnergyVector\in\LowerHalfPlaneVectors$, $\BathUpdateDiscretized(\SelfEnergyVector)$ is well defined and belongs to $\LowerHalfPlaneVectors$. Moreover, for $n\in\IntSubSet{0}{\MatsubaraFrequencyCutoff}$,
\begin{equation}\nonumber
\Modulus{\BathUpdateDiscretized(\SelfEnergyVector)_n} \leq \frac{t^2\deg(\HubbardGraph)}{(2n+1)\pi} . 
\end{equation}
In other words, $\BathUpdateDiscretized(\SelfEnergyVector)\in \HybridizationSpaceDiscretized $, where $\HybridizationSpaceDiscretized$ is defined in equation~\eqref{eq:HybridizationSpaceDiscretized}.
\end{lemma}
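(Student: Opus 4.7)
The plan is to reduce the matrix resolvent $w^T(i(2n+1)\pi - th^0_\perp - \SelfEnergy_n)^{-1}w$ to a scalar Herglotz-type sum by diagonalising $h^0_\perp$, then control each term using only the positivity of the imaginary part of the denominator, and finally identify $|w|^2$ with the degree of the graph.

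First, I would show well-definedness. Fix $n \in \IntSubSet{0}{\MatsubaraFrequencyCutoff}$ and write $\SelfEnergy_n = a_n + i b_n$ with $b_n \leq 0$ since $\SelfEnergyVector \in \LowerHalfPlaneVectors$. Set
\[
M_n := i(2n+1)\pi I_{L-1} - t h^0_\perp - \SelfEnergy_n I_{L-1} = \bigl(-th^0_\perp - a_n I_{L-1}\bigr) + i\bigl((2n+1)\pi - b_n\bigr) I_{L-1}.
\]
The hermitian part $-th^0_\perp - a_n I_{L-1}$ and anti-hermitian part $\bigl((2n+1)\pi - b_n\bigr) I_{L-1}$ are both hermitian, and the latter is a strictly positive multiple of the identity (since $(2n+1)\pi > 0$ and $b_n \le 0$). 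Hence $M_n$ is invertible.

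Next, I diagonalise the real symmetric matrix $h^0_\perp$ in an orthonormal eigenbasis $(\phi_k)_{1 \le k \le L-1}$ with real eigenvalues $(\lambda_k)$. Setting $\alpha_k := |w^T \phi_k|^2 \ge 0$, completeness gives $\sum_k \alpha_k = |w|^2$, and since the three operators $I_{L-1}$, $h^0_\perp$, and $\SelfEnergy_n I_{L-1}$ are simultaneously diagonalisable in this basis,
\[
w^T M_n^{-1} w = \sum_{k=1}^{L-1} \frac{\alpha_k}{c_k(n)}, \qquad c_k(n) := -t\lambda_k - a_n + i\bigl((2n+1)\pi - b_n\bigr).
\]
For each $k$ the imaginary part of $c_k(n)$ satisfies $\Im\bigl(c_k(n)\bigr) = (2n+1)\pi - b_n \ge (2n+1)\pi$, hence $|c_k(n)| \ge (2n+1)\pi$ and $\Im\bigl(1/c_k(n)\bigr) = -\Im\bigl(c_k(n)\bigr)/|c_k(n)|^2 \le 0$. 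Summing the nonnegative-weighted terms yields
\[
\bigl|w^T M_n^{-1} w\bigr| \le \frac{|w|^2}{(2n+1)\pi}, \qquad \Im\bigl(w^T M_n^{-1} w\bigr) \le 0,
\]
so $\BathUpdateDiscretized(\SelfEnergyVector)_n = t^2 w^T M_n^{-1} w \in -\overline{\UpperHalfPlane}$.

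The final step is the identification $|w|^2 = \deg(\HubbardGraph)$. The vector $w$ is the off-diagonal row of the 0/1 adjacency matrix $\AdjacencyMatrix$ associated with vertex~$1$, so $|w|^2 = \sum_j w_j^2 = \sum_j w_j$ equals the number of neighbours of vertex $1$; vertex-transitivity of $\HubbardGraph$ forces this common degree to equal $\deg(\HubbardGraph)$. Inserting this in the bound above gives $|\BathUpdateDiscretized(\SelfEnergyVector)_n| \le t^2 \deg(\HubbardGraph)/((2n+1)\pi)$, i.e. $\BathUpdateDiscretized(\SelfEnergyVector) \in \HybridizationSpaceDiscretized$.

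I do not foresee a genuine obstacle: the only subtle ingredient is checking that $h^0_\perp$ and $\SelfEnergy_n I_{L-1}$ commute (trivially, because $\SelfEnergy_n I_{L-1}$ is a scalar matrix), which is what makes the eigenbasis reduction work in one shot without needing an operator-valued Pick/Herglotz argument.
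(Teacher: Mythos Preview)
Your proof is correct and follows essentially the same route as the paper: diagonalise the real symmetric matrix $h^0_\perp$, write $w^T M_n^{-1} w$ as a weighted sum of scalar terms $\alpha_k/c_k(n)$, bound each $|c_k(n)|$ from below by the imaginary part $(2n+1)\pi - b_n \ge (2n+1)\pi$, and identify $|w|^2 = \deg(\HubbardGraph)$ via vertex-transitivity. The only cosmetic difference is that the paper invokes the general fact that the negative inverse of a Pick matrix is Pick to get $\Im\bigl(\BathUpdateDiscretized(\SelfEnergyVector)_n\bigr)\le 0$, whereas you extract this sign directly from the same scalar decomposition; your version is slightly more self-contained but not a different argument.
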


\begin{proof}
The sign of the imaginary part is a direct consequence of the definition of the imaginary part of matrices and of the fact that if $f$ is a nonzero Pick matrix, that is $(f-f^\dagger)/(2i)$ is definite positive, then so is $-f^{-1}$. Recall that $h^0_\perp \in \SelfAdjointOperator_{L-1}(\RealNumbers)$, so that there exists $P \in \mathcal{M}_{L-1}(\ComplexNumbers)$ unitary with $h^0_\perp=P\diag(\eps_1,\ldots,\eps_{L-1})P^\dagger$. From that, we have for all $n \in \IntSubSet{0}{\MatsubaraFrequencyCutoff}$,
\begin{align}
t^{-2}\Modulus{\BathUpdateDiscretized(\SelfEnergyVector)_n} & \leq \sum_{k=1}^{L-1} \frac{\Modulus{(wP)_k}^2}{\Modulus{i(2n+1)\pi - \SelfEnergy_n - t\eps_k}} \leq \frac{\Norm{w}_2^2}{\left|(2n+1)\pi-\Im(\SelfEnergy_n) \right|}  \leq \frac{\mathrm{deg}(\HubbardGraph)}{(2n+1)\pi}, \label{eq:BUComponentWiseEstimate}
\end{align}
where we have used that $\Norm{w}_2^2=\mathrm{deg}(\HubbardGraph)$, because $\HubbardGraph$ is vertex transitive.
\end{proof}

The analogue of this lemma for $F_{\MatsubaraFrequencyCutoff}$ is more restrictive (see the counterexample given in equation \eqref{eq:CounterExampleF}), as we state now.

\begin{lemma}\label{lemma:IPTDiscretizedFormula}

For all Matsubara frequency cutoff $\MatsubaraFrequencyCutoff \in \N^\ast$, there exists $t_{\MatsubaraFrequencyCutoff}>0$ such that for all $0 \le t\leq t_{\MatsubaraFrequencyCutoff}$, $F_{\MatsubaraFrequencyCutoff}(\HybridizationSpaceDiscretized)\subset \LowerHalfPlaneVectors$.
\end{lemma}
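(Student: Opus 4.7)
The plan is to combine the continuity of $F_{\MatsubaraFrequencyCutoff}$ on $\LowerHalfPlaneVectors$ with the observation that the admissible ball $\HybridizationSpaceDiscretized = t^2\,\deg(\HubbardGraph)\,C_{\MatsubaraFrequencyCutoff}$ collapses to $\{\mathbf{0}\}$ as $t \to 0$. First I would remark that for $\bm z \in \LowerHalfPlaneVectors$, extended to negative indices by the reflection $z_{-(n+1)} = \overline{z_n}$, each denominator $i(2n+1)\pi - z_n$ has strictly positive imaginary part, so that $F_{\MatsubaraFrequencyCutoff}$ is continuous on a neighbourhood of $\LowerHalfPlaneVectors$.

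The heart of the proof is to show that $F_{\MatsubaraFrequencyCutoff}(\mathbf{0})$ lies componentwise strictly inside the open lower half-plane. Reading off the integral formula~\eqref{eq:def_FNomegabeta} in dimensionless variables gives
\begin{equation*}
F_{\MatsubaraFrequencyCutoff}(\mathbf{0})_n = \int_0^1 e^{i(2n+1)\pi \tau}\, G_{\MatsubaraFrequencyCutoff}(\tau)^3\,d\tau,\qquad
G_{\MatsubaraFrequencyCutoff}(\tau) := -\frac{2}{\pi} \sum_{k=0}^{\MatsubaraFrequencyCutoff} \frac{\sin\bigl((2k+1)\pi\tau\bigr)}{2k+1},
\end{equation*}
i.e.\ the $(2n+1)$-th antiperiodic Fourier coefficient of the cube of the truncated Fourier series of the non-interacting impurity Green's function at half-filling. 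Pairing the indices $n'$ with $-(n'+1)$ shows that $G_{\MatsubaraFrequencyCutoff}$ is real-valued, and the identity $G_{\MatsubaraFrequencyCutoff}(1-\tau) = G_{\MatsubaraFrequencyCutoff}(\tau)$, combined with $\sin\bigl((2n+1)\pi(1-\tau)\bigr) = \sin\bigl((2n+1)\pi\tau\bigr)$, implies that $F_{\MatsubaraFrequencyCutoff}(\mathbf{0})_n$ is purely imaginary with $\Im F_{\MatsubaraFrequencyCutoff}(\mathbf{0})_n = \int_0^1 \sin\bigl((2n+1)\pi\tau\bigr)\, G_{\MatsubaraFrequencyCutoff}(\tau)^3\,d\tau$. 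To prove this integral is strictly negative for every $n \in \IntSubSet{0}{\MatsubaraFrequencyCutoff}$, I would write $G_{\MatsubaraFrequencyCutoff} = -\tfrac{1}{2} + r_{\MatsubaraFrequencyCutoff}$, where $r_{\MatsubaraFrequencyCutoff}$ denotes the partial-sum remainder of the square-wave Fourier series, and expand $G_{\MatsubaraFrequencyCutoff}^3$. The constant term $-\tfrac{1}{8}$ produces the dominant contribution $-\tfrac{1}{4(2n+1)\pi}$, while the three correction integrals can be bounded using the uniform $L^p$ boundedness of the partial Fourier sums of the square wave. This is the main obstacle of the proof: the combinatorial sum $\sum_{n_1+n_2+n_3=n-1}\prod_{j=1}^3(i(2n_j+1)\pi)^{-1}$ has no obvious closed form, and the Gibbs-type oscillations of $G_{\MatsubaraFrequencyCutoff}$ must be controlled uniformly in $n$ and $\MatsubaraFrequencyCutoff$.

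Once strict negativity is established, continuity of $F_{\MatsubaraFrequencyCutoff}$ at $\mathbf{0}$ yields a neighbourhood $V \subset \LowerHalfPlaneVectors$ of $\mathbf{0}$ on which $F_{\MatsubaraFrequencyCutoff}$ still takes its values in $\LowerHalfPlaneVectors$, and it suffices to choose $t_{\MatsubaraFrequencyCutoff} > 0$ small enough that $\HybridizationSpaceDiscretized \subset V$ whenever $0 \le t \le t_{\MatsubaraFrequencyCutoff}$; by~\eqref{eq:HybridizationSpaceDiscretized} the set $\HybridizationSpaceDiscretized$ shrinks homothetically to $\{\mathbf{0}\}$ as $t \to 0$, so such a $t_{\MatsubaraFrequencyCutoff}$ exists.
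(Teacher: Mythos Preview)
Your overall architecture---continuity of $F_{\MatsubaraFrequencyCutoff}$ on $\LowerHalfPlaneVectors$, strict negativity of every $\Im F_{\MatsubaraFrequencyCutoff}(\mathbf{0})_n$, and then the homothetic shrinking of $\HybridizationSpaceDiscretized$ to $\{\mathbf{0}\}$---is exactly the paper's. The divergence is at the one hard step, the proof that $\Im F_{\MatsubaraFrequencyCutoff}(\mathbf{0})_n<0$ for all $0\le n\le\MatsubaraFrequencyCutoff$, and there your argument has a real gap.

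You propose to write $G_{\MatsubaraFrequencyCutoff}=-\tfrac12+r_{\MatsubaraFrequencyCutoff}$ and dominate the correction integrals by $L^p$ bounds on the square-wave remainder. The linear correction does vanish (your orthogonality observation is correct), but the crude bound on the quadratic piece,
\[
\tfrac32\Bigl|\int_0^1 \sin\bigl((2n+1)\pi\tau\bigr)\,r_{\MatsubaraFrequencyCutoff}(\tau)^2\,d\tau\Bigr|
\;\le\;\tfrac32\,\|r_{\MatsubaraFrequencyCutoff}\|_2^2
=\frac{3}{\pi^2}\sum_{k>\MatsubaraFrequencyCutoff}\frac{1}{(2k+1)^2}
\sim\frac{3}{4\pi^2\MatsubaraFrequencyCutoff},
\]
is \emph{larger} than the leading term $\tfrac{1}{4(2n+1)\pi}\sim\tfrac{1}{8\pi\MatsubaraFrequencyCutoff}$ at the top frequency $n=\MatsubaraFrequencyCutoff$ (since $\tfrac{3}{4\pi^2}>\tfrac{1}{8\pi}$). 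So uniform $L^p$ boundedness alone does not separate signal from Gibbs noise when $n$ is close to $\MatsubaraFrequencyCutoff$; a much finer cancellation argument would be needed, and you have not supplied one.

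The paper avoids this entirely. Writing $\varphi(n,\MatsubaraFrequencyCutoff):=\pi^3\Im F_{\MatsubaraFrequencyCutoff}(\mathbf{0})_n=\sum_{n_1+n_2+n_3=n-1}\prod_j(2n_j+1)^{-1}$, it shows by direct combinatorial manipulation (a partial-fraction regrouping of the constrained double sums appearing in $\varphi(n,\MatsubaraFrequencyCutoff+1)-\varphi(n,\MatsubaraFrequencyCutoff)$) that the map $\MatsubaraFrequencyCutoff\mapsto\varphi(n,\MatsubaraFrequencyCutoff)$ is \emph{increasing}. Dominated convergence then gives $\varphi(n,\MatsubaraFrequencyCutoff)\to\StatisticalTemperature^{-1}\pi^3\Im\bigl(\IPTmap_\StatisticalTemperature(0)(i\MatsubaraFrequency_n)\bigr)$, which is strictly negative by the continuous IPT result already established in~\cite{cances_mathematical_2024}. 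Monotonicity plus a negative limit forces $\varphi(n,\MatsubaraFrequencyCutoff)<0$ for every finite $\MatsubaraFrequencyCutoff$, with no quantitative harmonic analysis required.
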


\begin{proof}
Let us first prove that $\Im(F_{\MatsubaraFrequencyCutoff}(0)_n)< 0$. For all $n \in \IntSubSet{0}{\MatsubaraFrequencyCutoff}$, we define
\begin{equation} \nonumber
\varphi(n,\MatsubaraFrequencyCutoff) = \pi^3 \Im(F_{\MatsubaraFrequencyCutoff}(0)_n)=\sum_{\substack{n_1,n_2,n_3=-(\MatsubaraFrequencyCutoff+1) \\ n_1 + n_2 + n_3 =n-1}}^{\MatsubaraFrequencyCutoff} \prod_{i=1}^3\frac{1}{2n_i+1}
\end{equation}
Let us show that the positive rational number $\varphi(n,\MatsubaraFrequencyCutoff)$ increases with $\MatsubaraFrequencyCutoff$: by enumerating, we have for all $n \in \IntSubSet{0}{\MatsubaraFrequencyCutoff}$,
\begin{align}
\varphi(n,\MatsubaraFrequencyCutoff+1)-\varphi(n,\MatsubaraFrequencyCutoff)&=\frac{3}{2\MatsubaraFrequencyCutoff+3}\sum_{\substack{n_1,n_2=-(\MatsubaraFrequencyCutoff+1) \\ n_1 + n_2 + \MatsubaraFrequencyCutoff+1 = n-1}}^{\MatsubaraFrequencyCutoff} \frac{1}{(2n_1+1)(2n_2+1)}  \nonumber\\
&- \frac{3}{2\MatsubaraFrequencyCutoff+3} \sum_{\substack{n_1,n_2=-(\MatsubaraFrequencyCutoff+1) \\ n_1 + n_2 - (\MatsubaraFrequencyCutoff+2) = n-1}}^{\MatsubaraFrequencyCutoff} \frac{1}{(2n_1+1)(2n_2+1)}  \nonumber \\
&- \frac{6}{(2\MatsubaraFrequencyCutoff+3)^2(2n+1)}. \label{eq:phiNomega+1-phiNomega}
\end{align}
Now for the first term in the \acrshort{rhs} of \eqref{eq:phiNomega+1-phiNomega}, we have for all $n_1,n_2 \in \IntSubSet{-(\MatsubaraFrequencyCutoff+1)}{\MatsubaraFrequencyCutoff}$ satisfying $n_1+n_2+\MatsubaraFrequencyCutoff+1=n-1$,
\begin{equation}\nonumber
\frac{1}{(2n_1+1)(2n_2+1)}=\frac{1}{2(n-(\MatsubaraFrequencyCutoff+1))}\left(\frac{1}{2n_1+1}+\frac{1}{2n_2+1}\right)
\end{equation}
so that 
\begin{align}
\sum_{\substack{n_1,n_2=-(\MatsubaraFrequencyCutoff+1) \\ n_1 + n_2 + \MatsubaraFrequencyCutoff+1 = n-1}}^{\MatsubaraFrequencyCutoff} \frac{1}{(2n_1+1)(2n_2+1)} &= \frac{1}{n-(\MatsubaraFrequencyCutoff+1)}\sum_{n_1=-(\MatsubaraFrequencyCutoff+1)}^{n-1} \frac{1}{2n_1+1}\nonumber \\
&=\frac{1}{\MatsubaraFrequencyCutoff+1-n}\sum_{n_1=n}^{\MatsubaraFrequencyCutoff}\frac{1}{2n_1+1},\nonumber
\end{align}
where the last equality comes from the fact that $\sum_{n_1=-(\MatsubaraFrequencyCutoff+1)}^{\MatsubaraFrequencyCutoff}\frac{1}{2n_1+1}=0$.
For the second term, we also have for all $n_1,n_2 \in \IntSubSet{-(\MatsubaraFrequencyCutoff+1)}{\MatsubaraFrequencyCutoff}$ satisfying $n_1+n_2-(\MatsubaraFrequencyCutoff+2)=n-1$,
\begin{equation}\nonumber
\frac{1}{(2n_1+1)(2n_2+1)}=\frac{1}{2(n+\MatsubaraFrequencyCutoff+2)}\left(\frac{1}{2n_1+1}+\frac{1}{2n_2+1}\right),
\end{equation}
so that 
\begin{equation}\nonumber
\sum_{\substack{n_1,n_2=-(\MatsubaraFrequencyCutoff+1) \\ n_1 + n_2 - (\MatsubaraFrequencyCutoff+2) = n-1}}^{\MatsubaraFrequencyCutoff} \frac{1}{(2n_1+1)(2n_2+1)}= \frac{1}{n+\MatsubaraFrequencyCutoff+2}\sum_{n_1=n+1}^{\MatsubaraFrequencyCutoff}\frac{1}{2n_1+1}.
\end{equation}
We end up with 
\begin{align}
\varphi(n,\MatsubaraFrequencyCutoff+1)-\varphi(n,\MatsubaraFrequencyCutoff)
=&\frac{3}{2\MatsubaraFrequencyCutoff+3}\left( \frac{1}{\MatsubaraFrequencyCutoff+1-n}-\frac{1}{n+\MatsubaraFrequencyCutoff+2}\right)\sum_{n_1=n+1}^{\MatsubaraFrequencyCutoff} \frac{1}{2n_1+1} \nonumber \\
&+\frac{3}{(2\MatsubaraFrequencyCutoff+3)(2n+1)}\left(\frac{1}{(\MatsubaraFrequencyCutoff+1-n)}-\frac{2}{2\MatsubaraFrequencyCutoff+3}\right), \nonumber
\end{align}
which is positive, hence $\varphi(n,\MatsubaraFrequencyCutoff)$ increases with $\MatsubaraFrequencyCutoff$ and so is $\Im(F_{\MatsubaraFrequencyCutoff}(0)_n)$. From equations \eqref{eq:Sigma_IPT_MF} and \eqref{eq:def_FNomegabeta}, we obtain, using \eqref{eq:Sigma_IPT_MF}, \eqref{eq:def_FNomegabeta}, \cite[Proposition 2.18]{cances_mathematical_2024}, and the dominated convergence theorem that
$$
\lim_{\MatsubaraFrequencyCutoff \to \infty} \Im\left( F_{\MatsubaraFrequencyCutoff}(0)_n \right) = \frac{1}{\StatisticalTemperature} \Im\left(\IPTmap_\StatisticalTemperature(0)(i\MatsubaraFrequency_n)\right) < 0.
$$
Therefore,
\begin{equation}
\Im(F_{\MatsubaraFrequencyCutoff}(0)_n) \leq \frac{1}{\StatisticalTemperature}\Im(\IPTmap_{\StatisticalTemperature}(0)(i\MatsubaraFrequency_n)) < 0. \nonumber
\end{equation}
This shows that $F_{\MatsubaraFrequencyCutoff}(0)\in \LowerHalfPlaneVectors$. This set is open and $F_{\MatsubaraFrequencyCutoff}$ is continuous on $\LowerHalfPlaneVectors$. Since $\HybridizationSpaceDiscretized$ is a compact convex set containing $z=0$, we conclude by defining $\alpha_{\MatsubaraFrequencyCutoff}=\sup\left\{\alpha>0: F_{\MatsubaraFrequencyCutoff}(\alpha C_{\MatsubaraFrequencyCutoff})\subset\LowerHalfPlaneVectors\right\}$, and $t_{\MatsubaraFrequencyCutoff}=\sqrt{\alpha_{\MatsubaraFrequencyCutoff}}$.
\end{proof}

 We know that $0<t_{\MatsubaraFrequencyCutoff}$, and for $\MatsubaraFrequencyCutoff\geq 1$, $t_{\MatsubaraFrequencyCutoff}<\infty$, as shown in the counter-example~\eqref{eq:CounterExampleF}.
We conclude from the two previous lemmata, that for $t<t_{\MatsubaraFrequencyCutoff}/\sqrt{\deg(\HubbardGraph)}$, the map $\DMFTmapDiscretized: \HybridizationVector \mapsto \BathUpdateDiscretized(u^2F_{\MatsubaraFrequencyCutoff}(\HybridizationVector))$ is well-defined and maps $\HybridizationSpaceDiscretized$ onto itself: indeed, for all $\HybridizationVector \in \HybridizationSpaceDiscretized$, $u^2F_{\MatsubaraFrequencyCutoff}(\HybridizationVector) \in \LowerHalfPlaneVectors$ by lemma~\ref{lemma:IPTDiscretizedFormula}, hence $\BathUpdateDiscretized(u^2F_{\MatsubaraFrequencyCutoff}(\HybridizationVector)) \in \HybridizationSpaceDiscretized\subset\LowerHalfPlaneVectors$ by Lemma~\ref{lemma:BoundednessBUMFDiscretized}.

Since $\DMFTmapDiscretized$ is continuous and $\HybridizationSpaceDiscretized$ is compact, $\DMFTmapDiscretized$ has a fixed-point in $\HybridizationSpaceDiscretized$ by Brouwer's fixed-point theorem, which concludes the proof.
\end{proof}

\subsection{Uniqueness} \label{sec:LocalUniquenessIPTDMFT}

In this section, we state a result which guarantees the uniqueness of the solution. Note first that, similarly as for the continuous \acrshort{ipt}-\acrshort{dmft} equations \eqref{eq:BathUpdateUpperHalfPlane}--\eqref{eq:IPTUpperHalfPlane} (see \cite[Proposition 2.12]{cances_mathematical_2024}), the MaF-discretized IPT-DMFT equations \eqref{eq:NondimensionalBU}--\eqref{eq:NondimensionalIPT} admit a unique solution in the following trivial limits:
\begin{itemize}
\item for $u=0$, the unique solution is given by
\begin{equation} \nonumber
\forall n \in \IntSubSet{0}{\MatsubaraFrequencyCutoff}, \quad\SelfEnergy_n=0,\quad \Hybridization_n= t^2w^T\left(i(2n+1)\pi-t h^0_\perp \right)w;
\end{equation}
\item for $t=0$, the unique solution is given by
\begin{equation} \nonumber
\forall n \in \IntSubSet{0}{\MatsubaraFrequencyCutoff}, \quad \SelfEnergy_n=u^2F_{\MatsubaraFrequencyCutoff}(0)_n,\quad \Hybridization_n=0.
\end{equation}
\end{itemize}
In both cases, these solutions satisfy the condition $\HybridizationVector,\SelfEnergyVector \in \LowerHalfPlaneVectors$ (see the proof of Lemma \ref{lemma:IPTDiscretizedFormula} for the second case). The following results cover the perturbative regime around these two limiting cases: it proves the existence and uniqueness of a solution satisfying the bounds~\eqref{eq:bounds_on_the_solution}, as well as the linear convergence of the discretized IPT-DMFT simple fixed-point iteration scheme observed in numerical simulations in both perturbation regimes (see Section \ref{sec:IterativeSchemeMottTransiton}). 


\begin{theorem}[Uniqueness of the solution to the discretized \acrshort{ipt}-\acrshort{dmft} equations]\label{thm:LocalUniquenessMFDiscretized}
Let $\MatsubaraFrequencyCutoff \in \Integers$. There exist constants $\eta_{\MatsubaraFrequencyCutoff} > 0$ and $t_{\MatsubaraFrequencyCutoff}>0$ depending only on $\MatsubaraFrequencyCutoff$, such that if the Hubbard graph $\HubbardGraph$  and the parameters $t,u \in \RealNumbers$ satisfy
\begin{equation}\label{eq:AssumptionExistenceTheorem}
0 \le t<t_{\MatsubaraFrequencyCutoff}/\sqrt{\deg(\HubbardGraph)},
\end{equation}
\begin{equation}\label{eq:AssumptionUniquenessTheorem}
    t^2 u^2 \mathrm{deg}(\HubbardGraph)  <  \eta_{\MatsubaraFrequencyCutoff},
\end{equation}
then the dimensionless MaF-discretized IPT-DMFT equations \eqref{eq:NondimensionalBU}--\eqref{eq:NondimensionalIPT} have a unique solution
$(\HybridizationVector_\star,\SelfEnergyVector_\star)$ in $\HybridizationSpaceDiscretized  \times  u^2 F_{\MatsubaraFrequencyCutoff}\left(\HybridizationSpaceDiscretized\right)$,
where $\HybridizationSpaceDiscretized$ is defined in~\eqref{eq:HybridizationSpaceDiscretized}.
Moreover, the sequence $\left(\HybridizationVector^{(k)}\right)_{k \in \Integers}$ generated by the simple fixed-point iteration scheme
\begin{equation}\label{eq:IterativeSchemeEquation}
\HybridizationVector^{(0)} \in \HybridizationSpaceDiscretized, \quad \forall k \in \Integers, \quad \HybridizationVector^{(k+1)}= \DMFTmapDiscretized(\HybridizationVector^{(k)})
\end{equation}
where $\DMFTmapDiscretized$ is defined by
\begin{equation}\label{eq:DMFTMapDefinition}
\forall n \in \IntSubSet{0}{\MatsubaraFrequencyCutoff}, \quad    [\DMFTmapDiscretized(\HybridizationVector)]_n=t^2 w^T \left(i2(n+1)\pi - t h^0_\perp -u^2[F_{\MatsubaraFrequencyCutoff}(\HybridizationVector)]_n\right)^{-1}w,
\end{equation}
is well-defined and converges linearly to $\HybridizationVector_\star$.
\end{theorem}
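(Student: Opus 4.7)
My plan is to apply Banach's fixed-point theorem to the iteration map $\DMFTmapDiscretized$ defined in~\eqref{eq:DMFTMapDefinition} restricted to the complete (closed, bounded, convex) metric space $\HybridizationSpaceDiscretized$. Under assumption~\eqref{eq:AssumptionExistenceTheorem}, the proof of Theorem~\ref{thm:GlobalExistenceMFDiscretized} already establishes that $\DMFTmapDiscretized$ is well-defined, continuous and maps $\HybridizationSpaceDiscretized$ into itself. All that is left is to show that, under the additional smallness condition~\eqref{eq:AssumptionUniquenessTheorem}, $\DMFTmapDiscretized$ is a strict contraction: uniqueness in $\HybridizationSpaceDiscretized \times u^2 F_{\MatsubaraFrequencyCutoff}(\HybridizationSpaceDiscretized)$ and linear convergence of the iteration~\eqref{eq:IterativeSchemeEquation} will then follow automatically.

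The first step is a Lipschitz estimate for $\BathUpdateDiscretized$. Since $\BathUpdateDiscretized(\SelfEnergyVector)_n$ depends only on $\SelfEnergy_n$, the resolvent identity $A^{-1}-B^{-1}=A^{-1}(B-A)B^{-1}$ combined with the spectral decomposition $h^0_\perp=P\,\mathrm{diag}(\epsilon_k)P^\dagger$ gives
$$
\bigl|\BathUpdateDiscretized(\SelfEnergyVector)_n-\BathUpdateDiscretized(\tilde{\SelfEnergyVector})_n\bigr|\le t^{2}\sum_{k=1}^{L-1}\frac{|(wP)_k|^{2}}{|i(2n+1)\pi-t\epsilon_k-\SelfEnergy_n|\,|i(2n+1)\pi-t\epsilon_k-\tilde{\SelfEnergy}_n|}\,|\SelfEnergy_n-\tilde{\SelfEnergy}_n|.
$$
The key inequality $|i(2n+1)\pi-t\epsilon_k-\SelfEnergy_n|\ge(2n+1)\pi$ valid for $\SelfEnergy_n\in-\overline{\UpperHalfPlane}$ (the same bound used in Lemma~\ref{lemma:BoundednessBUMFDiscretized}, coming from $\Im(-\SelfEnergy_n)\ge0$), together with $\|w\|_2^{2}=\deg(\HubbardGraph)$, yields
$$
\bigl|\BathUpdateDiscretized(\SelfEnergyVector)_n-\BathUpdateDiscretized(\tilde{\SelfEnergyVector})_n\bigr|\le\frac{t^{2}\deg(\HubbardGraph)}{((2n+1)\pi)^{2}}\,|\SelfEnergy_n-\tilde{\SelfEnergy}_n|,\qquad \SelfEnergyVector,\tilde{\SelfEnergyVector}\in\LowerHalfPlaneVectors.
$$

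The second step is a Lipschitz estimate for $F_{\MatsubaraFrequencyCutoff}$ on $\LowerHalfPlaneVectors$. Each summand in~\eqref{eq:FunctionCriticalRadius} is a product of three factors $(i(2n_j+1)\pi-z_{n_j})^{-1}$ whose moduli are bounded by $1/((2|n_j|+1)\pi)\le 1/\pi$ by the same half-plane argument. Differentiating such a product in the real and imaginary parts of $z_k$ (remembering that, via the convention $z_{-(k+1)}=\overline{z_k}$, each real unknown enters at most the positions $n_j=k$ and $n_j=-(k+1)$ in the summation), one of the three factors gets squared in modulus, so the contribution of each summand to any partial derivative is bounded by a product of the form $1/\pi^{4}$. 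Summing over the finitely many triples $(n_1,n_2,n_3)\in\{-(\MatsubaraFrequencyCutoff+1),\ldots,\MatsubaraFrequencyCutoff\}^{3}$ with $n_1+n_2+n_3=n-1$ produces an $\ell^\infty$ Lipschitz constant
$$
\|F_{\MatsubaraFrequencyCutoff}(\HybridizationVector)-F_{\MatsubaraFrequencyCutoff}(\tilde{\HybridizationVector})\|_{\infty}\le L_{\MatsubaraFrequencyCutoff}\,\|\HybridizationVector-\tilde{\HybridizationVector}\|_{\infty}
$$
valid on all of $\LowerHalfPlaneVectors$, with a combinatorial constant $L_{\MatsubaraFrequencyCutoff}>0$ depending only on $\MatsubaraFrequencyCutoff$.

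Combining the two estimates and taking the $\ell^\infty$ norm, $\DMFTmapDiscretized=\BathUpdateDiscretized\circ(u^{2}F_{\MatsubaraFrequencyCutoff})$ is Lipschitz on $\HybridizationSpaceDiscretized$ with constant at most $t^{2}u^{2}\deg(\HubbardGraph)L_{\MatsubaraFrequencyCutoff}/\pi^{2}$. Setting $\eta_{\MatsubaraFrequencyCutoff}:=\pi^{2}/L_{\MatsubaraFrequencyCutoff}$ and keeping the $t_{\MatsubaraFrequencyCutoff}$ from Theorem~\ref{thm:GlobalExistenceMFDiscretized}, assumption~\eqref{eq:AssumptionUniquenessTheorem} makes this constant strictly less than $1$. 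Banach's fixed-point theorem applied to the self-map $\DMFTmapDiscretized:\HybridizationSpaceDiscretized\to\HybridizationSpaceDiscretized$ then delivers a unique fixed point $\HybridizationVector_\star\in\HybridizationSpaceDiscretized$ and the linear convergence of~\eqref{eq:IterativeSchemeEquation}; the pair $(\HybridizationVector_\star,\SelfEnergyVector_\star)$ with $\SelfEnergyVector_\star=u^{2}F_{\MatsubaraFrequencyCutoff}(\HybridizationVector_\star)$ is the unique solution of~\eqref{eq:NondimensionalBU}--\eqref{eq:NondimensionalIPT} in the prescribed product set. The main obstacle is the second step: the convention $z_{-(n+1)}=\overline{z_n}$ entangles holomorphic and anti-holomorphic dependence in $F_{\MatsubaraFrequencyCutoff}$, so care is required in the bookkeeping of how each real unknown appears in the multiple summands of~\eqref{eq:FunctionCriticalRadius} in order to extract a clean $L_{\MatsubaraFrequencyCutoff}$ depending only on $\MatsubaraFrequencyCutoff$; once this is done, the rest follows a standard contraction-mapping template.
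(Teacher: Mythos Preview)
Your proposal is correct and follows essentially the same approach as the paper: both argue via the Banach fixed-point theorem on $\HybridizationSpaceDiscretized$, both obtain the identical Lipschitz estimate for $\BathUpdateDiscretized$ through the resolvent identity and the diagonalization of $h^0_\perp$ (this is the paper's Lemma~\ref{lemma:BUEstimate}), and both define $\eta_{\MatsubaraFrequencyCutoff}=\pi^2/L_{\MatsubaraFrequencyCutoff}$. The only minor difference is in how $L_{\MatsubaraFrequencyCutoff}$ is obtained: the paper simply sets $L_{\MatsubaraFrequencyCutoff}=\sup\{\|\nabla F_{\MatsubaraFrequencyCutoff}(z)\|_\infty : z\in\HybridizationSpaceDiscretized\}$ and uses smoothness on a compact set, whereas you bound each factor $|(i(2n_j+1)\pi-z_{n_j})^{-1}|\le 1/\pi$ explicitly to get a global Lipschitz constant on all of $\LowerHalfPlaneVectors$; your variant is slightly more constructive and makes it transparent that $L_{\MatsubaraFrequencyCutoff}$ depends only on $\MatsubaraFrequencyCutoff$.
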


The proof is based on the Picard fixed-point theorem on $\HybridizationSpaceDiscretized$, for $t$ small enough. We endow here $\HybridizationSpaceDiscretized$ with the distance induced by the maximum norm $\Norm{\cdot}_\infty$. This is an arbitrary choice that suffices to make $\DMFTmapDiscretized$ a contracting map on $\HybridizationSpaceDiscretized$ for the range of parameters $(t,u)$ defined by \eqref{eq:AssumptionExistenceTheorem}-\eqref{eq:AssumptionUniquenessTheorem}. 

\begin{proof}[Proof of Theorem~\ref{thm:LocalUniquenessMFDiscretized}]

The first ingredient of the proof is an estimate on the bath-update map.

\begin{lemma}\label{lemma:BUEstimate}
Given $\SelfEnergyVector^1,\SelfEnergyVector^2 \in \LowerHalfPlaneVectors$, the following estimate holds.
\begin{equation} \nonumber
|\BathUpdateDiscretized(\SelfEnergyVector^1)_n-\BathUpdateDiscretized(\SelfEnergyVector^2)_n| \leq  \frac{t^2\mathrm{deg}(\HubbardGraph)}{((2n+1)\pi)^2} |\SelfEnergy_n^1-\SelfEnergy_n^2|.
\end{equation}
\end{lemma}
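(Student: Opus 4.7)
The plan is to apply the resolvent identity componentwise. For $j=1,2$ and each $n \in \IntSubSet{0}{\MatsubaraFrequencyCutoff}$, let me set
\[
A_n^j := i(2n+1)\pi\, I_{L-1} - t h^0_\perp - \SelfEnergy_n^j\, I_{L-1},
\]
so that $\BathUpdateDiscretized(\SelfEnergyVector^j)_n = t^2\, w^T (A_n^j)^{-1} w$. The argument of Lemma~\ref{lemma:BoundednessBUMFDiscretized} already shows $A_n^j$ is invertible for $\SelfEnergyVector^j \in \LowerHalfPlaneVectors$. The first step is the identity
\[
(A_n^1)^{-1} - (A_n^2)^{-1} = (A_n^1)^{-1}(A_n^2 - A_n^1)(A_n^2)^{-1} = (\SelfEnergy_n^1 - \SelfEnergy_n^2)\,(A_n^1)^{-1}(A_n^2)^{-1},
\]
which gives
\[
\BathUpdateDiscretized(\SelfEnergyVector^1)_n - \BathUpdateDiscretized(\SelfEnergyVector^2)_n = t^2 (\SelfEnergy_n^1 - \SelfEnergy_n^2)\, w^T (A_n^1)^{-1}(A_n^2)^{-1} w.
\]

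The second step is to bound $|w^T (A_n^1)^{-1}(A_n^2)^{-1} w|$. Following the proof of Lemma~\ref{lemma:BoundednessBUMFDiscretized}, diagonalize the real symmetric matrix $h^0_\perp = P\,\mathrm{diag}(\eps_1,\ldots,\eps_{L-1})\,P^T$ with $P$ real orthogonal. Since the two resolvents share the same eigenbasis $P$, and $A_n^j$ acts diagonally in that basis with entries $i(2n+1)\pi - t\eps_k - \SelfEnergy_n^j$, the quadratic form expands as
\[
w^T (A_n^1)^{-1}(A_n^2)^{-1} w = \sum_{k=1}^{L-1} \frac{(P^T w)_k^2}{\bigl(i(2n+1)\pi - t\eps_k - \SelfEnergy_n^1\bigr)\bigl(i(2n+1)\pi - t\eps_k - \SelfEnergy_n^2\bigr)}.
\]
Because $\Im(\SelfEnergy_n^j) \le 0$ and $t\eps_k \in \R$, the imaginary part of each factor in the denominator is at least $(2n+1)\pi$, so
\[
\bigl|i(2n+1)\pi - t\eps_k - \SelfEnergy_n^j\bigr| \ge (2n+1)\pi.
\]
Using the triangle inequality for the sum together with Parseval's identity $\sum_k (P^T w)_k^2 = \|w\|_2^2 = \deg(\HubbardGraph)$ (the last equality using vertex-transitivity of $\HubbardGraph$, as in Lemma~\ref{lemma:BoundednessBUMFDiscretized}) yields
\[
\bigl|w^T (A_n^1)^{-1}(A_n^2)^{-1} w\bigr| \le \frac{\deg(\HubbardGraph)}{\bigl((2n+1)\pi\bigr)^2}.
\]

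Combining the two displayed bounds gives the announced estimate. There is no real obstacle here, as the argument is a straightforward recycling of the estimate used in Lemma~\ref{lemma:BoundednessBUMFDiscretized}; the only small care needed is to observe that the lower bound on the denominators survives the product because $\SelfEnergy_n^1$ and $\SelfEnergy_n^2$ both lie in $-\overline{\UpperHalfPlane}$, so their imaginary parts contribute with a favorable sign to $\Im\bigl(i(2n+1)\pi - \SelfEnergy_n^j\bigr)$.
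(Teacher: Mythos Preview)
Your proof is correct and follows essentially the same approach as the paper's: apply the resolvent identity componentwise, then diagonalize $h^0_\perp$ and bound each term using $|i(2n+1)\pi - t\eps_k - \SelfEnergy_n^j| \ge (2n+1)\pi$ together with $\|w\|_2^2=\deg(\HubbardGraph)$. You have simply written out in full the step the paper abbreviates as ``diagonalizing $h^0_\perp$ similarly as in the proof of Lemma~\ref{lemma:BoundednessBUMFDiscretized}''.
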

\begin{proof}
The resolvent identity yields, for all $n \in \IntSubSet{0}{\MatsubaraFrequencyCutoff}$,
\begin{align}
\BathUpdateDiscretized(\SelfEnergyVector^1)_n-\BathUpdateDiscretized(\SelfEnergyVector^2)_n= t^2\left(\SelfEnergy_n^2-\SelfEnergy_n^1\right) w^T\left(i(2n+1)\pi - \SelfEnergy_n^1 - th^0_\perp \right)^{-1} \left(i(2n+1)\pi- \SelfEnergy_n^2 -th^0_\perp \right)^{-1}w, \nonumber
\end{align}
and we have, diagonalizing $h^0_\perp$ similarly as in the proof of Lemma \ref{lemma:BoundednessBUMFDiscretized},
\begin{equation} \nonumber
\Modulus{w^T\left(i(2n+1)\pi - \SelfEnergy_n^1 - th^0_\perp \right)^{-1} \left(i(2n+1)\pi - \SelfEnergy_n^2 -th^0_\perp \right)^{-1}w} \leq \frac{\deg(\HubbardGraph)}{\left((2n+1)\pi\right)^2}, 
\end{equation}
which concludes the proof.
\end{proof}

In particular, Lemma \ref{lemma:BUEstimate} shows that $\BathUpdateDiscretized$ is Lipschitz continuous with Lipschitz constant $t^2\deg(\HubbardGraph)/\pi^2$ for the distance induced by the maximum norm. Now let $t_{\MatsubaraFrequencyCutoff}>0$ given by Theorem \ref{thm:GlobalExistenceMFDiscretized}. We know that under Assumption~\eqref{eq:AssumptionExistenceTheorem}, that is $t^2\deg(\HubbardGraph)<t_{\MatsubaraFrequencyCutoff}^2$, the compact set $\HybridizationSpaceDiscretized$ is invariant by the map $\DMFTmapDiscretized$. Moreover, the function $F_{\MatsubaraFrequencyCutoff}$ is smooth on $\HybridizationSpaceDiscretized\subset\LowerHalfPlaneVectors$, see Equation \eqref{eq:FunctionCriticalRadius}. Define $L_{\MatsubaraFrequencyCutoff}=\sup\{\Norm{\nabla F_{\MatsubaraFrequencyCutoff}(z)}_\infty\ ; \ z\in\HybridizationSpaceDiscretized \}$, which is then finite.
For all $\Hybridization_1,\Hybridization_2 \in \HybridizationSpaceDiscretized$, we have
\begin{align*}
\Norm{\DMFTmapDiscretized(\Hybridization_1)-\DMFTmapDiscretized(\Hybridization_2)}_\infty & = \Norm{\BathUpdateDiscretized(u^2F_{\MatsubaraFrequencyCutoff}(\Hybridization_1))-\BathUpdateDiscretized(u^2F_{\MatsubaraFrequencyCutoff}(\Hybridization_2))}_\infty   \\
&\leq \left(\frac{tu}{\pi}\right)^2 \mathrm{deg}(\HubbardGraph) \Norm{F_{\MatsubaraFrequencyCutoff}(\Hybridization_1)-F_{\MatsubaraFrequencyCutoff}(\Hybridization_2)}_\infty \\
&\leq \left(\frac{tu}{\pi}\right)^2 \mathrm{deg}(\HubbardGraph) L_{\MatsubaraFrequencyCutoff}\Norm{\Hybridization_1-\Hybridization_1}_\infty,
\end{align*}
and we conclude using Picard fixed-point theorem, with $\eta_{\MatsubaraFrequencyCutoff}=\pi^2/L_{\MatsubaraFrequencyCutoff}$. This constant is of course dependent on the choice of the distance.
\end{proof}

\medskip

\section{The case of bipartite systems}\label{sec:BipartiteSystems}

The purpose of this section is to investigate the special case of bipartite graphs $\HubbardGraph$. Examples of bipartite graphs considered in condensed matter physics include regular trees (also known as Bethe lattices~\cite{georges_dynamical_1996}), the Lieb lattice, and hypercubic lattices of various dimensionalities \cite{georges_dynamical_1996}. Since only {\em infinite} Bethe lattices are vertex-transitive, Hubbard models on Bethe lattices do not fall into the scope of our analysis, which is restricted to finite translation-invariant systems. On the other hand, our results apply to periodic supercell models built from Lieb or hypercubic lattices with an even number of sites in each dimension.

\subsection{Particle-hole symmetry}

As detailed in \cite{lieb_hubbard_2004}, the Hubbard model defined with a bipartite graph $\HubbardGraph$ inherits a specific \emph{particle-hole symmetry}.
More precisely, let $\HubbardVertices=\HubbardVertices_A\sqcup\HubbardVertices_B$ be a partition of the vertices such that 
$$
\{i,j\} \in \HubbardEdges \implies i \in \HubbardVertices_A \mbox{ and } j \in \HubbardVertices_B \mbox{ (or the converse)},
$$
and introduce the corresponding particle-hole transform $\mathcal{B}_{A,B}$ which is the Bogoliubov transform from $\mathcal{L}(\mathrm{Fock}(\OneParticleSpace))$ to itself, defined by
\begin{equation}
    \mathcal{B}_{A,B}(\AnnihilationOperator[i,\sigma])=\left\{
    \begin{matrix}
        \CreationOperator[i,\sigma] \quad &\text{if } i \in A, \\
        -\CreationOperator[i,\sigma] \quad &\text{if } i \in B.
    \end{matrix} \right.
\end{equation}
One can check that the Hubbard Hamiltonian $\HubbardHamiltonian$ (see \cite[Definition 2.5]{cances_mathematical_2024}) is modified under this transformation as
\begin{equation}\label{eq:ParticleHoleTransform}
    \mathcal{B}_{A,B}(\HubbardHamiltonian)=\HubbardHamiltonian + \OnSiteRepulsion[] (\Cardinal{\HubbardVertices}-\TotalNumberOperator),
\end{equation}
so that the grand canonical Hubbard Hamiltonian $\HubbardHamiltonian'=\HubbardHamiltonian-\ChemicalPotential\TotalNumberOperator$ is invariant under this transform for the half-filled case $\ChemicalPotential=\OnSiteRepulsion[]/2$, with $\ChemicalPotential$ denoting the chemical potential. We stick to this case when using the IPT approximation, as discussed more in details in \cite[Section 2.5.2]{cances_mathematical_2024}.

As a consequence, the Gibbs state is also invariant under this transformation: $\mathcal{B}_{A,B}(e^{-\StatisticalTemperature\HubbardHamiltonian'})=e^{-\StatisticalTemperature\HubbardHamiltonian'}$.
From this, one proves that the time-ordered one-body Green's function $G^{\mathrm{TO}}$ associated with the Gibbs state  $Z^{-1} e^{-\beta \HubbardHamiltonian'}$ is anti-hermitian $G^{\mathrm{TO}}=-G^{\mathrm{TO},\dagger}$: indeed, given $t\geq0$ and for all one-particle basis vector $\OneParticleState=\OneParticleState_{i,\sigma}, \text{with } i \in \HubbardVertices, \sigma=\uparrow,\downarrow, $, we have

\begin{align*}
    Z\HermitianProduct{\OneParticleState}{iG^\mathrm{TO}(t)\OneParticleState}&=\Trace\left(e^{-\StatisticalTemperature\HubbardHamiltonian'} e^{it\HubbardHamiltonian'}\AnnihilationOperator[\OneParticleState]e^{-it\HubbardHamiltonian'}\CreationOperator[\OneParticleState]\right)= \overline{\Trace\left(\mathcal{B}_{A,B}(\CreationOperator[\OneParticleState]e^{-it\HubbardHamiltonian'}\AnnihilationOperator[\OneParticleState]e^{it\HubbardHamiltonian'} e^{-\StatisticalTemperature\HubbardHamiltonian'})\right)} \\
    &=\overline{\Trace\left(e^{-\StatisticalTemperature\HubbardHamiltonian'} e^{it\HubbardHamiltonian'}\AnnihilationOperator[\OneParticleState]e^{-it\HubbardHamiltonian'}\CreationOperator[\OneParticleState] \right)} = Z\overline{\HermitianProduct{\OneParticleState}{iG^\mathrm{TO} \OneParticleState}}=-Z\HermitianProduct{\OneParticleState}{iG^{\mathrm{TO},\dagger}\OneParticleState},
\end{align*}
where we make use of the cyclicity of the trace and \eqref{eq:ParticleHoleTransform}. The result for $t<0$ can be proved in a similar way.

Therefore, its Generalized Fourier transform $G$ satisfies the following property: for all $z \in \UpperHalfPlane$,
\begin{equation}
    G(z)=-G(-\bar{z})^\dagger.
\end{equation}

From this, one infers that the Nevanlinna-Riesz measure of $G$ (so to say the spectral function, see \cite[Section 3]{cances_mathematical_2024}) is \emph{symmetric}, or equivalently, that the Green's function $G$ is anti-hermitian on the imaginary axis.

The following corollary ensures that the MaF discretization of the IPT-DMFT equations preserves this property in the following sense: as discussed in Section \ref{sec:IterativeSchemeMottTransiton}, one can extract an approximation $\mathbf{G}=(G_n)_{n \in \IntSubSet{0}{\MatsubaraFrequencyCutoff}} \in \LowerHalfPlaneVectors$ of the local Green's function $G_{1,1}$ at the lowest $(\MatsubaraFrequencyCutoff+1)$ Matsubara frequencies using the constitutive relation
\begin{equation}
 G_n\simeq  G(i\MatsubaraFrequency_n)=(i\MatsubaraFrequency_n-(\Hybridization(i\MatsubaraFrequency_n)+\SelfEnergy(i\MatsubaraFrequency_n)))^{-1}\simeq (i\MatsubaraFrequency_n - (\Hybridization_n+\SelfEnergy_n))^{-1},
\end{equation}
which is also purely imaginary.

\begin{corollary}[MaF-discretized preserves particle-hole symmetry]\label{cor:CoulsonRushbrooke}
Let $\HubbardGraph$ be a \emph{bipartite} vertex transitive graph. Assume that $t$ is small enough, in the sense that it verifies the assumption \eqref{eq:AssumptionExistenceTheorem}. Then the MaF-discretized IPT-DMFT map $\DMFTmapDiscretized$ defined in \eqref{eq:DMFTMapDefinition} preserves purely imaginary vectors:
\begin{equation}
    \HybridizationVector \in \PurelyImaginaryVectors\cap\HybridizationSpaceDiscretized  \implies \DMFTmapDiscretized(\HybridizationVector) \in \PurelyImaginaryVectors\cap\HybridizationSpaceDiscretized .
\end{equation}
In particular, if the assumptions \eqref{eq:AssumptionExistenceTheorem}-\eqref{eq:AssumptionUniquenessTheorem} of Theorem \ref{thm:LocalUniquenessMFDiscretized} are satisfied, the solution $(\HybridizationVector_\star,\SelfEnergyVector_\star )$ is purely imaginary, i.e. $\HybridizationVector_\star,\SelfEnergyVector_\star \in \PurelyImaginaryVectors$.
\end{corollary}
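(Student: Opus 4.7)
The plan is to check separately that each of the two building blocks of $\DMFTmapDiscretized$---namely $u^2 F_{\MatsubaraFrequencyCutoff}$ and $\BathUpdateDiscretized$---sends purely imaginary vectors to purely imaginary vectors, and then to conclude the uniqueness part by a closedness argument applied to the iteration \eqref{eq:IterativeSchemeEquation}. The only nontrivial step will be the stability of $\PurelyImaginaryVectors$ under $\BathUpdateDiscretized$, where the bipartite structure must be exploited through a suitable involution; the $F_{\MatsubaraFrequencyCutoff}$ part is purely algebraic.

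First I would observe that preservation under $F_{\MatsubaraFrequencyCutoff}$ does not use bipartiteness. If $\HybridizationVector \in \PurelyImaginaryVectors$, the reflection rule \eqref{eq:ReflectionHybridizationSElfEnergy} gives $\Hybridization_{-(n+1)} = \overline{\Hybridization_n} = -\Hybridization_n$, so every $\Hybridization_{n_j}$ appearing in \eqref{eq:FunctionCriticalRadius} lies on the imaginary axis. Each factor $(i(2n_j+1)\pi - \Hybridization_{n_j})^{-1}$ is then purely imaginary, and since a product of three purely imaginary numbers is purely imaginary, every summand---and hence $F_{\MatsubaraFrequencyCutoff}(\HybridizationVector)_n$---belongs to $i\RealNumbers$.

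For $\BathUpdateDiscretized$, I would use the decomposition $\HubbardVertices = \HubbardVertices_A \sqcup \HubbardVertices_B$, for which vertex-transitivity forces $\Cardinal{\HubbardVertices_A} = \Cardinal{\HubbardVertices_B}$. Assume without loss of generality that vertex $1 \in \HubbardVertices_A$ and order the remaining vertices so that $\HubbardVertices_A \setminus \{1\}$ comes first, then $\HubbardVertices_B$. The absence of intra-class edges in $\AdjacencyMatrix$ then forces
$$
w = \begin{pmatrix} 0 \\ w_B \end{pmatrix}, \qquad h^0_\perp = \begin{pmatrix} 0 & M \\ M^T & 0 \end{pmatrix},
$$
and the diagonal involution $P = \begin{pmatrix} I & 0 \\ 0 & -I \end{pmatrix}$ (with blocks of sizes $\Cardinal{\HubbardVertices_A}-1$ and $\Cardinal{\HubbardVertices_B}$) satisfies $P^2 = I$, $Ph^0_\perp P = -h^0_\perp$ and $Pw = -w$. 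Given $\SelfEnergyVector \in \PurelyImaginaryVectors$, set $K_n := i(2n+1)\pi\, I - th^0_\perp - \SelfEnergy_n I$. Reality of $h^0_\perp$ together with $\overline{\SelfEnergy_n} = -\SelfEnergy_n$ give $\overline{K_n} = -i(2n+1)\pi\, I - th^0_\perp + \SelfEnergy_n I$, while the involution relations give $PK_nP = i(2n+1)\pi\, I + th^0_\perp - \SelfEnergy_n I$, whence $PK_nP = -\overline{K_n}$ and therefore $P K_n^{-1} P = -\overline{K_n^{-1}}$. Since $w$ is real and $Pw = -w$,
$$
\overline{w^T K_n^{-1} w} \;=\; w^T \overline{K_n^{-1}} w \;=\; -(Pw)^T K_n^{-1} (Pw) \;=\; -w^T K_n^{-1} w,
$$
so $\BathUpdateDiscretized(\SelfEnergyVector)_n = t^2 w^T K_n^{-1} w \in i\RealNumbers$, as desired.

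Combining these two preservation statements with Theorem~\ref{thm:GlobalExistenceMFDiscretized} (which under \eqref{eq:AssumptionExistenceTheorem} ensures that $\DMFTmapDiscretized$ stabilizes $\HybridizationSpaceDiscretized$) shows that $\DMFTmapDiscretized$ stabilizes the closed subset $\PurelyImaginaryVectors \cap \HybridizationSpaceDiscretized$, which is the first claim. For the second claim, under \eqref{eq:AssumptionExistenceTheorem}--\eqref{eq:AssumptionUniquenessTheorem} the iteration \eqref{eq:IterativeSchemeEquation} initialized at $\HybridizationVector^{(0)} = 0 \in \PurelyImaginaryVectors \cap \HybridizationSpaceDiscretized$ remains in this closed subset at every step and, by Theorem~\ref{thm:LocalUniquenessMFDiscretized}, converges to the unique fixed point $\HybridizationVector_\star$; closedness of $\PurelyImaginaryVectors$ then yields $\HybridizationVector_\star \in \PurelyImaginaryVectors$, and the first step gives $\SelfEnergyVector_\star = u^2 F_{\MatsubaraFrequencyCutoff}(\HybridizationVector_\star) \in \PurelyImaginaryVectors$.
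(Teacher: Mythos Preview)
Your proof is correct. The overall architecture---showing that $F_{\MatsubaraFrequencyCutoff}$ and $\BathUpdateDiscretized$ each preserve purely imaginary vectors, then passing to the limit of the iteration started at a purely imaginary point---matches the paper's. The difference lies in how you handle the $\BathUpdateDiscretized$ step. The paper first rewrites $\BathUpdateDiscretized(\SelfEnergyVector)_n$ via the Schur complement as $iX_n - \big(((iX_n - t\AdjacencyMatrix)^{-1})_{1,1}\big)^{-1}$, then diagonalizes the full adjacency matrix $\AdjacencyMatrix$ using the bipartite involution $J\AdjacencyMatrix J = -\AdjacencyMatrix$, and finally computes the $(1,1)$ entry of the resolvent explicitly as a sum over paired eigenvalues to see that it lies in $i\RealNumbers$. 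You instead stay with the reduced data $(h^0_\perp,w)$, introduce the corresponding involution $P$ on $\RealNumbers^{L-1}$ satisfying $Ph^0_\perp P = -h^0_\perp$ and $Pw = -w$, and conclude by the conjugation identity $PK_n^{-1}P = -\overline{K_n^{-1}}$. Your route is shorter and avoids both the Schur complement detour and any explicit spectral computation; the paper's route, while longer, has the side benefit of producing an explicit formula for the resolvent entry in terms of the eigenvalues of $\AdjacencyMatrix$. A small aside: your remark that vertex-transitivity forces $\Cardinal{\HubbardVertices_A}=\Cardinal{\HubbardVertices_B}$ is true (by regularity and double counting of edges) but is not actually used in your argument---the properties $Ph^0_\perp P = -h^0_\perp$ and $Pw = -w$ hold regardless of the block sizes.
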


This result is not inherent to this discretization, as a similar result can be derived for the original IPT-DMFT equations. Indeed, the IPT-DMFT map introduced in \cite[Section 3]{cances_mathematical_2024} preserves symmetric measures when the Hubbard graph is bipartite.

\begin{proof}
Let $\HybridizationVector \in \PurelyImaginaryVectors$. One can easily check that $\SelfEnergyVector=u^2F_{\MatsubaraFrequencyCutoff}(\HybridizationVector)$ is purely imaginary. If, furthermore, $\HybridizationVector\in\HybridizationSpaceDiscretized$, with $t$ small enough to satisfy the assumption \eqref{eq:AssumptionExistenceTheorem}, then it has been shown in the proof of Theorem \ref{thm:GlobalExistenceMFDiscretized} that $\SelfEnergyVector\in\LowerHalfPlaneVectors$, so that  $\SelfEnergyVector\in\PurelyImaginaryVectors$. Suppose now that $\HybridizationVector \in \PurelyImaginaryVectors\cap\HybridizationSpaceDiscretized$.

Let $X_n \in \RealNumbers$ be given by $iX_n=i(2n+1)\pi - \SelfEnergy_n$ for $n \in \IntSubSet{0}{\MatsubaraFrequencyCutoff}$. Using the Schur complement methods, we obtain
\begin{equation}
    \DMFTmapDiscretized(\HybridizationVector)_n = \BathUpdateDiscretized(\SelfEnergyVector)_n = iX_n - \left( \left( \left(iX_n- t\AdjacencyMatrix\right)^{-1}\right)_{1,1}\right)^{-1}.
\end{equation}
Now, since $\HubbardGraph$ is bipartite \cite{brouwer2011spectra}, the associated adjacency matrix $\AdjacencyMatrix$ reads
\begin{equation}
    \AdjacencyMatrix=\left(\begin{matrix}
        0& a^T \\
        a & 0 
    \end{matrix} \right)
\end{equation}
and letting $ J=\begin{pmatrix} 1 & 0 \\ 0 & -1 \end{pmatrix}$, we have $J \AdjacencyMatrix J=-\AdjacencyMatrix$.
This implies that $x$ is an eigenvector of $\AdjacencyMatrix$ with eigenvalue $\lambda$ if and only if $Jx$ is also an eigenvector with eigenvalue $-\lambda$. The matrix $\AdjacencyMatrix$ being symmetric real, it is therefore diagonalizable as follow:
\begin{equation}\label{eq:DiagonalizationBipartite}
    \AdjacencyMatrix=PDP^t \text{ with } \left\lbrace\begin{matrix}
        &D=\mathrm{diag}(\lambda_1,\ldots \lambda_p,-\lambda_1,\ldots, -\lambda_p, 0,\ldots,0), \\
        &P=(x_1|\ldots|x_p|Jx_1|\ldots|Jx_p|\ldots).
    \end{matrix}\right.
\end{equation}
Using this diagonalization, we have
\begin{align}
    \left(\left(iX_n-\AdjacencyMatrix\right)^{-1}\right)_{1,1}&=\sum_{j=1}^p \frac{P_{1,j}^2}{iX_n - \lambda_j} + \sum_{j=1}^p \frac{P_{1,j+p}^2}{iX_n+\lambda_j} + \sum_{j=2p+1}^L \frac{P_{1,j}^2}{iX_n} \\
    &=i \left(\sum_{j=1}^p \frac{2P_{1,j}^2X_n}{X_n^2+\lambda_j^2} -\sum_{j=2p+1}^{L}\frac{P_{1,j}^2}{X_n}\right) \in i\RealNumbers,
\end{align}
making use of the fact for all $j \in \IntSubSet{1}{p}, P_{1,j}=x_{j,1}=(JxS_j)_1=P_{1,j+p}$, proving thereby the first claim. Now, if the assumptions \eqref{eq:AssumptionExistenceTheorem}-\eqref{eq:AssumptionUniquenessTheorem} of Theorem \ref{thm:LocalUniquenessMFDiscretized} are satisfied, the sequence $(\HybridizationVector^{(k)})_{k \in \Integers} \in \PurelyImaginaryVectors$ defined by
\begin{equation}
    \HybridizationVector^{(0)} \in \PurelyImaginaryVectors \cap \HybridizationSpaceDiscretized, \quad \forall k \in \Integers,\ \HybridizationVector^{(k+1)}=\DMFTmapDiscretized(\HybridizationVector^{(k)})
\end{equation}
converges toward the unique solution $\HybridizationVector_*$, therefore $\HybridizationVector_* \in \PurelyImaginaryVectors$, and the same holds for the self-energy: $\SelfEnergyVector_* \in \PurelyImaginaryVectors$.
\end{proof}

\subsection{Algebraic structure of MaF-discretized IPT-DMFT for generic bipartite systems}

In this section, we characterize the purely imaginary and physically admissible solutions to the dimensionless MaF-discretized IPT-DMFT equations \eqref{eq:NondimensionalBU}--\eqref{eq:NondimensionalIPT} for bipartite systems. After showing that these equations boil down to a set of sparse algebraic equations for generic bipartite systems, we investigate more in depth the case of the Hubbard dimer.

Seeking the purely imaginary, physically admissible solutions to  \eqref{eq:NondimensionalBU}--\eqref{eq:NondimensionalIPT} amounts to seeking solutions of the form $\HybridizationVector=(-i\pi y_0, -i 3\pi y_1, \cdots , -i(2\MatsubaraFrequencyCutoff+1)\pi y_{\MatsubaraFrequencyCutoff})$, with $\bm y = (y_0,\cdots, y_{\MatsubaraFrequencyCutoff}) \in \R_+^{\MatsubaraFrequencyCutoff+1}$. It is convenient to rewrite this problem in terms of the auxiliary variables 
$$
\bm x = (x_0,\cdots, x_{\MatsubaraFrequencyCutoff}) \quad \mbox{where} \quad x_n := \frac{1}{1+y_n} \quad \mbox{for all } 0 \le n \le \MatsubaraFrequencyCutoff.
$$

For bipartite systems, the spectrum  of $h^0_\perp$ has a special structure. Moreover, removing a vertex from a bipartite graph does not change the bipartite character. The diagonalization is therefore the same as for the total adjacency matrix \eqref{eq:DiagonalizationBipartite}: if $L$ is even, the dimension of $h^0_\perp$ is odd, $0$ is an eigenvalue with odd multiplicity, and the nonzero eigenvalues come in pairs of real eigenvalues with opposite signs. If $L$ is odd, the dimension of $h^0_\perp$ is even and the eigenvalues come in pairs of real eigenvalues with opposite signs ($0$ can be an eigenvalue, but if it is, it has an even multiplicity). In addition if $\nu$ is a positive eigenvalue of $h^0_\perp$, then the orthogonal projectors verify $|\Pi_\nu v|=|\Pi_{-\nu}v|$, with $v=w/|w|$, that is $w=v\sqrt{\deg(\HubbardGraph)}$. Denoting $\Pi_0$ the orthogonal projector on $\ker h^0_\perp$, which is set to zero if $0\notin\sigma(h^0_\perp)$, it follows that \eqref{eq:NondimensionalBU}--\eqref{eq:NondimensionalIPT} can be rewritten as 
$$
\frac{1-x_n}{x_n} = a \deg(\HubbardGraph) \left( \sum_{\substack{\nu \in \sigma(h^0_\perp) \\ \nu > 0}}
\frac{2 |\Pi_\nu v|^2 \left((2n+1)^2- b P_{n,\MatsubaraFrequencyCutoff}(\bm x)\right)}{\left((2n+1)^2- b P_{n,\MatsubaraFrequencyCutoff}(\bm x)\right)^2+(2n+1)^2a \nu^2}
+  \frac{|\Pi_0 v|^2}{(2n+1)^2- b P_{n,\MatsubaraFrequencyCutoff}(\bm x)} \right), 
$$
where the parameters $a$ and $b$ are related to $t$ and $u$ by
$$
a=\frac{t^2}{\pi^2} \quad \mbox{and} \quad b=\frac{u^2}{\pi^4},
$$
and where the homogeneous cubic polynomials $P_{n,\MatsubaraFrequencyCutoff}$, $\MatsubaraFrequencyCutoff \in \N$, $0 \le n \le \MatsubaraFrequencyCutoff$, are given by
\begin{align*}
    P_{n,\MatsubaraFrequencyCutoff}(\bm x) = \! (2n+1) \bigg(& \mathds{1}_{n \ge 1} \!\!\!\! \sum_{\substack{n_1,n_2,n_3=0 \\ n_1 + n_2 + n_3 =n-1}}^{\MatsubaraFrequencyCutoff} \prod_{j=1}^3 \frac{1}{2n_j+1} x_{n_j} \\ &- 3 \!\!\!\!
    \sum_{\substack{n_1,n_2=0 \\ n \le n_1 + n_2 \le \MatsubaraFrequencyCutoff + n}}^{\MatsubaraFrequencyCutoff} \frac{1}{2n_1+1}  \frac{1}{2n_2+1}\frac{1}{2n_1+2n_2-2n+1}  x_{n_1}  x_{n_2} x_{n_1+n_2-n} \\
    & + 3{\mathds 1}_{n \le \MatsubaraFrequencyCutoff-1}  \!\!\!\!\!\!\!\!\! \sum_{\substack{n_1,n_2=0 \\ n_1 + n_2 \le \MatsubaraFrequencyCutoff -(n+1)}}^{\MatsubaraFrequencyCutoff} \!\!\!\!\! \frac{1}{2n_1+1} \frac{1}{2n_2+1}\frac{1}{2n_1+2n_2+2n+3}  x_{n_1}  x_{n_2} x_{n_1+n_2+n+1} \bigg) .
\end{align*}

The above equations read as a system of $(\MatsubaraFrequencyCutoff+1)$ scalar algebraic equations in the $(\MatsubaraFrequencyCutoff+1)$ scalar variables $(x_0,\dots,x_{\MatsubaraFrequencyCutoff})$. Each of these equations can be written as
\begin{equation}\label{eq:polynomial_equation_bipartite}
\sum_{j=0}^{2M+1} (c_{j,n,\MatsubaraFrequencyCutoff}x_n+d_{j,n,\MatsubaraFrequencyCutoff}) P_{n,\MatsubaraFrequencyCutoff}(\bm x)^j = 0,
\end{equation}
where $M$ is the number of distinct positive eigenvalues of $h^0_\perp$, and $c_{j,n,\MatsubaraFrequencyCutoff}$ and $d_{j,n,\MatsubaraFrequencyCutoff}$ are real coefficients, which are themselves polynomials in $a$ and $b$ (or equivalently $t$ and $u$). Note that $M$ is bounded by $\lfloor (L-1)/2 \rfloor$.  As the polynomials $P_{n,\MatsubaraFrequencyCutoff}$ are cubic, homogeneous, the total degree in $\bm x$ of each of these scalar equations is exactly $6M+4$ for any positive values of $a$ and $b$. As the number of monomials in each $P_{n,\MatsubaraFrequencyCutoff}$ is of the order of $\MatsubaraFrequencyCutoff^2$,  the number of monomials in \eqref{eq:polynomial_equation_bipartite} is of order at most $\MatsubaraFrequencyCutoff^{4M+2}$. For large values of $\MatsubaraFrequencyCutoff$, this number is small compared with the total number $\begin{pmatrix} \MatsubaraFrequencyCutoff+6M+5 \\ 6M+4 \end{pmatrix}\sim \MatsubaraFrequencyCutoff^{6M+4}$ of monomials in a generic polynomial equation of degree $(6M+4)$ with $(\MatsubaraFrequencyCutoff+1)$ variables.

\medskip

A bound on the number of solutions for fixed parameters $t,u$ can therefore be obtained using results from symbolic algebraic geometry, while the set of solutions can be explored numerically by homotopy methods from numerical algebraic geometry, although these methods are limited to relatively small values of $N_\omega$.

\subsection{Hubbard dimer}

\label{sec:HubbardDimer}

The Hubbard dimer is the simplest non trivial case. Here $L=2$ and the graph consists of two vertices and one edge. It is clearly bipartite, and we have $\AdjacencyMatrix=\begin{pmatrix}    0 & 1 \\ 1 & 0 \end{pmatrix}$, so that $w=1$ and $h^0_\perp=0$.

\subsubsection{Low-temperature regime for  $\MatsubaraFrequencyCutoff=0$}
\label{sec:Nomega=0}

In this section, we are interested in the simplest case $\MatsubaraFrequencyCutoff=0$, which corresponds to an approximation with only one Matsubara frequency: the unknown is a pair $(\Hybridization,\SelfEnergy) \in -\overline{\UpperHalfPlane}^2$. Combining~\eqref{eq:NondimensionalBU}--\eqref{eq:NondimensionalIPT} with~\eqref{eq:F0}, the problem reduces to finding a solution $\Hybridization \in -\overline{\UpperHalfPlane}$ to the scalar equation
\begin{equation} \label{eq:DimerNomega=0}
    \Hybridization=t^2 \left(i\pi-\frac{3u^2}{\vert i\pi-\Hybridization|^2(i\pi-\Hybridization)}\right)^{-1},
\end{equation}
As such, equation \eqref{eq:DimerNomega=0} can be seen as a polynomial equation in the real variables $(\Re(\Hybridization),\Im(\Hybridization))$ of degree 4.

We would like to investigate the solutions to this equation in the physical setting when the hopping parameter $\HoppingMatrix[]$ and the on-site interaction parameter $\OnSiteRepulsion[]$ are fixed, and the temperature $\StatisticalTemperature^{-1}$ goes to zero. In our dimensionless setting, this corresponds to 
$$
t,u \to + \infty\mbox{  with }   \frac{u}{t}=\frac{\OnSiteRepulsion[]}{\HoppingMatrix[]} =: \alpha > 0 \text{ fixed,}
$$
and it is therefore convenient to rewrite \eqref{eq:DimerNomega=0} as
\begin{equation} \label{eq:DimerNomega=0bis}
    \Hybridization=t^2 \left(i\pi-\frac{3\alpha^2 t^2}{\vert i\pi-\Hybridization|^2(i\pi-\Hybridization)}\right)^{-1}.
\end{equation}

\begin{theorem}
    The solutions to equation \eqref{eq:DimerNomega=0bis} are purely imaginary. The number of solutions that verify $\Hybridization\in-\overline{\ComplexNumbers_+}$ depends on the parameter $\alpha$. In the low-temperature ($\alpha > 0$ fixed, $t \to +\infty$), we have that
    \begin{itemize}
        \item if $0<\alpha<\frac{3\pi}2$, then \eqref{eq:DimerNomega=0bis} has a unique solution $\Hybridization_{\alpha,t}^\infty$ in $\overline{\ComplexNumbers_+}$ and it holds $i\Hybridization_{\alpha,t}^\infty\underset{t\to +\infty}{\longrightarrow}+\infty$;
        \item if $\alpha>\frac{3\pi}2$, then \eqref{eq:DimerNomega=0bis} has exactly three solutions in $\overline{\ComplexNumbers_+}$. One solution goes to infinity with $t$, and the other two solutions converge to distinct nonzero values $\Hybridization_\alpha^1,\Hybridization_\alpha^2\in i\RealNumbers_-$.
    \end{itemize}
    In each case, the branch of solutions escaping to infinity has the following asymptotic behavior:
    \begin{equation}
        \label{eq:AsymptoticBehavior}
        \Hybridization_{\alpha,t}^\infty = -i \frac{t^2}{\pi}+i\frac{3\pi\alpha^2}{t^2}+o\left( \frac{1}{t^{2}} \right).
    \end{equation}
\end{theorem}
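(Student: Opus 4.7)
The plan is to reduce \eqref{eq:DimerNomega=0bis} to a real polynomial equation in a single variable and then study its positive roots as $t\to+\infty$ with $\alpha$ fixed. Setting $\zeta := i\pi - \Hybridization$ and using $(|\zeta|^2\zeta)^{-1} = \bar\zeta/|\zeta|^4$, equation \eqref{eq:DimerNomega=0bis} is equivalent to the polynomial identity
\[
(i\pi - \zeta)\bigl(i\pi|\zeta|^4 - 3\alpha^2 t^2\,\bar\zeta\bigr) = t^2|\zeta|^4.
\]
Writing $\zeta = a+ib$ and taking the imaginary part will yield $\pi a(3\alpha^2 t^2 + |\zeta|^4) = 0$, which forces $a = 0$ since $\alpha t > 0$. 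Hence every solution is purely imaginary, and admissibility $\Hybridization \in -\overline{\UpperHalfPlane}$ gives $\Hybridization = -iy$ with $y \geq 0$. Substituting back and discarding the spurious factor corresponding to $\Hybridization = i\pi$, the real part then reduces to the quartic
\[
y\bigl[\pi(\pi+y)^3 + 3\alpha^2 t^2\bigr] = t^2(\pi+y)^3, \qquad y \geq 0,
\]
whose positive roots are in bijection with the admissible solutions of \eqref{eq:DimerNomega=0bis}.

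Next, I would construct the unbounded branch via dominant balance: forcing $y \sim t^2/\pi$ and plugging the ansatz $y = t^2/\pi + r$ into the quartic, the unbalanced first-order contribution $3\alpha^2 t^4/\pi$ is canceled exactly by
\[
r \;=\; -\frac{3\pi\alpha^2 t^4}{(\pi^2+t^2)^3} \;=\; -\frac{3\pi\alpha^2}{t^2} + o(1/t^2),
\]
yielding the asymptotic expansion \eqref{eq:AsymptoticBehavior}. For the bounded branches, dividing the quartic by $t^2$ and letting $t\to+\infty$ at fixed $y$ gives the limit cubic $(\pi+y)^3 = 3\alpha^2 y$. With $s = y/\pi$ and $\beta = 3\alpha^2/\pi^2$, this reads $f(s) := (1+s)^3 - \beta s = 0$. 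Since $f$ is convex on $(-1,+\infty)$ with $f(0) = 1 > 0$ and $f(+\infty)=+\infty$, and its critical point on $(0,+\infty)$ is $s^\star = \sqrt{\beta/3}-1$ (provided $\beta > 3$) with $f(s^\star) = \beta - \tfrac{2}{3\sqrt{3}}\beta^{3/2}$, a direct check gives $f(s^\star) < 0 \iff \beta > 27/4 \iff \alpha > 3\pi/2$. Hence the limit cubic has no positive root for $\alpha < 3\pi/2$ and two simple positive roots for $\alpha > 3\pi/2$, and the implicit function theorem extends each simple root to a branch of the quartic converging to it as $t\to+\infty$.

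The main obstacle will be to rule out any additional positive root of the quartic for $t$ large, so that the global count matches the statement. I would address this by rewriting the quartic as $h(y) = t^2$ with $h(y) := \pi y + 3\alpha^2 t^2 y/(\pi+y)^3$, whose derivative equals
\[
h'(y) \;=\; \pi + \frac{3\alpha^2 t^2(\pi-2y)}{(\pi+y)^4}.
\]
Clearly $h' > 0$ on $[0,\pi/2]$; on $(\pi/2,+\infty)$, the function $y \mapsto 3\alpha^2 t^2(2y-\pi)/[\pi(\pi+y)^4]$ is easily shown to increase from $0$ to a unique maximum at $y = \pi$ (of order $t^2$) before decreasing back to $0$ at infinity. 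Hence for $t$ large enough, $h'$ has exactly two positive zeros, so $h$ has one local maximum and one local minimum on $(0,+\infty)$ and the level set $\{h = t^2\}$ contains at most three points. Combining this with the asymptotic analysis above will yield exactly one positive root for $0 < \alpha < 3\pi/2$ and exactly three for $\alpha > 3\pi/2$, completing the proof.
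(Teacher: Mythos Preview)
Your proposal is correct and follows the same overall architecture as the paper: show all solutions are purely imaginary, reduce to a real quartic, study the limiting cubic as $t\to+\infty$, and identify one unbounded branch plus zero or two bounded branches according to whether $\alpha\lessgtr 3\pi/2$. The technical execution differs in two places worth noting.

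First, for the ``purely imaginary'' step, you take the imaginary part of the polynomial identity directly, which is the more elementary route. The paper instead sets $z_\epsilon=-i\pi^{-1}\Hybridization$, $\epsilon=t^{-1}$, and observes that any solution satisfies a real quadratic $\epsilon^2\pi^2 z^2 - b_\epsilon z - 1=0$ (with $b_\epsilon=\lambda/|z_\epsilon-1|^2+\epsilon^2\pi^2-1\in\RealNumbers$) whose discriminant $b_\epsilon^2+4\epsilon^2\pi^2$ is manifestly positive, forcing $z_\epsilon\in\RealNumbers$.

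Second, for the root count, the paper works with the quartic $(1+\epsilon^2\pi^2 z)(z-1)^3=\lambda z$ and tracks its four complex roots by continuity in $\epsilon$: the product of the roots is $-1/(\pi^2\epsilon^2)$, which forces exactly one branch to escape to infinity while the other three remain bounded and converge to the three roots of the limiting cubic $(z_0-1)^3=\lambda z_0$; it then checks which of these limit roots are real negative. This avoids any analysis of critical points. Your approach via the shape of $h(y)=\pi y+3\alpha^2 t^2 y/(\pi+y)^3$ also works, but your final ``combining'' step should be made explicit: any root in $(0,y_1)$ is bounded since $y_1\to\pi/2$, and any root in $(y_1,y_2)$ is $O(t^{2/3})$, whence $h$ there is $O(t^{2/3})\neq t^2$, so such roots are in fact bounded too. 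Both bounded root types must therefore converge to positive roots of the limit cubic, closing the argument.
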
h

\begin{proof}
Setting 
$$
\epsilon:=t^{-1}, \quad z_\epsilon:=-i\pi^{-1}\Hybridization,  \quad \lambda:= \frac{3\alpha^2}{\pi^2} > 0, 
$$
the problem consists in finding the solutions $z_\epsilon \in \C$ to the equation
\begin{equation} \label{eq:Nomega=0_2}
    z_\epsilon = \frac{(z_\epsilon-1) |z_\epsilon-1|^2}{\lambda - \epsilon^2\pi^2 (z_\epsilon-1) |z_\epsilon-1|^2}  , \quad \mbox{with} \quad {\rm Re}(z_\epsilon) \le 0,
\end{equation}
in the limit $\epsilon \to 0^+$.
Let us first study the limiting equation obtained by setting $\epsilon=0$, namely
\begin{equation} \label{eq:Nomega=0,epsilon=0}
    z_0 = \frac{(z_0-1) |z_0-1|^2}{\lambda}.
    \end{equation}
This equation can be rewritten as
\begin{equation}\label{eq:onz0}
z_0=\frac{|z_0-1|^2}{|z_0-1|^2-\lambda} \in \RealNumbers.
\end{equation}
This shows that any solution $z_0$ to \eqref{eq:Nomega=0,epsilon=0} is real and that any solution to \eqref{eq:Nomega=0,epsilon=0} is therefore a solution to the polynomial equation 
\begin{equation}\label{eq:polynomial_z0}
(z_0-1)^3=\lambda z_0.
\end{equation}
Thus, $x:=z_0-1$ is a real solution to $x^3+px+q=0$ with $p=q=-\lambda < 0$. The discriminant of this equation is $-\lambda^2(27-4\lambda)$. It follows that
\begin{itemize}
\item if $0 < \lambda < \frac{27}4$, then \eqref{eq:polynomial_z0} has one simple real root and two non real conjugated complex roots. For $\lambda=0$, the three roots are equal to $1$. When $\lambda$ increases, there are two branches of complex conjugated roots, and the third root is real and increases. In particular, it is positive;
\item if $\lambda = \frac{27}4$, then the roots of \eqref{eq:polynomial_z0}  are $z_0=4$ (simple root) and $z_0=-1/2$ (double root);
\item if $\lambda > \frac{27}4$, then \eqref{eq:polynomial_z0} has three distinct real roots, one positive greater than 1  as above, and the two other solutions $z_\lambda^1,z_\lambda^2$ are negative, because they behave continuously with $\lambda$, they converge to $-1/2$ when $\lambda\to 27/4$ and they do not cross zero, which is not a solution to \eqref{eq:polynomial_z0}.
\end{itemize}

\medskip

Consider now the case when $\epsilon > 0$.  Recall that we are looking for solutions to~\eqref{eq:Nomega=0_2}, which writes
\begin{equation}\label{eq:z_epsilon}
z_\epsilon = \frac{(z_\epsilon-1) |z_\epsilon-1|^2}{\lambda -  \epsilon^2 \pi^2 (z_\epsilon-1) |z_\epsilon-1|^2}, \quad \mbox{with} \quad {\rm Re}(z_\epsilon) < 0.
\end{equation}
Suppose $z_\epsilon$ is a solution to \eqref{eq:z_epsilon}. Let $b_\epsilon=\frac{\lambda}{|z_\epsilon-1|^2}+\epsilon^2\pi^2-1 \in \RealNumbers$. The discriminant of the quadratic polynomial equation $\epsilon^2 \pi^2 z^2 - b_\epsilon z -1 = 0$ is positive and $z_\epsilon$ is a root of this polynomial. Hence $z_\epsilon$ is real. We are then looking for real solutions to the quartic polynomial equation 
\begin{equation}\label{eq:z_epsilon_2}
 \left( 1+ \epsilon^2\pi^2 z_\epsilon \right) (z_\epsilon-1)^3 =  \lambda z_\epsilon.
\end{equation}
The above equation has four complex roots (counting multiplicities) which depend continuously on the parameter $\epsilon > 0$ (for fixed $\lambda$). As the product of the roots of \eqref{eq:z_epsilon_2} is equal to $- \frac 1{\pi^2\epsilon^2}$ and as the four roots of this polynomial are bounded away from zero uniformly for $0 < \epsilon \le 1$, there is a branch of solutions $\epsilon \mapsto z_{\lambda,\epsilon}^\infty$ that escapes to infinity when $\epsilon \to 0^+$, namely $|z_\epsilon^\infty|\underset{\epsilon\to 0^+}{\longrightarrow} \infty$. Then we necessarily have $(1+\epsilon^2\pi^2 z_{\lambda,\epsilon}^\infty) \to 0$ when $\epsilon \to 0^+$ because of \eqref{eq:z_epsilon_2}. We can thus write $z_{\lambda,\epsilon}^\infty= - \frac{y_\epsilon}{\epsilon^2\pi^2}$ with  $y_\epsilon$ satisfying $y_\epsilon \to 1$ when $\epsilon \to 0^+$ and solution to the quartic polynomial equation 
$$
1 - y_\epsilon = (\epsilon^2\pi^2)^2 \frac{\lambda y_\epsilon}{(y_\epsilon+\epsilon^2 \pi^2)^3} \underset{\epsilon \to 0^+}{\sim} \lambda \pi^4 \epsilon^4.
$$
We finally obtain
\begin{equation} \label{eq:zepsiloninfty}
z_{\lambda,\epsilon}^\infty = - \frac 1{\pi^2\epsilon^2} + \lambda \pi^2 \epsilon^2 + o(\epsilon^2).
\end{equation}
The four branches of solutions therefore behave as follows: one of them, namely $z_\epsilon^\infty$, goes to infinity when $\epsilon \to 0^+$ and behaves as \eqref{eq:zepsiloninfty}, while the other three remain in a compact set at positive distance from the origin, and converge when $\epsilon \to 0^+$ to the three roots to the limiting equation \eqref{eq:polynomial_z0}. We then have to discuss whether these three bounded roots are solutions to \eqref{eq:z_epsilon} or not, depending on the value of $\lambda$.
\begin{itemize}
    \item For $\lambda<27/4$, since the roots of \eqref{eq:polynomial_z0} are either non real or positive. Hence, for $\epsilon$ small enough, the bounded roots of \eqref{eq:z_epsilon_2} are either non real or positive as well, so that they are not real negative. Therefore, they are not solution to \eqref{eq:z_epsilon}. The only solution to \eqref{eq:z_epsilon} is then $z_{\lambda,\epsilon}^\infty$ for $\epsilon$ small enough, which is real, because else its complex conjugate would also be a solution to \eqref{eq:z_epsilon_2}.
    \item For $\lambda>27/4$, the roots of the limiting equation \eqref{eq:polynomial_z0} are three distinct real roots: $z_\lambda^1<z_\lambda^2<0$ and a third root greater than 1. Since the bounded roots of \eqref{eq:z_epsilon_2} are either real or complex conjugated and they converge to the roots of the limiting equation, for $\epsilon$ small enough, they must all be real. Moreover, two of them are real negative and bounded, one of them is positive bounded and the last one is $z_{\lambda,\epsilon}^\infty$, which is real for the same reason as above.
\end{itemize}
\end{proof}

\subsubsection{Purely imaginary solutions for the Hubbard dimer}

Let us conclude this section with the research of purely imaginary solutions in the general case $\MatsubaraFrequencyCutoff\geq 1$, still focusing on the simple case of the Hubbard dimer ($L=2$, $h^0_\perp=0$), for which \eqref{eq:polynomial_equation_bipartite} boils down to solving the system of quartic polynomial equations
\begin{equation}\label{eq:Hubbard-dimer_red}
(x_n-1) \left( (2n+1)^2 - b P_{n,\MatsubaraFrequencyCutoff}(\bm x) \right) + a x_n=0, \quad \forall\ 0 \le n \le \MatsubaraFrequencyCutoff.
\end{equation}
 Whenever $b >0$, that is $u> 0$, each of these polynomial equations contains a number of monomials exactly equal to $(M_{n,\MatsubaraFrequencyCutoff}+1)$, where $M_{n,\MatsubaraFrequencyCutoff}$ is the number of monomials in $P_{n,\MatsubaraFrequencyCutoff}$, so it scales in $O(\MatsubaraFrequencyCutoff^2)$. The system of quartic algebraic equations \eqref{eq:Hubbard-dimer_red} has thus a sparsity of order $\MatsubaraFrequencyCutoff^{-2}$ for large values of~$\MatsubaraFrequencyCutoff$. 

\medskip

As a matter of example of symbolic and numerical algebraic geometry methods, let us consider the simple case when $\MatsubaraFrequencyCutoff=1$. In this case, \eqref{eq:Hubbard-dimer_red} reads: find the solutions $(x_0,x_1)$ in $(0,1]^2$ to the system of quartic equations
\begin{align}\
& (x_0-1)(1-b P_{0,1}(x_0,x_1))+ a x_0=0, \label{eq:system_dimer_Nomega=1_0} \\
& (x_1-1)(9-b P_{1,1}(x_0,x_1))+ a x_1=0, \label{eq:system_dimer_Nomega=1_1}
\end{align}
with 
\begin{align*}
&P_{0,1}(x_0,x_1)= - 3 x_0^3 + x_0^2 x_1 - \frac 23 x_0 x_1^2, \\
&P_{1,1}(x_0,x_1)= 3 x_0^3 - 6 x_0^2 x_1 - \frac 13 x_1^3.
\end{align*}

The dimension of the algebraic variety defined as the set of solutions of \eqref{eq:system_dimer_Nomega=1_0}-\eqref{eq:system_dimer_Nomega=1_1} for a given set of parameters $(a,b) \in \R_+ \times \R_+$ can be computed using methods from symbolic algebraic geometry. The dimension returned by the {\tt Macaulay2} software \cite{M2} is $0$ showing that \eqref{eq:system_dimer_Nomega=1_0}-\eqref{eq:system_dimer_Nomega=1_1} has discrete complex solutions for fixed parameters $(a,b)$. These solutions can be approximated numerically using tools from numerical algebraic geometry. The homotopy methods implemented in the {\tt HomotopyContinuation.jl} Julia package~\cite{HomotopyContinuation.jl} indicate that in the range of parameters $(a,b) \in [0,10] \times [0,25]$, \eqref{eq:system_dimer_Nomega=1_0}-\eqref{eq:system_dimer_Nomega=1_1} have $16$ complex solutions and that the number of admissible solutions satisfying $(x_0,x_1) \in [0,1]^2$ varies from $1$ to $5$ (Figure~\ref{fig:homotopy}). The separation line $b=3(1+a/10)^3$ corresponds to solutions crossing the bottom boundary of the square $(0,1] \times (0,1]$. Indeed, the system \eqref{eq:system_dimer_Nomega=1_0}-\eqref{eq:system_dimer_Nomega=1_1} admits a solution of the form $(x_0,0)$ if and only if $b=3(1+a/10)^3$. The other separation lines correspond to pairs of conjugated solutions merging on the real line in both variables, so that the number of solutions on both sides differs by $2$.

\begin{figure}
    \centering
    \includegraphics[width=0.5\linewidth]{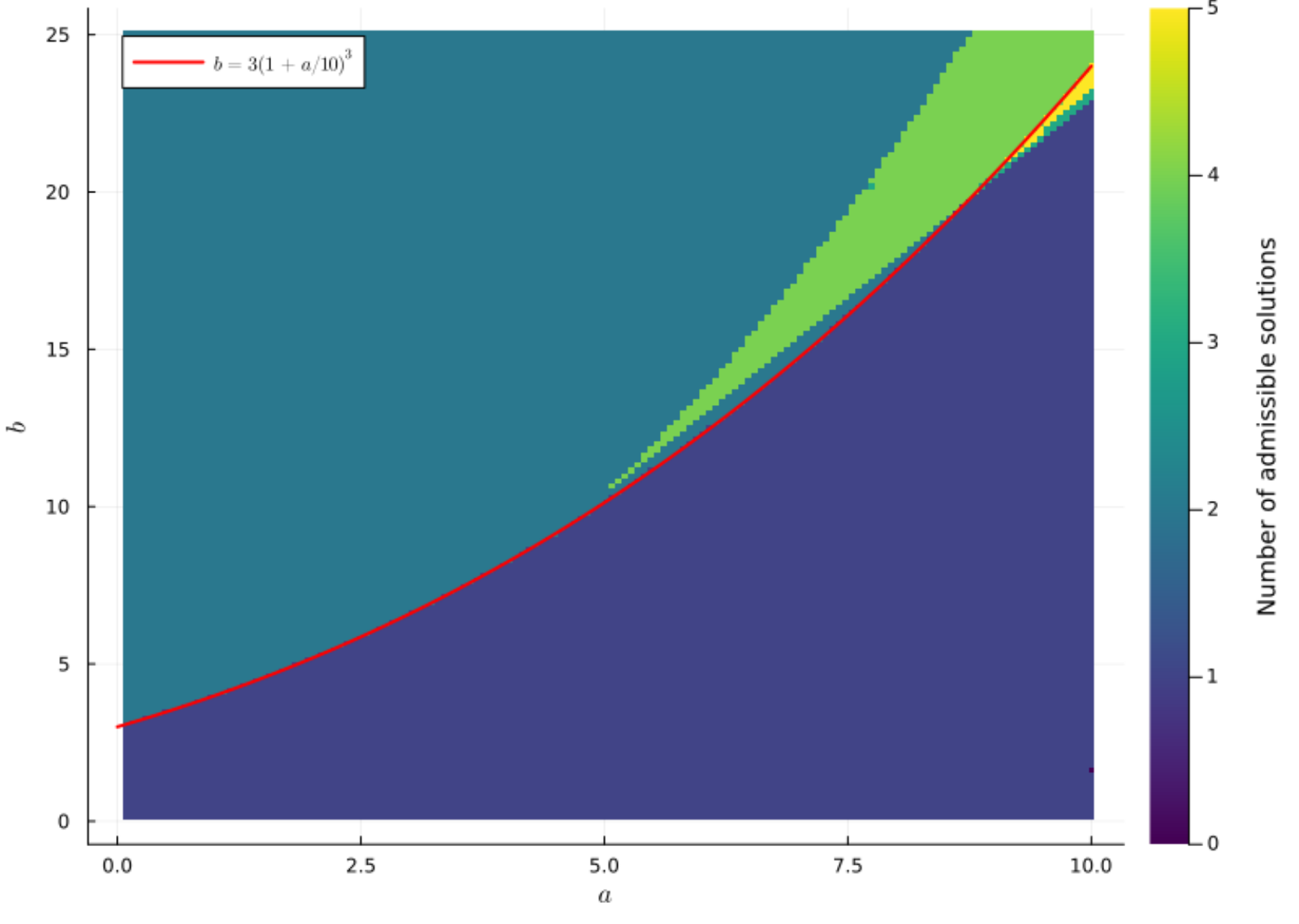}
    \caption{Number of admissible solutions $(x_0,x_1) \in (0,1]^2$ to \eqref{eq:system_dimer_Nomega=1_0}-\eqref{eq:system_dimer_Nomega=1_1} in the range of parameters $(a,b) \in [0,10] \times [0,25]$. This gives the number of purely imaginary solutions to the dimensionless MaF-discretized IPT-DMFT equations \eqref{eq:NondimensionalBU}--\eqref{eq:NondimensionalIPT} for the Hubbard dimer in the range of parameters $(t,u) \in [0,\sqrt{10}\pi] \times [0,5\pi^2]$.}
    \label{fig:homotopy}
\end{figure}

\section{Numerical simulations of the Hubbard dimer}\label{sec:IterativeSchemeMottTransiton}

In this section, we report numerical simulations of the MaF-discretized IPT-DMFT equations for the Hubbard dimer ($\HubbardVertices=\{1,2\}, \HubbardEdges=\{\{1,2\}\}$), the simplest non trivial bipartite and vertex-transitive graph.

In spite of its simplicity, this setting is enough to exhibit numerically an interaction-driven conductor-to-insulator transition, as illustrated in the next section. For this specific purpose, we use the Python/C++ library \texttt{TRIQS} 3.1.0 \cite{parcollet_triqs_2015}, and take advantage of its  analytic continuation functionalities.

Unfortunately, this library is not suited to perform simulations with a small number of Matsubara's frequencies ($\MatsubaraFrequencyCutoff$ needs to be larger than 50 for the ``tail-fitting" procedure to work properly).
In order to exemplify the results of Section \ref{sec:HubbardDimer}, we then switch to a Julia implementation with arbitrary finite-arithmetic precision for the rest of the analysis.

With this implementation, we first perform a series of simulations revealing that the simple fixed point algorithm may converge to a solution that do not fulfill the Pick criterion, meaning that there is no Pick function exactly interpolating the computed values at the lowest $(N_\omega+1)$ lowest Matsubara frequencies. Following on Remark \ref{rmk:AdmissibleSolutions}, this observation indicates that the MaF-discretization do not preserve the fact that $-\Hybridization,-\SelfEnergy$ are Pick functions \cite[Section 3]{cances_mathematical_2024}.

The section is organized as follows: in Section \ref{sec:TRIQS}, we perform a convergence analysis of the algorithm described in Theorem \ref{thm:LocalUniquenessMFDiscretized} using the \texttt{TRIQS} library, while Section \ref{sec:PickCriterion} is devoted to the verification of the Pick criterion for the local Green's function associated to the converged solution.

\subsection{Conductor-to-insulator transition using the \texttt{TRIQS} library}
\label{sec:TRIQS}

In this section, we present numerical evidence of a phase transition for the Hubbard dimer from conductor to insulator in the IPT-DMFT approximation, the simulations being performed using the \texttt{TRIQS} library.

Before going any further, let us specify the meaning of the terms conductor and insulators for the Hubbard dimer. The system is called 
a conductor if the Fermi level (zero in this setting) is in the support of the spectral function~\cite{martin_interacting_2016} , i.e. of the Nevanlinna-Riesz measure of $-G$, and an insulator otherwise~\cite{georges_dynamical_1996}.

To study the conduction properties of a system with the MaF-discretized IPT-DMFT, a strategy is therefore as follows:
\begin{enumerate}
    \item \textbf{Solve the MaF-discretized IPT-DMFT equations}. As a self-consistent method, DMFT equations are usually solved using \emph{an iterative scheme}: for the MaF-discretized IPT-DMFT setting, we implement the simple iterative scheme described in Theorem \ref{thm:LocalUniquenessMFDiscretized}, namely, for a given $N_\mathrm{iter} \in \Integers^{*}$, we compute sequences $\left(\HybridizationVector^{(k)}\right)_{k \in \IntSubSet{0}{N_{\mathrm{iter}}}}$ defined by
    \begin{equation}
        \HybridizationVector^{(0)} \in \HybridizationSpaceDiscretized, \quad \forall k \in \Integers, \quad \HybridizationVector^{(k+1)}= \DMFTmapDiscretized(\HybridizationVector^{(k)}),
    \end{equation}
    for various initial guesses $\HybridizationVector^{(0)}$.
    The number of iterations $N_\mathrm{iter}$ is chosen to ensure that for the considered set of parameters $\HoppingMatrix[],\OnSiteRepulsion[],\StatisticalTemperature$, $\HybridizationVector^{(N_\mathrm{iter})} \approx \HybridizationVector^{(\infty)}$ up to machine precision. We observe that the algorithm converges linearly for the two extreme initial guesses considered: $\HybridizationVector^{(0)}=0$ and $\HybridizationVector_n^{(0)}=1/(i\MatsubaraFrequency_n)$ (Figure \ref{fig:ConvergenceResidualHighT}). These initial values are chosen so that $\HybridizationVector^{(0)} \in \HybridizationSpaceDiscretized$ in both cases and are extreme in the sense that they belong to the boundary of $\HybridizationSpaceDiscretized$. We check that the solution is converged up to machine precision, the latter being fixed by the implementation to $10^{-16}$.
    
    \item \textbf{Compute the MaF-discretized IPT-DMFT local Green's function $\mathbf{\GreensFunction}$}. With the converged solution in the variable $\HybridizationVector$, we extract the associated self-energy $\SelfEnergyVector^{(\infty)}= U^2 \IPTmap_{\StatisticalTemperature,\MatsubaraFrequencyCutoff}(\HybridizationVector^{(\infty)})$. As already mentioned in Section \ref{sec:BipartiteSystems}, we can now extract an approximation $\mathbf{G} \in \LowerHalfPlaneVectors$ of the discretization of the local Green's function $G_{1,1}$, defined by
    \begin{equation}
        \forall n \in \IntSubSet{0}{\MatsubaraFrequencyCutoff},  \quad G_n=\left(i\MatsubaraFrequency_n-\Hybridization_n^{(\infty)}-\SelfEnergy_n^{(\infty)}\right)^{-1}.
    \end{equation}
    \item \textbf{Find an interpolation $G$ of $\mathbf{G}$ and extract the spectral function $A$.} Once a solution is found and that $\mathbf{G}$ is defined, we seek an analytic interpolation $G$ of $\mathbf{G}$ to the whole upper half-plane $\UpperHalfPlane$, namely we solve the following Nevanlinna-Pick interpolation problem \cite{nevanlinna_uber_1919,nicolau_interpolating_1994,nicolau_nevanlinna-pick_2015}
    \begin{equation}\label{eq:NumericalInterpolationProblem}
        \text{Find} -G:\UpperHalfPlane \to \overline{\UpperHalfPlane} \text{ analytic, such that, } \forall n \in \IntSubSet{0}{\MatsubaraFrequencyCutoff}, \quad G(i\MatsubaraFrequency_n)=G_n.
    \end{equation}
    Once $G$ is found (provided a solution to this problem exists, see the following section), we can get the values of the spectral function $A$ by taking the limit of $G$ on the real line, using the Stieljes inversion formula~\cite{gesztesy_matrixvalued_2000}: recall that for $f$ a Pick matrix given by, for all $z \in \UpperHalfPlane$,
    \begin{equation} \nonumber
    f(z)=\int_{\RealNumbers}\frac{1}{\eps-z}d\mu(\eps) 
    \end{equation}
    with $\mu$ a positive matrix-valued Borel finite measure, then we have for all $x,y \in \RealNumbers$,
    \begin{equation}\nonumber
    \frac{1}{2}\mu(\{x\})+ \frac{1}{2}\mu(\{y\}) + \mu((x,y)) = \frac{1}{\pi}\lim_{\eta \to 0^+} \int_x^y\Im(f(\eps+i\eta))d\eps.
    \end{equation}
    In particular, assuming the spectral function $A$ is absolutely continuous with respect to the Lebesgue measure (which seems to hold in the examples below), and letting $\rho$ the associated density, we have
    \begin{equation}
        \rho(\eps)=-\frac{1}{\pi} \lim_{\eta \to 0^+}\Im(\GreensFunction(\eps+i\eta)).
    \end{equation}
    In practice, several algorithms exist to perform this analytic continuation \cite{gubernatis_quantum_1991,jarrell_bayesian_1996,fei_nevanlinna_2021,fei_analytical_2021,huang_robust_2023}, which, although theoretically well founded, suffer from ill-conditioning issues \cite{gubernatis_quantum_1991,jarrell_bayesian_1996}.
    
    For the present purpose, we used the \texttt{set\_from\_pade} algorithm, which is based on Pade approximants, for the reason that it is the most widely used interpolation method in this application field. Given the MaF-discretized Green's function $\mathbf{G}$, it returns the values $\rho_{\RealNumbers}$ of an interpolation of the above points on a given set of points on the real line.
    The latter set is a regular mesh of $[\eps_\mathrm{min},\eps_\mathrm{max}]$ consisting in $N_\mathrm{mesh}$ points.

    Let us mention that, contrary to other methods developed recently in \cite{fei_nevanlinna_2021}, \texttt{set\_from\_pade} does not ensure \emph{a priori} that the analytic continuation is indeed valued in $-\overline{\UpperHalfPlane}$, and may lead to results with negative values of $\rho_\RealNumbers$. This issue is investigated in the next section.
\end{enumerate}

\paragraph{Discretization parameters} Discretization parameters are chosen such that the discretization error due to the integral performed in \eqref{eq:IPTMFDiscretized} can be considered as negligible: we always take a number of points $N_\tau$ associated to the discretization of $(0,\StatisticalTemperature)$ large compared to $\MatsubaraFrequencyCutoff$. Another reason for neglecting numerical quadrature errors is that this integral is actually performed using the \texttt{Fourier} method with tail-fitting (see the \texttt{TRIQS} web site for more details). For all the simulations to come, we took the following parameters:
\begin{equation} \nonumber
N_\tau=1000,\quad \MatsubaraFrequencyCutoff=100 \quad \left(\frac{N_\tau}{\MatsubaraFrequencyCutoff} =10 \right).
\end{equation}

We plot in Figure \ref{fig:MottTransition} the results of this algorithm for the following parameters:
\begin{equation}
\StatisticalTemperature=1,\quad  \HoppingMatrix[]=1, \quad N_{\mathrm{iter}}=50, \quad \HybridizationVector^{(0)}=\left(i\MatsubaraFrequency_n\right)^{-1}_{n \in \IntSubSet{0}{\MatsubaraFrequencyCutoff}},\quad -\eps_{\mathrm{min}}=\eps_{\mathrm{max}}=10, \quad N_\mathrm{mesh}=1000, \label{eq:ParametersMottTransition}
\end{equation}
while varying $\OnSiteRepulsion[]$ from 2 to 10. The big picture is similar to what is obtained for the so-called Bethe lattice \cite{georges_dynamical_1996} and some comments are in order:

\begin{figure}[H]
\centering
\includegraphics[scale=0.8]{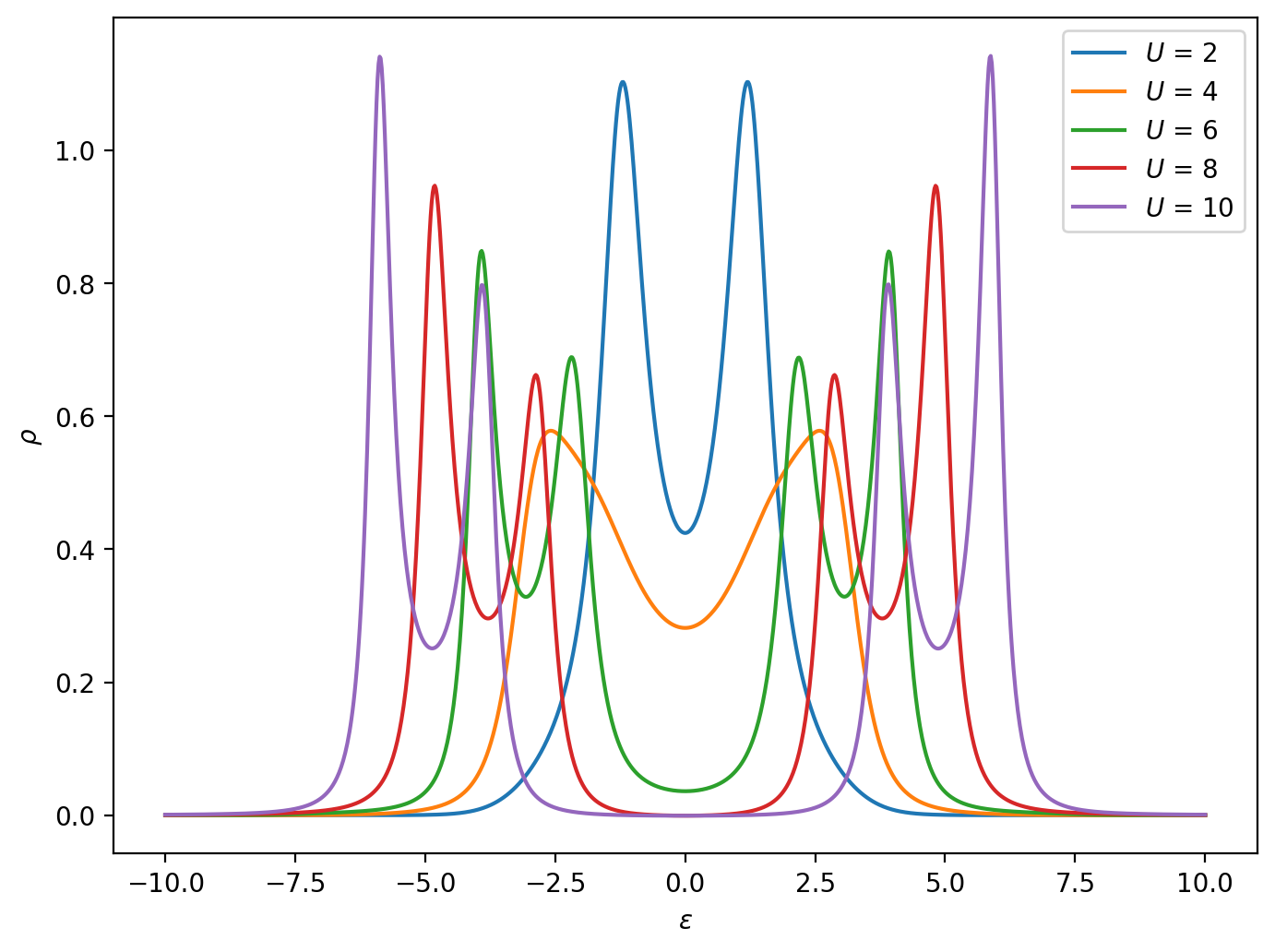}
\caption{Density $\rho$ of the spectral function $A$ obtained by analytic continuation using Pade approximants, for different values of the on-site repulsion $\OnSiteRepulsion[]$. Other parameters are fixed as in \eqref{eq:ParametersMottTransition}.}
\label{fig:MottTransition}
\end{figure}

\begin{itemize}
\item As $\OnSiteRepulsion[]$ increases, $\rho(0)$ decreases to reach approximately $0$ between $\OnSiteRepulsion[]=6$ and $8$: for $\OnSiteRepulsion[] < 6$, the system is a conductor according to the conductivity criterion provided in the introduction of Section \ref{sec:IterativeSchemeMottTransiton}, and an insulator for $\OnSiteRepulsion[]>8$.
\item The density $\rho$ is even, which is consistent with Corollary \ref{cor:CoulsonRushbrooke}: as the initialization $\HybridizationVector^{(0)} \in \PurelyImaginaryVectors$, the iterates are purely imaginary as well: $\forall k \in \Integers, \HybridizationVector^{(k)} \in \PurelyImaginaryVectors$. Note that we observe that this property also holds true at convergence, for non-purely imaginary initializations.
\end{itemize}

\begin{figure}[H]
\begin{subfigure}{0.5\textwidth}
\includegraphics[width=\textwidth]{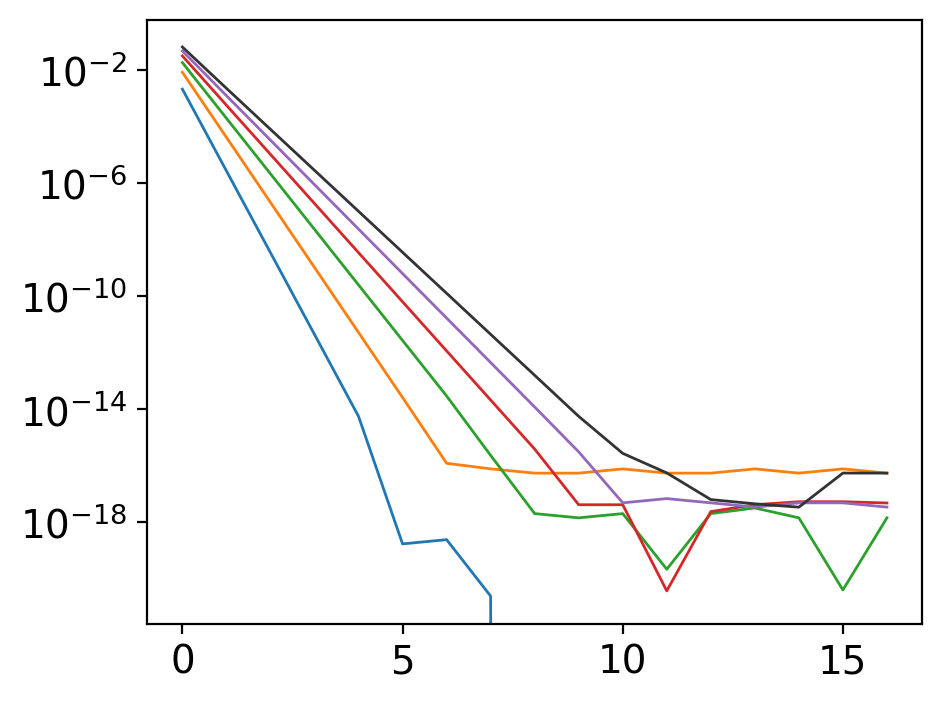}
\includegraphics[width=\textwidth]{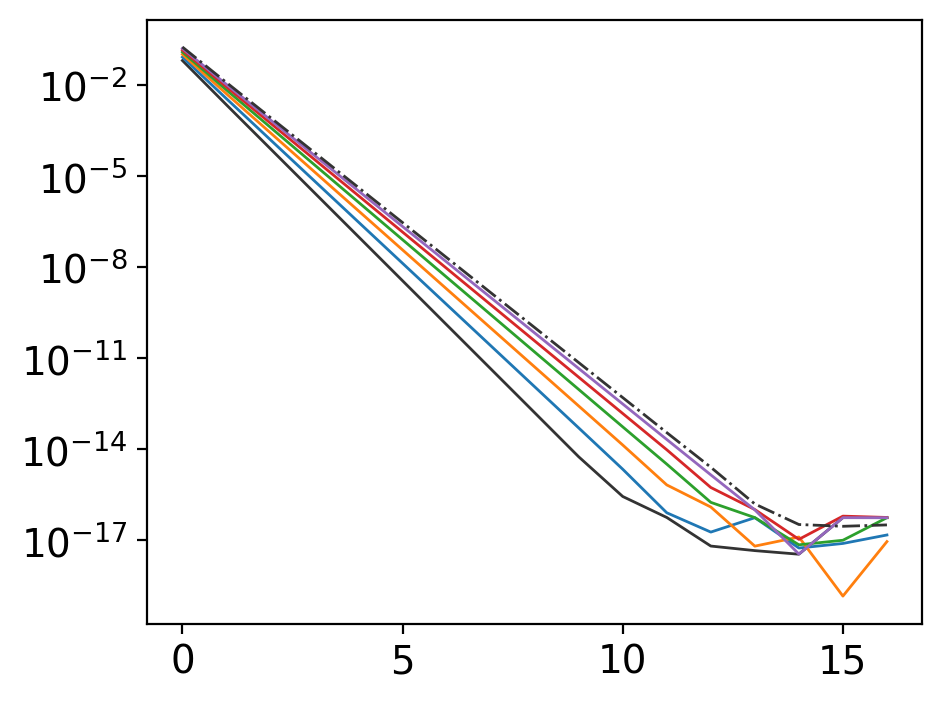}
\includegraphics[width=\textwidth]{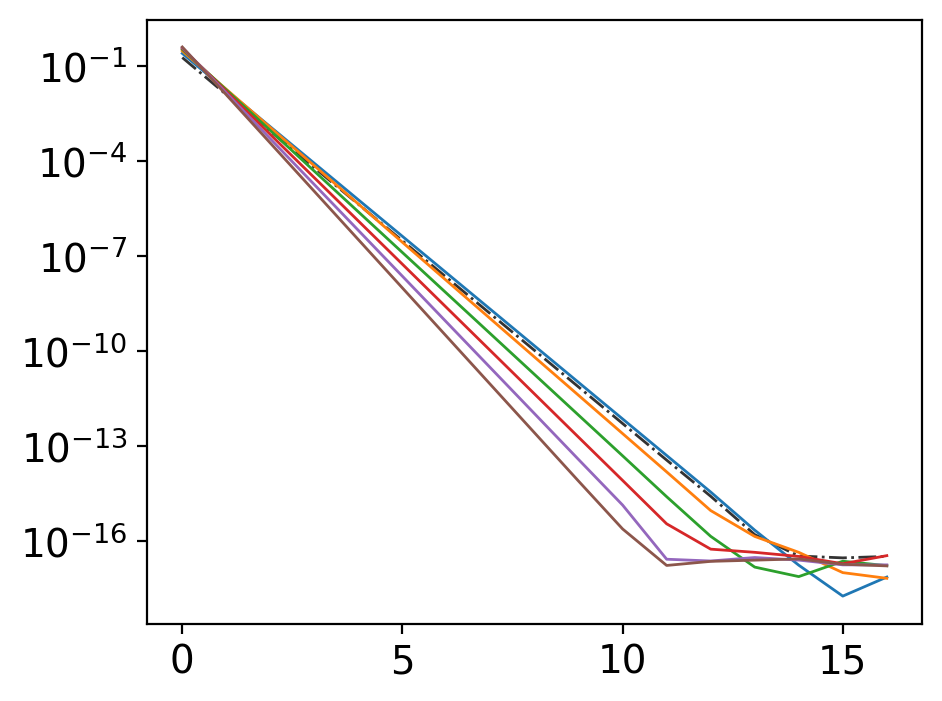}
\subcaption{Initial guess $\Hybridization^{(0)}=\left(i\MatsubaraFrequency_n\right)^{-1}_{n \in \IntSubSet{0}{\MatsubaraFrequencyCutoff}}$}
\end{subfigure}
\begin{subfigure}{0.5\textwidth}
\includegraphics[width=\textwidth]{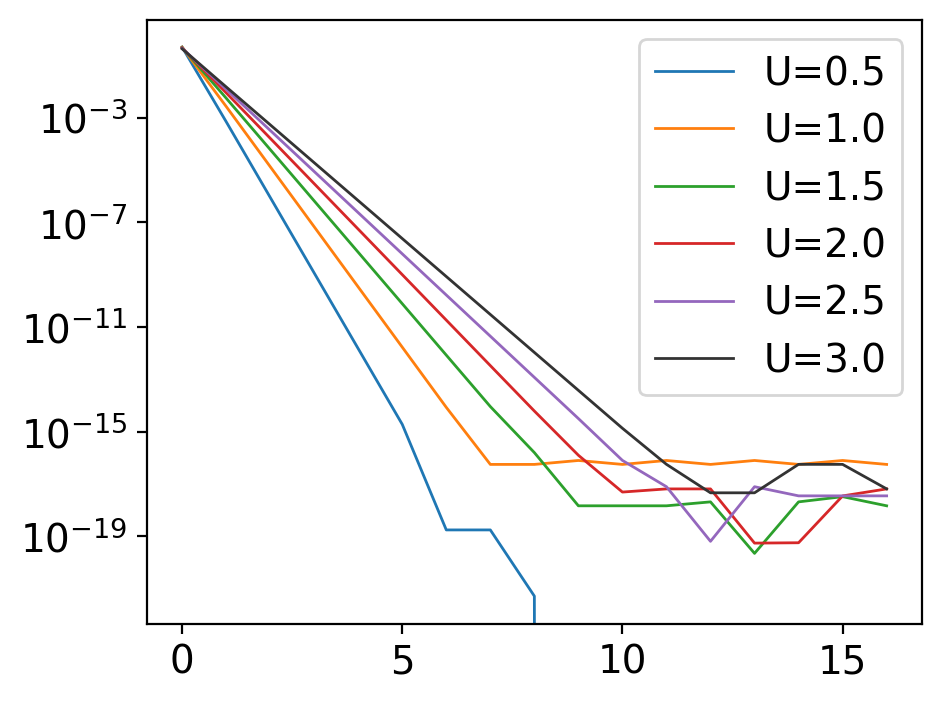}
\includegraphics[width=\textwidth]{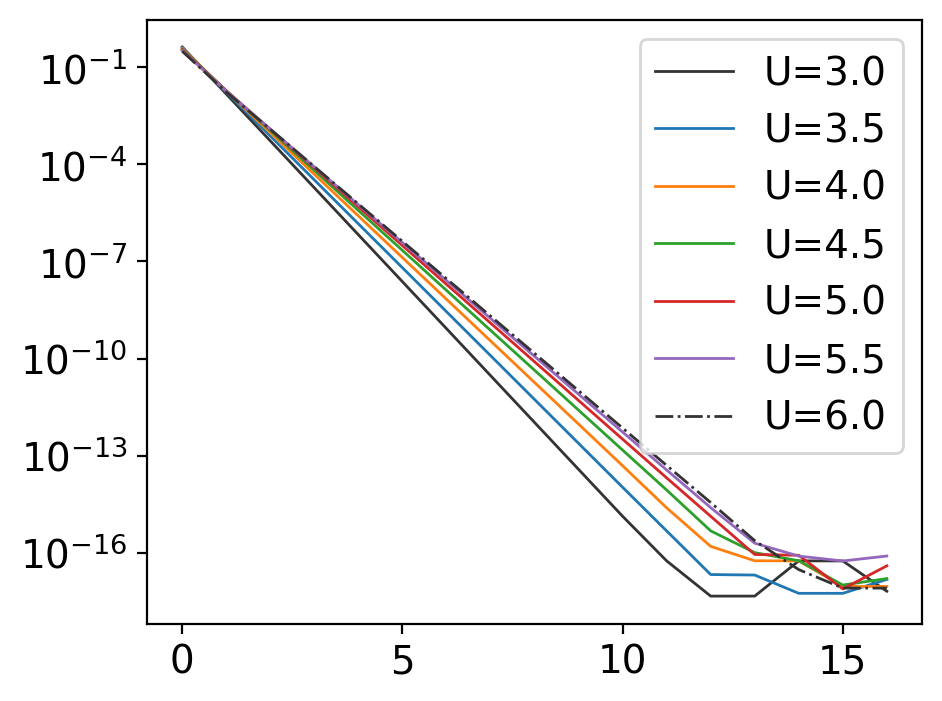}
\includegraphics[width=\textwidth]{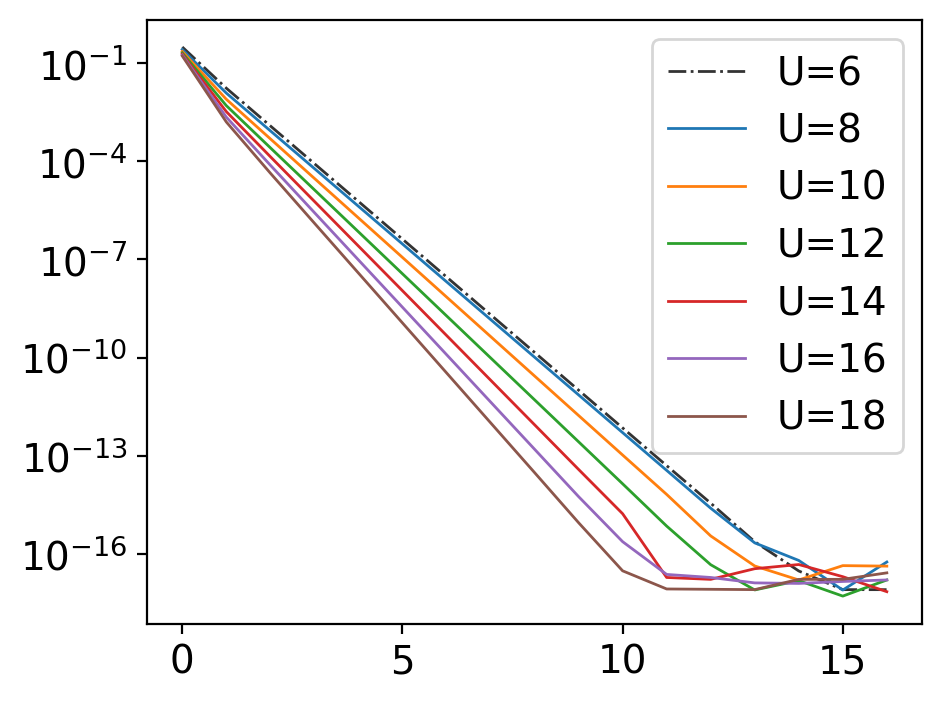}
\subcaption{Initial guess $\Hybridization^{(0)}=0$}
\end{subfigure}
\caption{Linear convergence of the fixed-point algorithm. The residual $\Norm{\Hybridization^{(n+1)}-\Hybridization^{(n)}}_2$ is plotted in log scale, as a function of the iteration $n \in \IntSubSet{0}{N_{\mathrm{iter}}}$. Left and right sides differ in the initial guess $\Hybridization^{(0)}$.}
\label{fig:ConvergenceResidualHighT}
\end{figure}

\begin{figure}
    \centering
    \begin{subfigure}{0.49\textwidth}
        \centering
        \includegraphics[height=6cm]{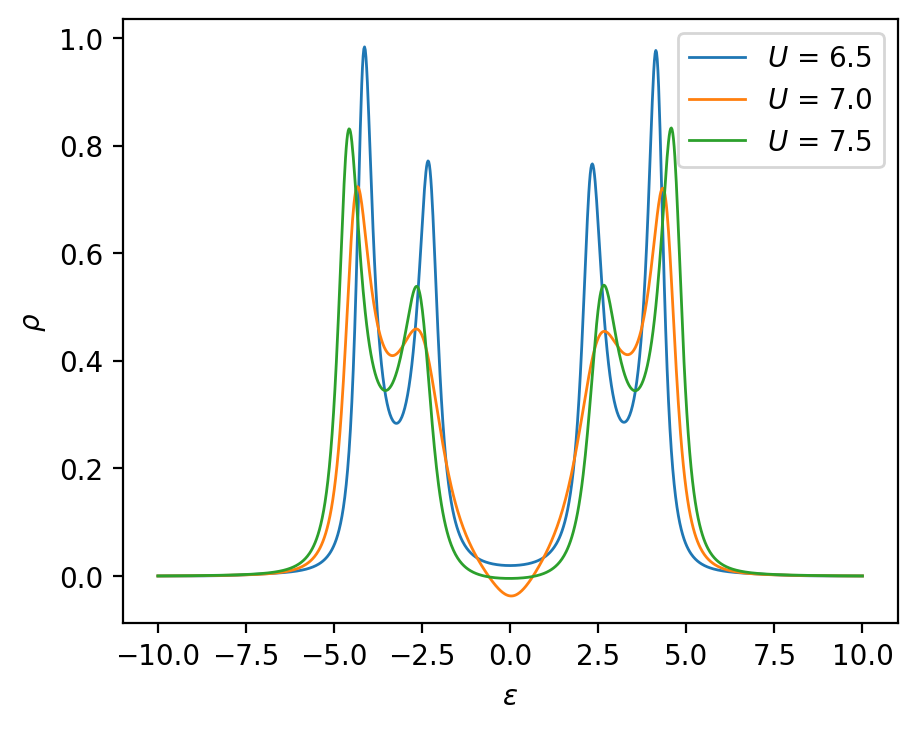}
    \end{subfigure}
    \begin{subfigure}{0.49\textwidth}
        \centering
        \includegraphics[height=6cm]{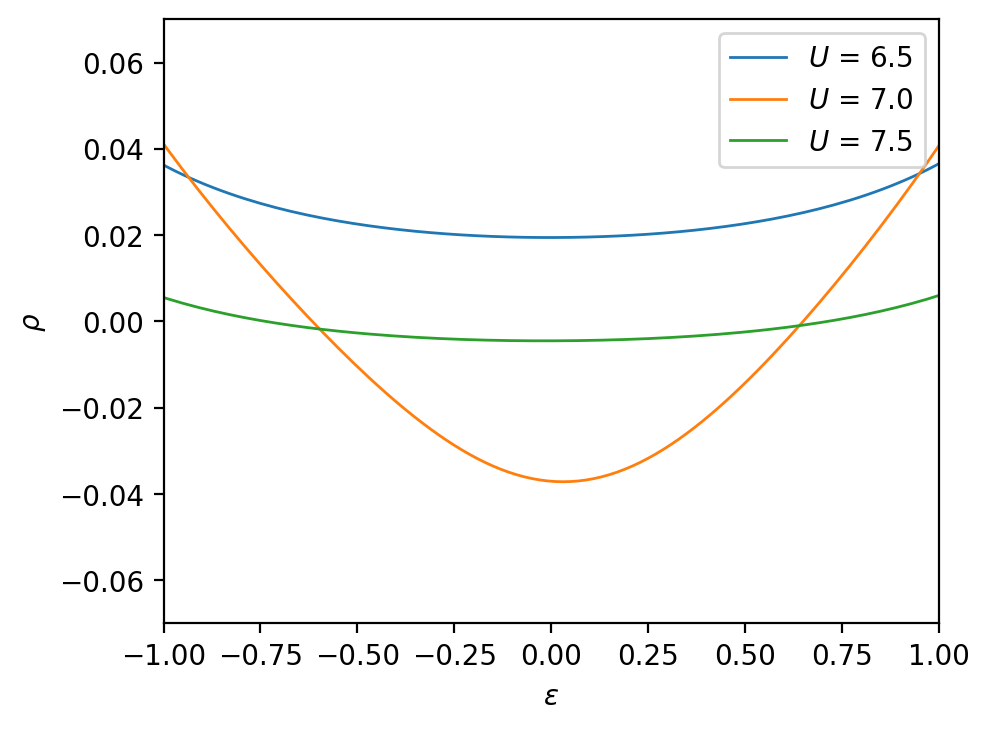}
    \end{subfigure}
    \caption{Density $\rho$ of the spectral function $A$, for different values of the on-site repulsion $U$ (focus around $0$ on the right). For $\OnSiteRepulsion[]=7$ and $\OnSiteRepulsion[]=7.5$, the density $\rho$ takes negatives values around 0.}
    \label{fig:NegativeSpectralFunction}
\end{figure}

Further simulations reported in Figure \ref{fig:NegativeSpectralFunction} indicate that, for some values of $U$ (e.g. $U=7$), \texttt{set\_from\_pade} returns negative values of $\rho(0)$. This observation holds true for larger values of $\MatsubaraFrequencyCutoff$, and is not satisfactory for an approximate solution to \eqref{eq:NumericalInterpolationProblem}. Such an issue may originate from the analytic continuation method we consider, but also from the proper analytic continuation problem defined by the discretized Green's function $\mathbf{\GreensFunction}$. In order to find the origin of this problem, we test numerically the existence of a solution to the Nevanlinna-Pick problem \eqref{eq:NumericalInterpolationProblem} in the next section.

\subsection{Pick criterion on the converged solution}
\label{sec:PickCriterion}

In this section, we are interested in the existence of a solution to the Nevanlinna-Pick problem set in~\eqref{eq:NumericalInterpolationProblem}. Two criteria are available for determining whether such a problem has a solution, and whether this solution is unique (see \cite[Appendix A]{cances_mathematical_2024} and references therein). In this article, we focus on the Pick criterion, which appears to be more robust that the Nevanlinna criterion with respect to small numerical errors.

As explained in details in \cite{nicolau_nevanlinna-pick_2015}, the Nevanlinna-Pick interpolation problem \eqref{eq:NumericalInterpolationProblem} admits a solution if and only if the Pick matrix $P \in \mathcal{S}_{\MatsubaraFrequencyCutoff+1}\left(\ComplexNumbers\right)$ given by
\begin{equation}\label{eq:PickMatrix}
    P_{i,j}=\frac{1-\overline{W(-G_i)}W(-G_j)}{1-\overline{W(i\MatsubaraFrequency_i)}W(i\MatsubaraFrequency_j)}, \text{ with the Cayley transform } W:\left \{ \begin{matrix}
        &\overline{\UpperHalfPlane} \to \UnitDisc=\{z \in \ComplexNumbers, \Modulus{z} \leq 1 \}\\
         &z \mapsto (z-i)/(z+i)
    \end{matrix} \right.,
\end{equation}
is positive semi-definite. The solution is unique if $0$ is an eigenvalue of this matrix, in which case the solution is rational with real poles \cite{donoghue1974interpolation,cances_mathematical_2024}. Testing numerically the existence (and uniqueness) of a solution using the Pick criterion therefore amounts to diagonalize the Pick matrix and to assess the sign of its eigenvalues.

As briefly mentioned in the previous section, the \texttt{TRIQS} library has been developed to run simulations for a large number of Matsubara frequencies. We find in practice that, for $\MatsubaraFrequencyCutoff \leq 50$, the library does not manage to perform the \texttt{Fourier} method, which is used to compute \eqref{eq:def_FNomegabeta}.
Moreover, the latter method uses tail-fitting, which makes the \texttt{TRIQS} implementation of the method slighty different from the one analyzed in Theorem \ref{thm:LocalUniquenessMFDiscretized}.
In addition, the \texttt{TRIQS} library is limited to double-precision computations. In this Section, we are interested in assessing numerically the sign of the lowest eigenvalue of a matrix: to make sure that numerical errors due to the diagonalization do not prevail, we need to use arbitrary-precision finite-arithmetic computations.

For all these reasons, we have used for the rest of Section \ref{sec:IterativeSchemeMottTransiton} our own implementation in \texttt{Julia} of the simple fixed-point algorithm described in \eqref{eq:def_FNomegabeta}.
We use the \texttt{BigFloat} package to handle arbitrary precision. In comparison with the \texttt{TRIQS} implementation, we do not compute numerically the integral defined in \eqref{eq:def_FNomegabeta}: instead, we make use of the alternative expression given in \eqref{eq:NondimensionalIPTOutOfDefinition}, and compute $F_{\MatsubaraFrequencyCutoff}(\HybridizationVector)$ directly. Doing so, our implementation is as close as possible to the simple fixed-point algorithm studied in this article, at the price of greatly reduced performance in terms of computing times.

We proceed as follows:
\begin{enumerate}
    \item \textbf{Solve the MaF-discretized IPT-DMFT equation}. The first step is the same as previously: we implement the simple iterative scheme described in Theorem \ref{thm:LocalUniquenessMFDiscretized}, namely, for a given $N_\mathrm{iter} \in \Integers^{*}$, we compute the sequence $\left(\HybridizationVector^{(k)}\right)_{k \in \IntSubSet{0}{N_{\mathrm{iter}}}}$ by
    \begin{equation}
        \HybridizationVector^{(0)} \in \HybridizationSpaceDiscretized, \quad \forall k \in \Integers, \quad \HybridizationVector^{(k+1)}= \DMFTmapDiscretized(\HybridizationVector^{(k)}).
    \end{equation}
    This time, the number of iterations $N_\mathrm{iter}$ has to be chosen to ensure that for the considered set of parameters $\HoppingMatrix[],\OnSiteRepulsion[],\StatisticalTemperature$, $\HybridizationVector^{(N_\mathrm{iter})} \approx \HybridizationVector^{(\infty)}$ up to a precision that allows us to compute the sign of the lowest eigenvalue of the associated Pick matrix. Our Julia implementation is set to tackle this issue, that is, the precision is chosen so that the numerical error due to the diagonalization of the Pick matrix does not prevent the sign of its smallest eigenvalue from being known exactly.
    
    \item \textbf{Compute the MaF-discretized IPT-DMFT local Green's function $\mathbf{\GreensFunction}$}. As in the previous section, we extract the associated self-energy $\SelfEnergyVector^{(\infty)}= U^2 \IPTmap_{\StatisticalTemperature,\MatsubaraFrequencyCutoff}(\HybridizationVector^{(\infty)})$ and an approximation $\mathbf{G} \in \LowerHalfPlaneVectors$ of the discretization of the local Green's function $G_{1,1}$, defined by
    \begin{equation}\label{eq:LocalGreensFunctionApproximation}
        \forall n \in \IntSubSet{0}{\MatsubaraFrequencyCutoff},  \quad G_n=\left(i\MatsubaraFrequency_n-\Hybridization_n^{(\infty)}-\SelfEnergy_n^{(\infty)}\right)^{-1}.
    \end{equation}
    \item \textbf{Check the Pick criterion.} Define the Pick matrices $P(\Hybridization), P(\SelfEnergy), P(\GreensFunction)$ respectively associated to $-\HybridizationVector,-\SelfEnergyVector$ and $-\mathbf{G}$, using formula \eqref{eq:PickMatrix}. Compute the lowest eigenvalue $\min(\sigma(P(\cdot)))$ of the Pick matrix. If it is negative, then there is no solution to the associated Nevanlinna-Pick interpolation problem.
\end{enumerate}

We report the results of these simulations in Figure \ref{fig:PickCriterion}.  The plots correspond to two different values of $\MatsubaraFrequencyCutoff$. $\MatsubaraFrequencyCutoff=2$ is the lowest value such that there is a range of parameters $(T,U)$ for which the Pick criterion is not satisfied. Note that, because of the structure of \eqref{eq:LocalGreensFunctionApproximation}, if the interpolation problems associated to $\HybridizationVector$ and $\SelfEnergyVector$ have solutions, then so does the problem for $\mathbf{G}$, namely \eqref{eq:NumericalInterpolationProblem}. On the other hand, the converse is not true: as highlighted in Figure \ref{fig:PickCriterion}, it is possible to have a solution for $\mathbf{G}$ and $\HybridizationVector$, but not for $\SelfEnergyVector$.

The plots cover the range or parameters $(T,U)\in[0,10]^2$. The colormap corresponds to the value of $\min(\sigma(P(\cdot)))$, the lowest eigenvalue of the Pick matrix. For a range of parameters ($T$ around 3 for $\HybridizationVector$ and $\mathbf{G}$, $T$ around 5 for $\SelfEnergyVector$, and $U$ around 10), it is clear that the lowest eigenvalue of the Pick matrix is negative. The Pick matrix is not positive semidefinite in this case, hence there is no solution to the Nevanlinna-Pick interpolation problem.

\begin{remark}
    Even if it is not clear on Figure \ref{fig:PickCriterion}, it is numerically observed that when $\MatsubaraFrequencyCutoff$ increases, the range of parameters for which the Pick criterion is satisfied expands. We conjecture that for fixed $(T,U)$, in the limit $\MatsubaraFrequencyCutoff\to\infty$, the Pick criterion is satisfied for $\HybridizationVector$ and $\mathbf{G}$ and that the solutions to the interpolation problem converge to the solutions of the continuous problem exposed in \cite{cances_mathematical_2024}.
\end{remark}

\begin{figure}
    \centering
    \begin{subfigure}{0.49\textwidth}
        \centering
        \includegraphics[width=.9\linewidth]{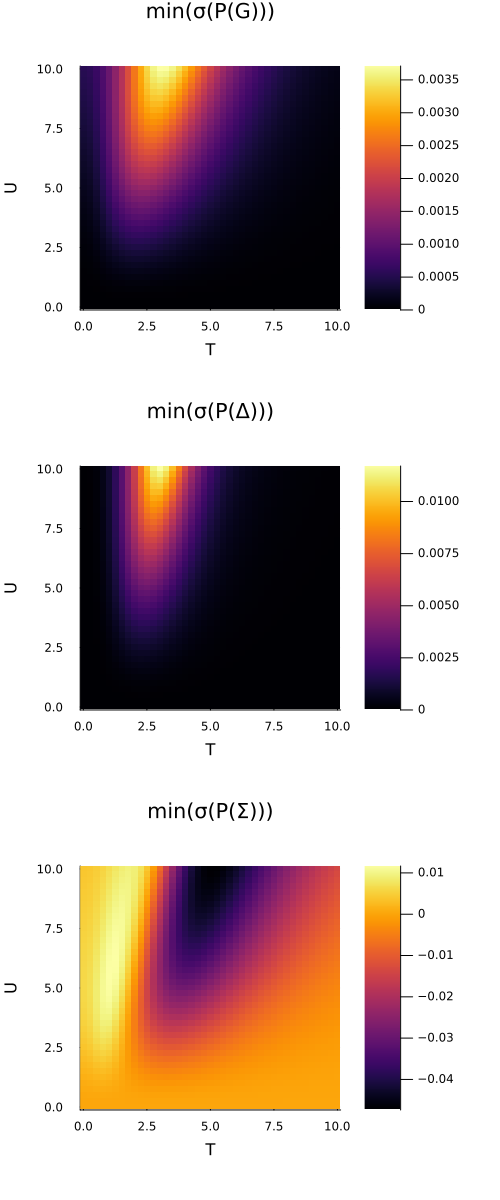}    
        \caption{$\MatsubaraFrequencyCutoff=2$}
    \end{subfigure}
    \begin{subfigure}{0.49\textwidth}
        \centering
        \includegraphics[width=.9\linewidth]{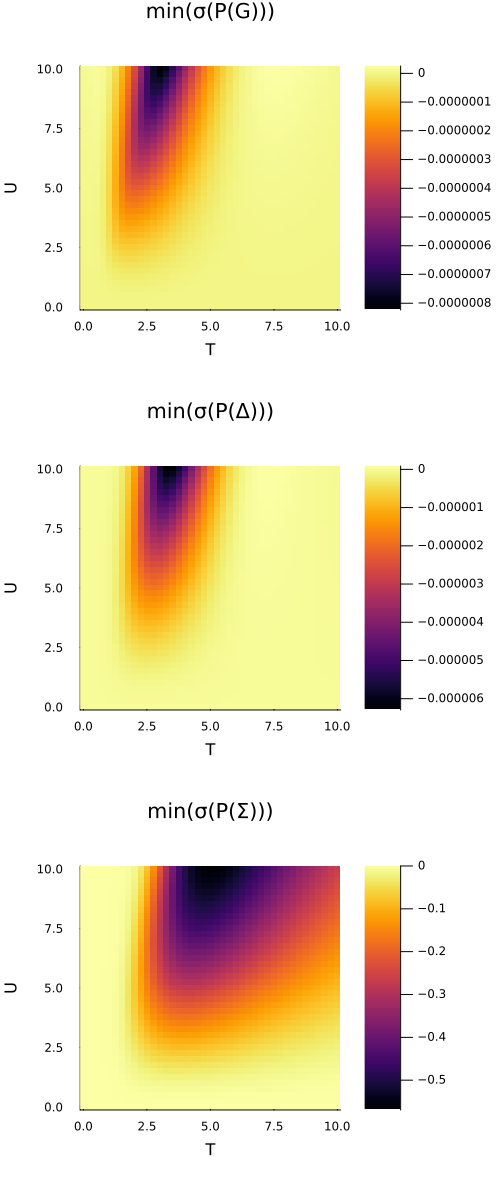}
        \caption{$\MatsubaraFrequencyCutoff=5$}
    \end{subfigure}
    \caption{Test of the Pick criterion (lowest eigenvalue of the Pick matrix) of a solution to the IPT-DMFT equation, for the Nevanlinna-Pick criterion computed for $-\mathbf{G}$ (top), $-\HybridizationVector$ (middle), $-\SelfEnergyVector$ (bottom), for two values of $\MatsubaraFrequencyCutoff$ (left: $\MatsubaraFrequencyCutoff=2$, right: $\MatsubaraFrequencyCutoff=5$) at fixed $\StatisticalTemperature=1$. For a range of parameters $(T,U)$, the solution to the MaF-discretized IPT-DMFT equations leads to analytic interpolation problems that do not have any solution.}
    \label{fig:PickCriterion}
\end{figure}

\newpage

\section*{Acknowledgements} This project has received funding from the Simons Targeted Grant Award No. 896630 and from the European Research Council (ERC) under the European Union's Horizon 2020 research and innovation programme (grant agreement EMC2 No 810367). The authors thank Fabian Faulstich, Michel Ferrero, Antoine Georges, David Gontier, and Fatemeh Mohammadi for useful discussions.

\bibliographystyle{plain}
\bibliography{citations}

\begin{thebibliography}{10}

\bibitem{bravyi_complexity_2017}
Sergey Bravyi and David Gosset.
\newblock Complexity of quantum impurity problems.
\newblock {\em Communications in Mathematical Physics}, 356(2):451--500,
  December 2017.
\newblock arXiv:1609.00735 [cond-mat, physics:math-ph, physics:quant-ph].

\bibitem{HomotopyContinuation.jl}
Paul Breiding and Sascha Timme.
\newblock {H}omotopy{C}ontinuation.jl: {A} {P}ackage for {H}omotopy
  {C}ontinuation in {J}ulia.
\newblock In {\em International Congress on Mathematical Software}, pages
  458--465. Springer, 2018.

\bibitem{brouwer2011spectra}
Andries~E Brouwer and Willem~H Haemers.
\newblock {\em Spectra of graphs}.
\newblock Springer Science \& Business Media, 2011.

\bibitem{cances_mathematical_2016}
Eric Cancès, David Gontier, and Gabriel Stoltz.
\newblock A mathematical analysis of the {GW0} method for computing electronic
  excited energies of molecules.
\newblock {\em Reviews in Mathematical Physics}, 28(04):1650008, 2016.

\bibitem{cances_mathematical_2024}
Éric Cancès, Alfred Kirsch, and Solal Perrin-Roussel.
\newblock A mathematical analysis of {IPT}-{DMFT}, June 2024.
\newblock arXiv:2406.03384.

\bibitem{donoghue1974interpolation}
William~F Donoghue and William~F Donoghue.
\newblock Interpolation by pick functions.
\newblock {\em Monotone Matrix Functions and Analytic Continuation}, pages
  117--127, 1974.

\bibitem{fei_nevanlinna_2021}
Jiani Fei, Chia-Nan Yeh, and Emanuel Gull.
\newblock Nevanlinna {Analytical} {Continuation}.
\newblock {\em Physical Review Letters}, 126(5):056402, February 2021.

\bibitem{fei_analytical_2021}
Jiani Fei, Chia-Nan Yeh, Dominika Zgid, and Emanuel Gull.
\newblock Analytical continuation of matrix-valued functions: {Carathéodory}
  formalism.
\newblock {\em Physical Review B}, 104(16):165111, 2021.

\bibitem{georges_hubbard_1992}
Antoine Georges and Gabriel Kotliar.
\newblock Hubbard model in infinite dimensions.
\newblock {\em Physical Review B}, 45(12):6479--6483, March 1992.

\bibitem{georges_dynamical_1996}
Antoine Georges, Gabriel Kotliar, Werner Krauth, and Marcelo~J. Rozenberg.
\newblock Dynamical mean-field theory of strongly correlated fermion systems
  and the limit of infinite dimensions.
\newblock {\em Reviews of Modern Physics}, 68(1):13--125, January 1996.

\bibitem{gesztesy_matrixvalued_2000}
Fritz Gesztesy and Eduard Tsekanovskii.
\newblock On {Matrix}–{Valued} {Herglotz} {Functions}.
\newblock {\em Mathematische Nachrichten}, 218(1):61--138, 2000.

\bibitem{M2}
Daniel~R. Grayson and Michael~E. Stillman.
\newblock Macaulay2, a software system for research in algebraic geometry.
\newblock Available at \url{http://www2.macaulay2.com}.

\bibitem{gubernatis_quantum_1991}
J.~E. Gubernatis, Mark Jarrell, R.~N. Silver, and D.~S. Sivia.
\newblock Quantum {Monte} {Carlo} simulations and maximum entropy: {Dynamics}
  from imaginary-time data.
\newblock {\em Physical Review B}, 44(12):6011--6029, 1991.

\bibitem{gull_continuous-time_2011}
Emanuel Gull, Andrew~J. Millis, Alexander~I. Lichtenstein, Alexey~N. Rubtsov,
  Matthias Troyer, and Philipp Werner.
\newblock Continuous-time {Monte} {Carlo} methods for quantum impurity models.
\newblock {\em Reviews of Modern Physics}, 83(2):349--404, 2011.

\bibitem{gutzwiller_effect_1963}
Martin~C. Gutzwiller.
\newblock Effect of {Correlation} on the {Ferromagnetism} of {Transition}
  {Metals}.
\newblock {\em Physical Review Letters}, 10(5):159--162, March 1963.

\bibitem{held_mott-hubbard_2001}
Karsten Held, Georg Keller, Volker Eyert, Dieter Vollhardt, and Vladimir~I
  Anisimov.
\newblock Mott-{Hubbard} {Metal}-{Insulator} {Transition} in {Paramagnetic}
  {V}\$\_2\$ {O}\$\_3\$: {An} {LDA}+{DMFT} ({QMC}) {Study}.
\newblock {\em Physical review letters}, 86(23):5345, 2001.

\bibitem{huang_robust_2023}
Zhen Huang, Emanuel Gull, and Lin Lin.
\newblock Robust analytic continuation of {Green}'s functions via projection,
  pole estimation, and semidefinite relaxation.
\newblock {\em Physical Review B}, 107(7):075151, February 2023.
\newblock arXiv:2210.04187 [cond-mat, physics:physics].

\bibitem{hubbard_electron_1963}
John Hubbard.
\newblock Electron correlations in narrow energy bands.
\newblock {\em Proceedings of the Royal Society of London. Series A.
  Mathematical and Physical Sciences}, 276(1365):238--257, 1963.

\bibitem{jarrell_bayesian_1996}
Mark Jarrell and J.~E. Gubernatis.
\newblock Bayesian inference and the analytic continuation of imaginary-time
  quantum {Monte} {Carlo} data.
\newblock {\em Physics Reports}, 269(3):133--195, May 1996.

\bibitem{kanamori_electron_1963}
Junjiro Kanamori.
\newblock Electron {Correlation} and {Ferromagnetism} of {Transition} {Metals}.
\newblock {\em Progress of Theoretical Physics}, 30(3):275--289, 1963.

\bibitem{li_interaction-expansion_2022}
Jia Li, Yang Yu, Emanuel Gull, and Guy Cohen.
\newblock Interaction-expansion inchworm {Monte} {Carlo} solver for lattice and
  impurity models.
\newblock {\em Physical Review B}, 105(16):165133, 2022.

\bibitem{lieb_hubbard_2004}
Elliott~H. Lieb.
\newblock The {Hubbard} model: {Some} {Rigorous} {Results} and {Open}
  {Problems}.
\newblock In {\em Condensed {Matter} {Physics} and {Exactly} {Soluble}
  {Models}: {Selecta} of {Elliott} {H}. {Lieb}}. Berlin, Heidelberg, 2004.

\bibitem{lin_sparsity_2020}
Lin Lin and Michael Lindsey.
\newblock Sparsity {Pattern} of the {Self}-energy for {Classical} and {Quantum}
  {Impurity} {Problems}.
\newblock {\em Annales Henri Poincaré}, 21(7):2219--2257, 2020.

\bibitem{martin_interacting_2016}
Richard~M. Martin, Lucia Reining, and David~M. Ceperley.
\newblock {\em Interacting {Electrons}: {Theory} and {Computational}
  {Approaches}}.
\newblock Cambridge University Press, Cambridge, 2016.

\bibitem{metzner_correlated_1989}
Walter Metzner and Dieter Vollhardt.
\newblock Correlated {Lattice} {Fermions} in d = \${\textbackslash}infty\$
  {Dimensions}.
\newblock {\em Physical Review Letters}, 62(3):324--327, January 1989.

\bibitem{nevanlinna_uber_1919}
Rolf Nevanlinna.
\newblock Uber beschrankte {Funktionen}, die in gegeben {Punkten}
  vorgeschrieben {Werte} annehmen.
\newblock {\em Ann. Acad. Sci. Fenn. Ser. A 1 Mat. Dissertationes}, 1919.

\bibitem{nicolau_interpolating_1994}
Artur Nicolau.
\newblock Interpolating {Blaschke} products solving {Pick}-{Nevanlinna}
  problems.
\newblock {\em Journal d’Analyse Mathematique}, 62(1):199--224, December
  1994.

\bibitem{nicolau_nevanlinna-pick_2015}
Artur Nicolau.
\newblock The {Nevanlinna}-{Pick} {Interpolation} {Problem}.
\newblock 2015.

\bibitem{parcollet_triqs_2015}
Olivier Parcollet, Michel Ferrero, Thomas Ayral, Hartmut Hafermann, Igor
  Krivenko, Laura Messio, and Priyanka Seth.
\newblock {TRIQS}: {A} toolbox for research on interacting quantum systems.
\newblock {\em Computer Physics Communications}, 196:398--415, 2015.

\bibitem{pariser_semiempirical_1953}
Rudolph Pariser and Robert~G. Parr.
\newblock A {Semi}‐{Empirical} {Theory} of the {Electronic} {Spectra} and
  {Electronic} {Structure} of {Complex} {Unsaturated} {Molecules}. {II}.
\newblock {\em The Journal of Chemical Physics}, 21(5):767--776, 1953.

\bibitem{pick_uber_1915}
Georg Pick.
\newblock Über die {Beschränkungen} analytischer {Funktionen}, welche durch
  vorgegebene {Funktionswerte} bewirkt werden.
\newblock {\em Mathematische Annalen}, 77(1):7--23, March 1915.

\bibitem{pople_electron_1953}
John~A. Pople.
\newblock Electron interaction in unsaturated hydrocarbons.
\newblock {\em Transactions of the Faraday Society}, 49(0):1375--1385, 1953.

\bibitem{rubtsov_continuous-time_2005}
A.~N. Rubtsov, V.~V. Savkin, and A.~I. Lichtenstein.
\newblock Continuous-time quantum {Monte} {Carlo} method for fermions.
\newblock {\em Physical Review B}, 72(3):035122, July 2005.

\bibitem{seth_triqscthyb_2016}
Priyanka Seth, Igor Krivenko, Michel Ferrero, and Olivier Parcollet.
\newblock {TRIQS}/{CTHYB}: {A} continuous-time quantum {Monte} {Carlo}
  hybridisation expansion solver for quantum impurity problems.
\newblock {\em Computer Physics Communications}, 200:274--284, 2016.

\bibitem{shapiro_fixed-point_2016}
Joel~H. Shapiro.
\newblock {\em A {Fixed}-{Point} {Farrago}}.
\newblock Universitext. Springer International Publishing, Cham, 2016.

\bibitem{sun_extended_2002}
Ping Sun and Gabriel Kotliar.
\newblock Extended dynamical mean-field theory and {GW} method.
\newblock {\em Physical Review B}, 66(8):085120, 2002.

\bibitem{tan_doping_2023}
Yuting Tan, Pak Ki~Henry Tsang, Vladimir Dobrosavljević, and Louk Rademaker.
\newblock Doping a {Wigner}-{Mott} insulator: {Exotic} charge orders in
  transition metal dichalcogenide moiré heterobilayers.
\newblock {\em Physical Review Research}, 5(4):043190, 2023.

\bibitem{werner_continuous-time_2006}
Philipp Werner, Armin Comanac, Luca de’ Medici, Matthias Troyer, and
  Andrew~J. Millis.
\newblock Continuous-{Time} {Solver} for {Quantum} {Impurity} {Models}.
\newblock {\em Physical Review Letters}, 97(7):076405, 2006.

\bibitem{werner_hybridization_2006}
Philipp Werner and Andrew~J. Millis.
\newblock Hybridization expansion impurity solver: {General} formulation and
  application to {Kondo} lattice and two-orbital models.
\newblock {\em Physical Review B}, 74(15):155107, 2006.

\bibitem{zhang_mott_1993}
X.~Y. Zhang, M.~J. Rozenberg, and G.~Kotliar.
\newblock Mott transition in the d ={\textbackslash}infty {Hubbard} model at
  zero temperature.
\newblock {\em Physical Review Letters}, 70(11):1666--1669, March 1993.

\end{thebibliography}

\end{document}